\documentclass[12pt,centertags,oneside, reqno]{amsart}
\usepackage{ifpdf}
%%%%%%%%%%%%%%%%%%%%
%\usepackage{comment}
%%%%%%%%%%%%%%%%%%%%

\usepackage{amssymb}
\usepackage{braket}
\usepackage[all]{xy}
\usepackage{cancel}

\usepackage{amscd,amsxtra,calc}
\usepackage{cmmib57}
\usepackage{url}
\usepackage{ulem}
\usepackage{xcolor}
\setcounter{MaxMatrixCols}{25}

\usepackage[a4paper,width=16.2cm,top=3cm,bottom=3cm]{geometry}

\numberwithin{equation}{section}

\setcounter{MaxMatrixCols}{25}

%%%%%%%%%%%%%%%%%%%%%%%%%%%%%%%%%%%%%%%%%%%%%

%%%%%%%%%%%%%%%%%%%% Text italic %%%%%%%%%%%%%%%%%%%%%%%%%%%%
\theoremstyle{plain}
\newtheorem{thm}{Theorem}[section]
\newtheorem{theorem}[thm]{Theorem}

\newtheorem{lemma}[thm]{Lemma}
\newtheorem{corollary}[thm]{Corollary}
\newtheorem{proposition}[thm]{Proposition}
%%%%%%%%%%%%%%%%%%%% Text roman %%%%%%%%%%%%%%%%%%%%%%%%%%%%%
\theoremstyle{definition}
\newtheorem{remark}[thm]{Remark}

\newtheorem{definition}[thm]{Definition}
\newtheorem{claim}[thm]{Claim}

\newtheorem{question}[thm]{Question}

\numberwithin{equation}{section}
%%%%%%%% Special symbols %%%%%%%%%%%%%%%%%%%%%%%%%%%%%%%
% Skriptbuchstaben

%%%%%%%%%%%%%%%%%%%%%%%%%%%%%%%%%%%%%%%%%%%%%%%%%%%%%%%%%%%%%%

%\newcommand{\red}{{\rm red}}

\newcommand{\rank}{{\rm rank}}

% Skriptbuchstaben

\newcommand{\sG}{{\mathcal G}}
\newcommand{\sH}{{\mathcal H}}

\newcommand{\sS}{{\mathcal S}}

% Sonderbuchstaben mit Doppellinie

\newcommand{\C}{{\mathbb C}}

\renewcommand{\P}{{\mathbb P}}
\newcommand{\Q}{{\mathbb Q}}
\newcommand{\R}{{\mathbb R}}

\newcommand{\Z}{{\mathbb Z}}

\newcommand{\id}{{\rm id\hspace{.1ex}}}

%%%%%%%%%%%%%%%%%%%%%%%%%%%%%%%%%%%%%%%%%%%%%%%%%%%%%%%%%%%%%%

\title[Hyperbolic automorphism groups]{On K3 surfaces with hyperbolic automorphism groups} 

\author{Koji Fujiwara, Keiji Oguiso, Xun Yu}

\address{Okinawa Institute of Science and Technology Graduate University, Okinawa 904-0495, Japan; KUIAS and Research Institute for Mathematical Sciences, Kyoto University, Kyoto, Japan}
\email{koji.fujiwara@oist.jp}

\address{Mathematical Sciences, the University of Tokyo, Meguro Komaba 3-8-1, 
Tokyo, Japan, and National Center for Theoretical Sciences, 
Mathematics Division, National Taiwan University, 
Taipei, Taiwan}
\email{oguiso@ms.u-tokyo.ac.jp}

\address{Center for Applied Mathematics and KL-AAGDM, Tianjin University, Weijin Road 92, Tianjin 300072, P.R. China}
\email{xunyu@tju.edu.cn}

\thanks{Koji Fujiwara is partially supported by JSPS Grant-in-Aid (A) 25H00588. Keiji Oguiso is partially supported by JSPS Grant-in-Aid (A) 25H00587, 25K21992 and NCTS scholar program. Xun Yu is partially supported by NSFC (No. 12071337).}
\dedicatory{}

\subjclass[2010]{Primary 14J28; Secondary 14J50, 14J27}

\begin{document}

\maketitle

\begin{abstract}
We show the finiteness of the N\'eron-Severi lattices of complex projective K3 surfaces whose automorphism groups are non-elementary hyperbolic with explicit descriptions, under the assumption that the Picard number $\ge 6$ which is optimal to ensure the finiteness. Our proof of finiteness is based on the study of genus one fibrations on K3 surfaces and recent work of Kikuta and Takatsu. 
\end{abstract} 

\section{Introduction}\label{sect0}

Since the global Torelli theorem for K3 surfaces by Piatetskii-Shapiro and Shafarevich \cite{PS71}, the automorphism groups of K3 surfaces have caught much attention from several aspects by several mathematicians. In this paper, inspired by a pioneering work by Kikuta \cite{Ki24}, and independently by Takatsu \cite{Ta24}, we address a new study of the automorphism group of a K3 surface from hyperbolicity in geometric group theory, with an unexpected relation with genus one fibrations on a K3 surface and the entropy (Theorems \ref{main13}, \ref{main}, \ref{main18}).

\medskip

Unless stated otherwise, by a {\it K3 surface}, we mean a smooth projective complex surface with no global $1$-form, i.e., $H^0(S, \Omega_S^1) = 0$, and with a nowhere vanishing global holomorphic $2$-form $\omega_S$, which is necessarily unique up to $\C^{\times}$. Let
$${\rm NS}\,(S) := {\rm Im}\, (c_1 : {\rm Pic}\,(S) \to H^2(S, \Z)) \simeq {\rm Pic}\,(S).$$
Then ${\rm NS}\, (S)$ is a free $\Z$-module of finite rank with the intersection form $(*, **)_S$, or equivalently, the integral symmetric bilinear form induced from the cup product of $H^2(S, \Z)$. We call ${\rm NS}\, (S) := ({\rm NS}\, (S), (*,**)_S)$ the {\it N\'eron-Severi lattice} of $S$ and $\rho\, (S) := {\rm rank}({\rm NS}\, (S))$ the {\it Picard number} of $S$. Then $1 \le \rho\, (S) \le 20$ and ${\rm NS}\, (S)$ is an even hyperbolic lattice. Here and hereafter we mean, by a {\it lattice}, a pair $L = (L, (*,**))$ of free $\Z$-module $L$ of finite rank with a symmetric bilinear form $(*, **) : L \times L \to \Z$. A lattice $L = (L, (*,**))$ of rank $r$ is called {\it even} (resp. {\it hyperbolic}) if $(v^2) \in 2\Z$ for all $v \in L$ (resp. $r \ge 2$ and $(*,**)$ is of signature $(1, r -1)$). We define an isomorphism of lattices and a sublattice of a lattice in natural manners. We denote the automorphism group of a lattice $L $ by ${\rm O}\, (L)$.

\medskip 

Gromov introduced the notion of hyperbolicity for a geodesic
metric space in his seminal paper, \cite{Gr87}. 
We call a group $G$ with a finite set of generators $S$ {\it hyperbolic} if the Cayley graph of $(G, S)$, which is considered as a metric space by the graph metric, is a hyperbolic space in the sense of Gromov. The hyperbolicity of $G$ does not depend on the choice of $S$. We call a hyperbolic group {\it non-elementary} if $G$ is not virtually cyclic, i.e., there is no cyclic subgroup $G'$, with $[G:G'] < \infty$. 
A virtually cyclic group is hyperbolic, and it is called {\it elementary}. Standard examples of hyperbolic groups include free groups of finite rank,
the fundamental groups of closed Riemannian manifolds of negative sectional
curvature, such as closed hyperbolic manifolds and closed orientable 
surfaces of genus at least two. Additionally, not all but some of Coxeter groups and Artin groups are hyperbolic. 

\medskip

Hyperbolic groups have been a central object in geometric group theory, as they provide a unified framework for studying a broad class of finitely generated groups. We recall some basic properties of hyperbolic groups. A hyperbolic
group does not contain a subgroup isomorphic to $\Z^2$.
More generally, any finitely generated solvable subgroup in a hyperbolic 
group is virtually cyclic. 
A hyperbolic group is always finitely presented. For finitely presented groups,
one can define
an isoperimetric inequality,  and 
a hyperbolic group satisfies a linear isoperimetric inequality.
This is one of the characterizations of hyperbolic groups, though we will not use this notion and this important fact in this paper. (See \cite[Chapter III, H2]{BH99}. For instance, the definition is Definition 2.1 and the equivalence follows from Proposition 2.7 and Theorem 2.9 there.)

\medskip

Max Dehn formulated three central  problems in combinatorial group theory in 1911:
the word problem, the conjugacy problem, the isomorphism problem. They are 
all solvable for hyperbolic groups in general. A hyperbolic group satisfies Tits alternative, i.e., for any finitely generated subgroup
$H$ 
in a hyperbolic group, either $H$ is virtually cyclic, or $H$ contains 
the free group of rank two, i.e., $\Z * \Z$. This implies that a non-elementary 
hyperbolic group has exponential growth. A hyperbolic group has finite virtual cohomological dimension if it is
virtually torsion free. 
It is unknown if all hyperbolic groups are virtually torsion free.
It is also unknown if all hyperbolic groups are residually finite.
A hyperbolic group $G$ has only finitely many finite subgroups
up to conjugation in $G$. 

\medskip

 Another characterization of a hyperbolic group 
is that a group is hyperbolic if it admits a properly discontinuous, co-compact
isometric action on a hyperbolic space in the sense of Gromov. 
Along this line, the 
hyperbolicity of a group has been generalized to relatively hyperbolic groups.
This notion plays an important role in this paper.  Many properties of hyperbolic groups have been extended to relatively hyperbolic groups with 
appropriate  modifications. Isometric actions by groups on hyperbolic spaces have been used
to study groups. A notable example is the theory by Masur-Minsky, \cite{MM99},
for the actions of the mapping class group of a compact surface
on its curve graph, which is hyperbolic in the sense of Gromov.
This approach gives a new proof of the solution of conjugacy 
problem of a mapping class group, \cite{MM99},  and 
the finiteness of the asymptotic dimension of a mapping class group, \cite{BBF15}.

\medskip

Let us return back to K3 surfaces. Let ${\rm Aut}\, (S)$ be the group of biregular automorphisms of a K3 surface $S$. We note that ${\rm Aut}\, (S)$ is always discrete, as $T_S \simeq \Omega_S^1$ by $\omega_S$ and thus $H^0(S, T_S) \simeq H^0(S, \Omega_S^1) = 0$. More strongly, ${\rm Aut}\, (S)$ is a finitely generated group by Sterk (\cite[Proposition 2.2]{St85}). We define
$${\rm Aut}\, (S)^* := {\rm Im}\, ({\rm Aut}\, (S) \to {\rm O}\, ({\rm NS}\, (S))\,\, ;\,\, f \mapsto f^*).$$

\medskip

The aim of this paper is to study K3 surfaces $S$ with non-elementary hyperbolic ${\rm Aut}\,(S)$ in terms of their N\'eron-Severi lattices ${\rm NS}\, (S)$ and to classify them under the assumption that the Picard number $\rho(S) \ge 6$ (Theorem \ref{main} (3), Theorem \ref{main18}; see also Theorem \ref{main12} (1) for a justification). Here we note that the isomorphism classes of ${\rm NS}\, (S)$ of K3 surfaces $S$ with non-elementary hyperbolic ${\rm Aut}\, (S)$ are no longer finite when $\rho(S) \le 5$ (Remark \ref{rem11}). 

\medskip

Besides an important observation by Kikuta and Takatsu (\cite{Ki24}, \cite{Ta24}), our proof is based on the study of genus one fibrations and their Mordell-Weil groups as well as their Jacobian fibrations (Theorems \ref{main} and \ref{main15}). We briefly recall here some notions needed to state the results (see \cite{CDL25} for details). Let $W$ be a smooth projective complex surface. A surjective morphism $\varphi : W \to C$ onto a smooth projective curve $C$ with connected fibers is called a {\it genus one fibration} if it is relatively minimal and a general fiber of $\varphi$ is a smooth genus one curve, i.e. an elliptic curve without specifying the origin. Any genus one fibration $\varphi : W \to C$ is then the minimal compactification of some torsor of the smooth locus of $j_{\varphi}$ of the unique genus one fibration $j_{\varphi} : W_{\varphi} \to C$ with global section, called the {\it Jacobian fibration} of $\varphi$. If $\varphi : W \to C$ has already a global section, then $W_{\varphi} = W$ and $j_{\varphi} = \varphi$ and by abuse of language, we also call a genus one fibration with global section a Jacobian fibration. When $W = S$ is a K3 surface, we know that $C = \P^1$ and $S_{\varphi}$ is again a K3 surface, which is not isomorphic to $S$ in general (\cite[Theorem 4.3.20 and Proposition 4.3.14]{CDL25}). We also note that a K3 surface $S$ has a genus one fibration (resp. a Jacobian fibration) if and only if ${\rm NS}\, (S)$ represents $0$ (resp. ${\rm NS}\, (S)$ contains $U$ which is the even unimodular hyperbolic lattice of rank $2$); see Proposition \ref{prop21} (\cite[Section 10, $2^\circ$]{Ni81}).

\medskip

Let $\varphi : S \to \P^1$ be a genus one fibration on a K3 surface $S$. Let $f \in {\rm Aut}\, (S)$. We call $f$ a translation automorphism of $\varphi$ if $f$ acts on each smooth fiber $S_t$ ($t \in \P^1$) of $\varphi$ as a translation of $S_t$. Translation automorphisms of $\varphi$ form a finitely generated abelian group of rank $\le \rho(S) -2$, called the {\it Mordell-Weil group} of $\varphi$. We denote by ${\rm MW}(\varphi)$ the Mordell-Weil group of $\varphi$ and by $mr(\varphi)$ the rank of ${\rm MW}(\varphi)$. One of the particular feature of a K3 surface $S$ is that $S$ often has several different genus one fibrations and Jacobian fibrations especially when $\rho(S)$ is larger. The following two invariants are crucial in our study:
$$mr(S) := {\rm Max}\, \{mr(\varphi)\,|\, \varphi : S \to \P^1 \text{ is a genus one fibration on } S\},$$
$$jmr (S) := {\rm Max}\, \{mr(\varphi)\,|\, \varphi : S \to \P^1 \text{ is a Jacobian fibration on } S\}.$$
We do not define $mr(S)$ (resp. $jmr(S)$) when $S$ has no genus one fibration (resp. no Jacobian fibration).  

\medskip

Our starting points are the following two observations:

\begin{theorem}\label{main11} Let $S$ be a K3 surface. Then the following four statements are equivalent:

\begin{enumerate}

\item ${\rm Aut}\,(S)$ is non-elementary hyperbolic; 

\item $\Z * \Z \subset {\rm Aut}\,(S)$ and $\Z^2 \not\subset {\rm Aut}\,(S)$; 

\item ${\rm Aut}\,(S)^*$ is non-elementary hyperbolic;

\item $\Z * \Z \subset {\rm Aut}\,(S)^*$ and $\Z^2 \not\subset {\rm Aut}\,(S)^*$. 

\end{enumerate}
Moreover, in this case, $\rho\, (S) \ge 3$. 
\end{theorem}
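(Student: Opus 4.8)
The plan is to prove the equivalences by first reducing everything to the lattice side, then handling the implications among (3), (4) and the final Picard-number bound.

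First I would observe that the map $\mathrm{Aut}\,(S) \to \mathrm{Aut}\,(S)^*$ has finite kernel: its kernel acts trivially on $\mathrm{NS}\,(S)$ and, by the global Torelli theorem, such an automorphism also preserves the transcendental lattice and the period $\omega_S$ up to a root of unity, so the kernel is finite (this is the classical Nikulin--Piatetskii-Shapiro--Shafarevich fact, sometimes phrased as ``$\ker$ is finite''). Since hyperbolicity and non-elementariness are quasi-isometry invariants (and in particular invariant under passing to a finite-index subgroup or taking a quotient by a finite normal subgroup), this immediately gives $(1)\Leftrightarrow(3)$, and likewise $\Z*\Z \subset \mathrm{Aut}\,(S) \Leftrightarrow \Z*\Z \subset \mathrm{Aut}\,(S)^*$ and $\Z^2 \subset \mathrm{Aut}\,(S) \Leftrightarrow \Z^2 \subset \mathrm{Aut}\,(S)^*$ (up to commensurability, which is all that matters for these properties), so $(2)\Leftrightarrow(4)$. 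Thus it suffices to prove $(3)\Leftrightarrow(4)$ and the final assertion, working entirely inside $\mathrm{Aut}\,(S)^* \subset \mathrm{O}\,(\mathrm{NS}\,(S))$.

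Next, for $(3)\Rightarrow(4)$: a non-elementary hyperbolic group is not virtually cyclic and, by the Tits alternative for hyperbolic groups quoted in the text, every finitely generated subgroup is either virtually cyclic or contains $\Z*\Z$; applying this to the whole group (which is finitely generated, being a quotient of the finitely generated $\mathrm{Aut}\,(S)$), non-elementariness forces $\Z*\Z \subset \mathrm{Aut}\,(S)^*$. And a hyperbolic group contains no $\Z^2$ (also quoted in the text), giving $\Z^2 \not\subset \mathrm{Aut}\,(S)^*$. For $(4)\Rightarrow(3)$ I would use the action of $\mathrm{Aut}\,(S)^*$ on the positive cone / hyperbolic space $\H^{\rho(S)-1}$ associated to the hyperbolic lattice $\mathrm{NS}\,(S)\otimes\R$: this action is properly discontinuous on the ample cone (again Sterk / the standard theory), so $\mathrm{Aut}\,(S)^*$ is a discrete group of isometries of a $\mathrm{CAT}(-1)$ space. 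For such groups one has a clean trichotomy: the group is finite, or virtually cyclic (elementary), or contains $\Z*\Z$ and is then ``large''; more precisely, one invokes that a discrete subgroup of $\mathrm{Isom}(\H^n)$ acting with a compact quotient of some convex subset, or more robustly a group acting properly on a hyperbolic space with unbounded orbit and no fixed point at infinity, is hyperbolic. The condition $\Z^2 \not\subset \mathrm{Aut}\,(S)^*$ rules out parabolic subgroups and cusps (a genuine cusp would produce a $\Z^2$, or at least a non-virtually-cyclic abelian parabolic subgroup in $\rho(S)\ge 4$; in $\rho(S)=3$ the relevant nilpotent/abelian parabolic is already virtually cyclic, which is why the argument needs care there), so the quotient is compact and $\mathrm{Aut}\,(S)^*$ acts cocompactly on a hyperbolic space, hence is hyperbolic by the Švarc--Milnor lemma; non-elementary then follows from $\Z*\Z \subset \mathrm{Aut}\,(S)^*$. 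I expect this converse direction — ruling out that $\Z^2$-free does not automatically give cocompactness, and dealing with the boundary/cusp behaviour of the action on $\H^{\rho(S)-1}$ — to be the main obstacle, and it is presumably where the input of Kikuta and Takatsu enters.

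Finally, for the assertion $\rho(S)\ge 3$: if $\rho(S)=1$ then $\mathrm{Aut}\,(S)^*$ is trivial or $\{\pm 1\}$, hence finite, not non-elementary. If $\rho(S)=2$ then $\mathrm{NS}\,(S)$ is a rank-two hyperbolic lattice; a classical dichotomy says $\mathrm{Aut}\,(S)$ is then either finite or virtually infinite cyclic (generated essentially by a single automorphism of positive entropy, or by translations along the at most two genus-one fibrations). In either case $\mathrm{Aut}\,(S)^*$ is virtually cyclic, hence elementary, contradicting (3). So non-elementary hyperbolicity forces $\rho(S)\ge 3$, completing the proof.
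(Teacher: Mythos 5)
Your reductions are fine and essentially match the paper: the finite kernel of ${\rm Aut}\,(S)\to{\rm Aut}\,(S)^*$ (Theorem \ref{thm21}) gives $(1)\Leftrightarrow(3)$ and $(2)\Leftrightarrow(4)$, the Tits alternative together with the absence of $\Z^2$ in hyperbolic groups gives $(3)\Rightarrow(4)$, and the virtual cyclicity of ${\rm Aut}\,(S)$ for $\rho(S)\le 2$ gives the final assertion. The genuine gap is in $(4)\Rightarrow(3)$, which is the whole content of the theorem. Your proposed mechanism --- ``no $\Z^2$ rules out cusps, hence the action on (the convex core of) $\H^{\rho(S)-1}$ is cocompact, hence the group is hyperbolic by \v{S}varc--Milnor'' --- fails at two points. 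First, a discrete subgroup of ${\rm Isom}(\H^{n})$ need not act cocompactly on its convex core even when it has no parabolics at all; one must know in advance that ${\rm Aut}\,(S)$ is \emph{geometrically finite}, and this is exactly the Kikuta--Takatsu theorem (Theorem \ref{thm31}), which you mention only as ``presumably where their input enters'' without actually invoking it. Second, even granting geometric finiteness, the absence of $\Z^2$ does not eliminate the cusps: a maximal parabolic subgroup can be virtually cyclic (a rank-one cusp, which exists in any dimension, not only for $\rho(S)=3$ as your parenthetical suggests), in which case the quotient of the truncated convex core is noncompact and yet no $\Z^2$ appears. So cocompactness cannot be concluded.

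The correct route, which the paper takes, is: geometric finiteness gives that ${\rm Aut}\,(S)$ is relatively hyperbolic relative to its maximal parabolic subgroups, which are virtually abelian of finite rank; the hypothesis $\Z^2\not\subset{\rm Aut}\,(S)$ forces each of these to be virtually cyclic; and one then needs the Drutu--Sapir theorem (Theorem \ref{thm32}) that a group hyperbolic relative to a collection of virtually cyclic subgroups is already hyperbolic. This last step is not cosmetic --- it is precisely how one passes from ``hyperbolic relative to rank-one cusps'' to ``hyperbolic'' --- and it is absent from your sketch. Non-elementarity then follows from $\Z*\Z\subset{\rm Aut}\,(S)$ as you say.
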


Theorem \ref{main11} is an immediate consequence of an important result of Kikuta and Takatsu (\cite[Theorem A (1)]{Ki24}, \cite[Theorem 5.13]{Ta24}), which claims the geometrical finiteness of ${\rm Aut}\,(S)$ of any K3 surface $S$, combined with general results on hyperbolic groups and the Torelli theorem for K3 surfaces \cite{PS71}. Since this is crucial also in our study of hyperbolicity, we mention this Theorem explicitly within our setting and present a proof following \cite{Ki24} in Section \ref{sect3}.

\begin{theorem}\label{main12} Let $S_i$ ($i = 1, 2$) be K3 surfaces such that ${\rm NS}\, (S_1) \simeq {\rm NS}\, (S_2)$. Then: 

\begin{enumerate} 

\item ${\rm Aut}\,(S_1)$ is non-elementary hyperbolic if and only if so is ${\rm Aut}\, (S_2)$. 

\item $S_1$ has a genus one fibration if and only if so does $S_2$ and in this case $mr(S_1) = mr(S_2)$.

\item $S_1$ has a Jacobian fibration if and only if so does $S_2$ and in this case $jmr(S_1) = jmr(S_2)$.

\end{enumerate}
\end{theorem}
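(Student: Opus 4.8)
The plan is to deduce all three statements from the principle that everything in sight is controlled by the lattice ${\rm NS}\,(S)$ equipped with its positive cone and $(-2)$-vectors. For (1), by Theorem \ref{main11} it suffices to prove that ${\rm Aut}\,(S_1)^*$ is non-elementary hyperbolic if and only if ${\rm Aut}\,(S_2)^*$ is. Recall the theorem of Piatetskii-Shapiro--Shafarevich \cite{PS71} (see also \cite{Ni81}): writing $W(S) \subset {\rm O}^+({\rm NS}\,(S))$ for the subgroup generated by reflections in $(-2)$-classes and $A_S := \{g \in {\rm O}^+({\rm NS}\,(S)) : g(\overline{{\rm Amp}\,(S)}) = \overline{{\rm Amp}\,(S)}\}$ for the stabilizer of the nef cone, one has ${\rm O}^+({\rm NS}\,(S)) = W(S)\cdot A_S$ with $W(S)\cap A_S = \{{\rm id}\}$, and $[A_S : {\rm Aut}\,(S)^*] < \infty$. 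Since $A_S \cong {\rm O}^+({\rm NS}\,(S))/W(S)$, the group $A_S$ depends up to isomorphism only on the lattice ${\rm NS}\,(S)$; thus an isometry ${\rm NS}\,(S_1) \simeq {\rm NS}\,(S_2)$ yields $A_{S_1} \cong A_{S_2}$, so ${\rm Aut}\,(S_1)^*$ and ${\rm Aut}\,(S_2)^*$ are abstractly commensurable. As being non-elementary hyperbolic is preserved under passing to and from finite-index subgroups (hyperbolicity is a quasi-isometry invariant, and being virtually cyclic is inherited in both directions), this proves (1).

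For (2) and (3) I would first invoke Proposition \ref{prop21}: $S$ carries a genus one fibration (resp. a Jacobian fibration) exactly when ${\rm NS}\,(S)$ represents $0$ (resp. contains $U$), conditions visibly preserved by an isometry ${\rm NS}\,(S_1) \simeq {\rm NS}\,(S_2)$. For the Mordell--Weil ranks, the key points are: (i) genus one fibrations on $S$ correspond bijectively to primitive isotropic nef classes $e \in {\rm NS}\,(S)$, via the morphism $\varphi_e : S \to \P^1$ defined by $|e|$, the Jacobian ones being exactly those $e$ of divisibility $1$ in ${\rm NS}\,(S)$ (i.e. with $D\cdot e = 1$ for some $D \in {\rm NS}\,(S)$); and (ii) for each such $e$ the integer $mr(\varphi_e)$ depends only on the isometry class of the pair $({\rm NS}\,(S), e)$. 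For (ii), set $W_e := e^\perp/\Z e$, a negative-definite lattice of rank $\rho(S)-2$, and let $R_e \subseteq W_e$ be generated by its $(-2)$-classes; then $mr(\varphi_e) = {\rm rank}\,W_e - {\rm rank}\,R_e$. This follows from the Shioda--Tate formula for the Jacobian $j_{\varphi_e} : S_{\varphi_e} \to \P^1$ (which has the same configuration of singular fibres as $\varphi_e$, satisfies $\rho(S_{\varphi_e}) = \rho(S)$, and whose Mordell--Weil group is realized by the translation automorphisms of $\varphi_e$, so that $mr(j_{\varphi_e}) = mr(\varphi_e)$), together with the fact --- via Riemann--Roch and the absence of multiple fibres on genus one fibrations of K3 surfaces --- that $R_e$ is precisely the orthogonal direct sum of the root lattices of the reducible fibres, so ${\rm rank}\,R_e = \sum_v (m_v - 1)$; all of this is standard, see \cite{CD89}.

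To conclude (2) and (3): given an isometry $\psi : {\rm NS}\,(S_1) \to {\rm NS}\,(S_2)$, after composing with $-{\rm id}$ if necessary we may assume $\psi$ takes the positive cone of $S_1$ onto that of $S_2$; since $W(S_i)$ acts on the positive cone with the nef cone as a fundamental domain and $\psi$ carries $(-2)$-classes to $(-2)$-classes, after composing further with a suitable element of $W(S_2)$ we may assume $\psi(\overline{{\rm Amp}\,(S_1)}) = \overline{{\rm Amp}\,(S_2)}$. Then $\psi$ carries primitive isotropic nef classes to primitive isotropic nef classes, and divisibility-$1$ ones to divisibility-$1$ ones, hence induces a bijection between the genus one (resp. Jacobian) fibrations of $S_1$ and those of $S_2$; and since $\psi$ is an isometry of the pairs $({\rm NS}\,(S_1), e)$ and $({\rm NS}\,(S_2), \psi(e))$, point (ii) gives $mr(\varphi_e) = mr(\varphi_{\psi(e)})$ throughout. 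Taking maxima yields $mr(S_1) = mr(S_2)$ and, restricting to Jacobian fibrations, $jmr(S_1) = jmr(S_2)$.

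The main obstacle is establishing (ii), i.e. that $mr(\varphi_e)$ is a lattice invariant of $({\rm NS}\,(S), e)$: one must identify the translation automorphisms of $\varphi_e$ with the Mordell--Weil group of its Jacobian, control the passage to the (in general non-isomorphic) Jacobian surface $S_{\varphi_e}$ --- for which one needs $\rho(S_{\varphi_e}) = \rho(S)$ and the coincidence of singular fibre types --- and verify that the root sublattice $R_e$ of the frame $W_e$ is entirely accounted for by reducible fibres. This is where the geometry of genus one fibrations, as opposed to pure lattice theory, genuinely enters; it is standard Shioda--Tate theory \cite{CD89}, but is the technical heart of the argument. The remaining steps --- the reduction of (1) to commensurability and the adjustment of $\psi$ by an element of $W(S_2)$ --- are routine.
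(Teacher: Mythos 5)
Your proposal is correct and follows essentially the same route as the paper: part (1) is reduced via the Torelli theorem (Theorem \ref{thm21}) to the commensurability of ${\rm Aut}\,(S_i)^*$ with the lattice-theoretic group ${\rm O}\,({\rm NS}\,(S_i))/W({\rm NS}\,(S_i))$, and parts (2) and (3) follow from Proposition \ref{prop21} together with the Shioda--Tate identification $mr(\varphi)=mr({\rm NS}\,(S),f)$ of Theorem \ref{thm22}. The only cosmetic difference is that you transport nef classes explicitly by adjusting the isometry with $-{\rm id}$ and a Weyl group element, whereas the paper defines $mr(L,x)$ for arbitrary primitive isotropic $x$ (Definition \ref{def21}) so that $mr({\rm NS}\,(S))$ is manifestly a lattice invariant; the two devices are equivalent.
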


Theorem \ref{main12} is then a formal but important consequence, which confirms that the non-elementary hyperbolicity of ${\rm Aut}\, (S)$ of a K3 surface depends only on the isomorphism class of the lattice ${\rm NS}\, (S)$. We prove Theorems \ref{main11} and \ref{main12} in Section \ref{sect2}, after a brief review of basic notions on K3 surfaces and lattices in Section \ref{sect1} and a brief introduction of hyperbolic groups in Section \ref{sect2}.

\medskip

We then give a more algebro-geometric characterization:

\begin{theorem}\label{main13} Let $S$ be a K3 surface with $\rho\, (S) \ge 5$. Then ${\rm Aut}\,(S)$ is non-elementary hyperbolic if and only if $mr(S) \le 1$ and ${\rm Aut}\,(S)$ is of positive entropy, in the sense that there is $f \in {\rm Aut}\, (S)$ such that $d_1(f) > 1$, where $d_1(f)$ is the spectral radius of $f^{*}|_{{\rm NS}\, (S)}$.
\end{theorem}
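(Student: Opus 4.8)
The plan is to use the Tits-alternative/Kikuta--Takatsu characterization in Theorem \ref{main11}, so that ${\rm Aut}\,(S)$ is non-elementary hyperbolic if and only if $\Z * \Z \subset {\rm Aut}\,(S)^*$ and $\Z^2 \not\subset {\rm Aut}\,(S)^*$. The content of the present theorem is thus to translate these two conditions into the two algebro-geometric conditions ``$mr(S) \le 1$'' and ``positive entropy''.

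\medskip

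First I would handle the entropy condition. The existence of $f \in {\rm Aut}\,(S)$ with $d_1(f) > 1$ is equivalent to ${\rm Aut}\,(S)^*$ containing an element acting on ${\rm NS}\,(S)$ with spectral radius $> 1$, i.e. a \emph{hyperbolic} (loxodromic) isometry of the hyperbolic lattice ${\rm NS}\,(S)$, equivalently a loxodromic isometry of the associated hyperbolic space $\H^{\rho(S)-1}$. Since a non-elementary hyperbolic group contains $\Z * \Z$ and hence is not virtually a group of parabolic/elliptic-type isometries, one direction is immediate: if ${\rm Aut}\,(S)$ is non-elementary hyperbolic, it acts on $\H^{\rho(S)-1}$ with at least one loxodromic element (indeed $\Z*\Z$ already forces this, as a purely parabolic subgroup is abelian-by-finite up to the structure of parabolic subgroups, which is ruled out). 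So the positive-entropy conclusion comes for free. For the converse I need positive entropy as an \emph{assumption} to exclude the degenerate case where ${\rm Aut}\,(S)^*$ is infinite but consists only of elliptic elements (a finite image is excluded once $\Z*\Z \subset {\rm Aut}\,(S)^*$, but I should be careful: I want $\Z*\Z$ to actually sit inside, and this needs the dynamics).

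\medskip

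Next, the key step: relating $mr(S) \le 1$ to the condition $\Z^2 \not\subset {\rm Aut}\,(S)^*$. If $mr(S) \ge 2$, pick a genus one fibration $\varphi\colon S \to \P^1$ with $mr(\varphi) \ge 2$; then the Mordell--Weil group ${\rm MW}(\varphi)$ contains $\Z^2$ realized by translation automorphisms, and these act on $S$, hence we get $\Z^2 \subset {\rm Aut}\,(S)$ (one must check these translations are genuine biregular automorphisms and that the subgroup they generate has rank $2$ inside ${\rm Aut}\,(S)$ — this uses that ${\rm MW}(\varphi)$ embeds in ${\rm Aut}\,(S)$, which is standard for K3 surfaces since the fibration is canonically attached to its class in ${\rm NS}\,(S)$). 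Conversely — and this is the real crux — if $\Z^2 \subset {\rm Aut}\,(S)^*$, I want to produce a genus one fibration with $mr \ge 2$. Here I would invoke the structure of abelian subgroups of ${\rm O}^+({\rm NS}\,(S))$ acting on $\H^{\rho(S)-1}$: a $\Z^2$ cannot act with a loxodromic element (two commuting isometries with a loxodromic one share the axis/endpoints, forcing the subgroup to be elementary, hence virtually cyclic, contradiction), so $\Z^2$ acts with only elliptic and parabolic elements; being infinite, it contains a parabolic, and by commutativity the whole $\Z^2$ fixes a common boundary point $\xi \in \partial \H^{\rho(S)-1}$. A rational such $\xi$ corresponds (via the identification of isotropic rays in ${\rm NS}\,(S)$ with genus one fibrations, Proposition \ref{prop21}) to a genus one fibration $\varphi$, and the subgroup fixing that isotropic class and acting trivially on the base is, up to finite index, the Mordell--Weil group; I then need to check that the $\Z^2$ (possibly after passing to a finite-index subgroup, which is harmless) lands in ${\rm MW}(\varphi)$ up to finite index, giving $mr(\varphi) \ge 2$. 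The point requiring care is that the common fixed point of a parabolic $\Z^2$ is automatically \emph{rational} (defined over $\Q$) as an isotropic line in ${\rm NS}\,(S)_\Q$ — this should follow because the parabolic fixed point is the unique isotropic eigendirection and the isometries are defined over $\Z$, so the fixed line is defined over $\Q$ and hence (clearing denominators) gives a primitive isotropic vector in ${\rm NS}\,(S)$.

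\medskip

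I expect the main obstacle to be this last point: carefully showing that ``$\Z^2 \subset {\rm Aut}\,(S)^*$'' forces a \emph{rational} isotropic ray fixed by a finite-index subgroup, and that the translation part along the corresponding elliptic fibration has rank $\ge 2$ — i.e. ruling out the scenario where the $\Z^2$ projects with infinite image to ${\rm Aut}\,(\P^1)$ or to the automorphisms of the generic fiber other than translations. This is where I would use the finiteness of ${\rm Aut}\,(S)^*$ modulo the ``obvious'' parts (the Mordell--Weil and the base action, which is finite for a K3 since $C = \P^1$ and only finitely many fibers are singular), together with the hypothesis $\rho(S) \ge 5$ to guarantee that genus one fibrations exist in the first place (so that ${\rm NS}\,(S)$ represents $0$) and that the combinatorics are not degenerate. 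The forward direction (non-elementary hyperbolic $\Rightarrow$ $mr(S)\le 1$ and positive entropy) is comparatively soft, following from Theorem \ref{main11} and the observation that $mr(\varphi) \ge 2$ would embed $\Z^2$.
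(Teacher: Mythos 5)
Your overall architecture matches the paper's: reduce via Theorem \ref{main11} to the two conditions $\Z * \Z \subset {\rm Aut}\,(S)$ and $\Z^2 \not\subset {\rm Aut}\,(S)$, and prove the key implication ``$\Z^2 \subset {\rm Aut}\,(S) \Rightarrow mr(S) \ge 2$''. Your treatment of that key implication is a legitimate repackaging of the paper's Proposition \ref{prop41}: where you argue with the loxodromic/parabolic/elliptic trichotomy for discrete isometry groups of $\H^{\rho-1}$ (centralizer of a loxodromic is virtually cyclic, so a $\Z^2$ is parabolic and has a common boundary fixed point), the paper argues with Salem numbers, Perron--Frobenius eigenvectors and an injective homomorphism to $\R_{>0}$, then extracts the rational isotropic class as $\lim_n h^n(w)/n^s$ for a unipotent power $h$ and ample $w \in {\rm NS}\,(S)$. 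The latter limit argument is what makes rationality (and nefness) of the fixed isotropic ray airtight — the point you correctly flag as delicate — and the remaining steps (image in ${\rm Aut}\,(\P^1)$ finite because there are at least three singular fibers, non-translations of an elliptic curve have order $\le 6$) are as you describe.

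The genuine gap is in the converse direction, at the step you defer with ``I want $\Z*\Z$ to actually sit inside, and this needs the dynamics.'' From the hypotheses ``$mr(S)\le 1$ and positive entropy'' you must produce $\Z * \Z \subset {\rm Aut}\,(S)$, and positive entropy alone does not do this: a group generated by a single loxodromic (Salem) isometry has positive entropy but is virtually cyclic, hence \emph{elementary} hyperbolic — and this actually occurs for K3 surfaces with $\rho(S)=2$. Ruling this out is exactly where $\rho(S) \ge 5$ is used in a way your proposal misses (you invoke $\rho \ge 5$ only for the existence of genus one fibrations). The paper closes this by citing the equivalence, valid for $\rho \ge 5$, between ``${\rm Aut}\,(S)$ is not of positive entropy'' and ``${\rm Aut}\,(S)$ is virtually abelian'' (Nikulin \cite[Theorem 9.1.1]{Ni81} together with \cite[Theorem 1.4]{Og07}); positive entropy then forces ${\rm Aut}\,(S)$ to be non-virtually-abelian, and the Tits alternative for K3 automorphism groups \cite[Theorem 1.1]{Og06} yields $\Z * \Z \subset {\rm Aut}\,(S)$. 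This input is not something you can recover from the soft Kleinian-group considerations in your sketch; without it the converse implication is not proved. The forward direction of your proposal (non-elementary hyperbolic $\Rightarrow$ loxodromic element exists, and $mr(S)\ge 2$ would embed $\Z^2$) is fine.
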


The same relation between relative hyperbolicity and positivity of entropy is already noticed by \cite[Corollary 4.5]{Ki24}. Also, the relation between hyperbolicity and the Mordell-Weil rank is one of the cores of our study. We deduce this from Proposition \ref{prop41}, which is a purely algebro-geometric statement. One of the referees kindly informed us that Proposition \ref{prop41} can be also deduced from Kleinian group theory and the corresponding result in Kleinian group theory should be known 
for some experts. In this paper, we keep our purely algebro-geometruc proof, as Proposition \ref{prop41} is very important in our study and other potential applications in algebraic geometry, whereas it does not seem to be much recognized in algebraic geometry. We also believe that our purely algebro-geometric proof is of its own interest.
 
Here by the assumption $\rho\, (S) \ge 5$, $S$ has at least one genus one fibration (Proposition \ref{prop21}). We will also use the assumption $\rho\, (S) \ge 5$, instead of $\rho\, (S) \ge 3$ in Theorem \ref{main11}, to ensure the equivalence between the fact that the entropy of ${\rm Aut}\,(S)$ is positive and the fact that ${\rm Aut}\,(S)$ is not virtually abelian (see \cite{Yu25} and proof of Theorem \ref{main13}). The property of positive entropy of ${\rm Aut}\,(S)$ is also crucial to connect our study with a fundamental work due to the third author \cite{Yu25}, and independently Brandhorst-Mezzedimi \cite{BM22}, on the complete classification of N\'eron-Severi lattices of K3 surfaces with zero entropy automorphism group ${\rm Aut}\, (S)$, to make a complete list in Section \ref{sect6}, see also Theorem \ref{main18}. We prove Theorem \ref{main13} in Section \ref{sect2} also by using Proposition \ref{prop41} in an essential way.

\medskip

By Theorem \ref{main12}, we may speak of the following sets:
\medskip
$$\sH_{\{0\}} := \{ {\rm NS}\, (S)\,|\, S \in \sS_{\{0\}}\text{ s.t. } mr(S) \le 1\}/{\rm isomorphisms};$$
$$\sH_{\{0\}}^{\ge r} := \{ {\rm NS}\, (S)\,|\, S \in \sS_{\{0\}} \text{ s.t. } \rho\, (S) \ge r\, ,\, mr(S) \le 1 \}/{\rm isomorphisms};$$
$$\sH_{U} := \{ {\rm NS}\, (S)\,|\, S \in \sS_{U} \text{ s.t. } mr(S) \le 1\}/{\rm isomorphisms};$$
$$\sH_{U}^{\ge r} := \{ {\rm NS}\, (S)\,|\, S \in \sS_{U} \text{ s.t. } \rho\, (S) \ge r\, ,\, mr(S) \le 1\}/{\rm isomorphisms};$$
$$\sH_{{\rm hyp}}^{\ge r} := \{ {\rm NS}\, (S)\,|\, S \in \sS\, \text{ s.t. } \rho\, (S) \ge r\, ,\, {\rm Aut}\, (S)\, \text{ is non-elementary hyperbolic}\}/{\rm isomorphisms},$$

\medskip

\noindent
where $\sS$ is the set of isomorphism classes of K3 surfaces and $\sS_{\{0\}}$ (resp. $\sS_{U}$) is the set of isomorphism classes of K3 surfaces with at least one genus one fibration (resp. at least one Jacobian fibration). Note that $S \in \sS_{\{0\}}$ (resp. $S \in \sS_{U}$) if and only if ${\rm NS}\, (S)$ represents $0$ (resp. $U \subset {\rm NS}\, (S)$) (Proposition \ref{prop21}).

\medskip

Our first main result is the following:

\begin{theorem}\label{main} Let $S$ be a K3 surface. Then
\begin{enumerate}

\item $\sH_{U}^{\ge 6}$ is a finite set.
 
\item $\sH_{0}^{\ge 6}$ is a finite set. 

\item $\sH_{{\rm hyp}}^{\ge 6}$ is a finite set, that is, the isomorphism classes of the N\'eron-Severi lattices ${\rm NS}\, (S)$ of K3 surfaces $S$ of $\rho(S) \ge 6$ with non-elementary hyperbolic ${\rm Aut}\, (S)$ are finite. 

\end{enumerate}
\end{theorem}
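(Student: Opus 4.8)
The plan is to prove the three statements in the order (1), (2), (3). Statement (1) I would prove by lattice theory, using the Shioda-Tate formula to translate the condition $mr(S)\le 1$ into a strong constraint on ${\rm NS}(S)$. Statement (2) I would then deduce either by running the same argument for an arbitrary genus one fibration, or by passing to the Jacobian surface of one of the genus one fibrations of $S$ and invoking (1). Statement (3) follows formally from (2) together with Theorem \ref{main13}.

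Here is the argument for (1) in more detail. Fix $S\in\sS_U$ with $\rho(S)\ge 6$ and $mr(S)\le 1$, and choose a Jacobian fibration $\varphi\colon S\to\P^1$, so that ${\rm NS}(S)=U\oplus W_\varphi$ with $W_\varphi$ a negative definite even lattice of rank $\rho(S)-2\ge 4$, the frame lattice of $\varphi$. The root sublattice $R_\varphi\subset W_\varphi$, generated by the $(-2)$-classes, is the orthogonal sum of the ADE lattices of the reducible fibres of $\varphi$, and the Shioda-Tate formula reads $mr(\varphi)=\rank W_\varphi-\rank R_\varphi$. Since $mr(S)\le 1$, the root lattice $R_\varphi$ has corank $0$ or $1$ in $W_\varphi$. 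Now up to isometry there are only finitely many root lattices of a given rank, and every even lattice has only finitely many even overlattices (they are classified by the isotropic subgroups of the finite discriminant group). Hence, if $R_\varphi$ has finite index in $W_\varphi$, there are only finitely many possibilities for $W_\varphi$, and thus for ${\rm NS}(S)$. If $R_\varphi$ has corank $1$ in $W_\varphi$, write $\langle w\rangle=R_\varphi^{\perp}\cap W_\varphi$; then $(w^2)$ is, up to a universally bounded factor, the minimal height of the rank one Mordell-Weil lattice of $\varphi$, and once $(w^2)$ is bounded the same overlattice count again yields finiteness. (For a genus one fibration without section one replaces $W_\varphi$ throughout by $F^{\perp}/\Z F$, $F$ the class of a fibre, which is negative definite of rank $\rho(S)-2$ and for which the analogous Shioda-Tate formula holds.)

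For (2), given $S\in\sS_{\{0\}}$ with $\rho(S)\ge 6$ and $mr(S)\le 1$, I would either run the lattice argument above directly with $W_\varphi$ replaced by $F^{\perp}/\Z F$ for a genus one fibration $\varphi$ of $S$, or pass to the Jacobian $S_\varphi$, which has the same Picard number as $S$ and for which one shows $mr(S_\varphi)\le 1$ (cf.\ Theorem \ref{main15}); then ${\rm NS}(S_\varphi)$ lies in the finite set produced by (1), and ${\rm NS}(S)$ lies in a finite set because $S$ is a torsor of $j_\varphi$ of period bounded in terms of the discriminant of ${\rm NS}(S_\varphi)$. For (3): if $\rho(S)\ge 6$ and ${\rm Aut}(S)$ is non-elementary hyperbolic, then $\rho(S)\ge 5$, so Theorem \ref{main13} gives $mr(S)\le 1$, while Proposition \ref{prop21} gives that $S$ has a genus one fibration; thus $\sH_{{\rm hyp}}^{\ge 6}\subseteq\sH_{\{0\}}^{\ge 6}$, and (3) follows from (2).

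The main obstacle is the bound on $(w^2)$ in the corank one case --- equivalently, a uniform upper bound for the minimal height of a rank one Mordell-Weil lattice on an elliptic K3 surface $S$ with $\rho(S)\ge 6$ and $mr(S)\le 1$. The point is that the condition $mr(\varphi)\le 1$ for the single chosen fibration $\varphi$ is far too weak to bound ${\rm NS}(S)$; one must exploit that \emph{every} genus one fibration on $S$ has Mordell-Weil rank $\le 1$, and this is where the geometric finiteness of ${\rm Aut}(S)$ established by Kikuta and Takatsu becomes decisive: a large $(w^2)$ should produce, via an extra section or a suitable ``rotation'' of $\varphi$, either a genus one fibration on $S$ of Mordell-Weil rank $\ge 2$ (contradicting $mr(S)\le 1$) or infinitely many ${\rm Aut}(S)^{*}$-inequivalent genus one fibrations on $S$ (contradicting geometric finiteness). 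Turning this dichotomy into an effective bound, together with the comparison $mr(S_\varphi)\le 1$ needed in the reduction of (2), is the technical heart of the proof; and the hypothesis $\rho(S)\ge 6$ rather than $\rho(S)\ge 5$ is exactly what makes the bound hold, in accordance with the failure recorded in Remark \ref{rem11}.
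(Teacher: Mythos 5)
Your skeleton matches the paper's: handle $mr=0$ by Nikulin, use Shioda--Tate to force the root lattice $M_{rt}$ to have corank $\le 1$ in the frame lattice, reduce (2) to (1) via the Jacobian, and deduce (3) from Theorem \ref{main13} (the paper uses Theorem \ref{main11}, but with $\rho\ge 5$ either works). However, you explicitly leave unproven the two steps that constitute the entire content of the theorem, and the mechanism you sketch for the first one is not the one that works. In the corank-one case of (1) you need a uniform bound on $(w^2)$, i.e.\ on $|{\rm det}\,M|$ for fixed $M_{rt}$; you propose to get it from a dichotomy ``either a fibration of Mordell--Weil rank $\ge 2$ or infinitely many ${\rm Aut}(S)^*$-inequivalent fibrations contradicting geometric finiteness,'' but geometric finiteness of ${\rm Aut}(S)$ plays no role in the paper's proof of (1) and (2) and cannot easily be made quantitative here (it holds for \emph{every} K3 surface, including those with $\rho=5$ where finiteness fails, so it cannot by itself yield the bound). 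The paper's actual device is purely lattice-theoretic: Lemma \ref{lem:subtest} says that if $L_1\subset L$ is a primitive isotropic corank-one sublattice with $|{\rm det}(L)/{\rm det}(L_1)|>2$, then $mr(L)\ge mr(L_1)+1$. One writes down the explicit sublattice $L_p=\Z\langle x,\,py-v_r,\,v_1,\dots,v_{r-1}\rangle$ of $U\oplus M$, whose determinant is $p^2|{\rm det}\,P|$ with $P$ the primitive closure of $M_{rt}$; by Nikulin's classification only finitely many primes divide determinants of rank $\ge 5$ even hyperbolic lattices with $mr=0$ (Lemma \ref{finitePrime}), so for a suitable prime $p$ one has $mr(L_p)\ge 1$, and the contrapositive of Lemma \ref{lem:subtest} forces $|{\rm det}\,M|\le 2p^2|{\rm det}\,M_{rt}|$. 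This is where $\rho\ge 6$ (i.e.\ ${\rm rank}\,L_p\ge 5$) enters, not through geometric finiteness.

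The second gap is in (2): after passing to the Jacobian you assert that ${\rm NS}(S)$ ranges over a finite set ``because $S$ is a torsor of $j_\varphi$ of period bounded in terms of the discriminant of ${\rm NS}(S_\varphi)$.'' This is precisely what must be proved, and it is false a priori: $[{\rm NS}(X_\varphi):{\rm NS}(X)]=l$ is the multisection index, $|{\rm det}\,{\rm NS}(X)|=l^2\,|{\rm det}\,{\rm NS}(X_\varphi)|$, and nothing elementary bounds $l$ by the discriminant of the Jacobian. The paper bounds $l$ by a second, more delicate application of Lemma \ref{lem:subtest}: one locates $M\subset(\Z f)^\perp/\Z f$ inside ${\rm NS}(X)$, writes $M^\perp_{{\rm NS}(X)}=\Z\langle f,z\rangle$ with $(z^2)$ normalized into the window $[2ls,4ls-1]$, forms the primitive closure $P_1$ of $\Z z_1\oplus M$ with $z_1=f+qz$ for a suitable prime $q$, and concludes $l^2/(2qsl+q^2b)\le 2$, whence $l\le (4q+8q^2)d$. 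You also need $mr(X_\varphi)\le mr(X)$ (Corollary \ref{cor51}, via Keum's description of ${\rm NS}(X_\varphi)$ as an overlattice of ${\rm NS}(X)$), which you cite correctly but do not establish. In short: the architecture is right, part (3) is complete, but the two quantitative bounds that you yourself identify as ``the technical heart'' are exactly what is missing, so the proposal does not constitute a proof.
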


\begin{remark}\label{rem11} 
\begin{enumerate}
\item Theorems \ref{main} and \ref{main18} are also a kind of generalization of Nikulin \cite{Ni81} for $mr(S)=0$ to $mr(S)\le 1$. 
\item By Nikulin \cite[Theorem 0.2.6]{Ni81} with \cite[Theorem 1.3 and Appendix A]{Yu25}, \cite[Corollary 1.5]{BM22} and \cite[Section 16]{Ro22}, we see that $\sH_{{\rm hyp}}^{\ge 5}$ is an {\it infinite} set. In particular, our restriction $r \ge 6$ is optimal to make $\sH_{{\rm hyp}}^{\ge r}$ {\it finite}. 
\item By \cite[Theorems 0.2.4, 0.2.5, 0.2.7]{Ni81}, if a K3 surface $S$ satisfies both $\rho(S)\ge 6$ and $mr(S)=0$, then ${\rm Aut}(S)$ is finite. Thus, we have $mr(S)=1$ for K3 surfaces $S$ with ${\rm NS}(S)\in \sH_{{\rm hyp}}^{\ge 6}$. 
\item Mukai proposed the conjecture that for any K3 surface $S \in \sS_{\{0\}}$, the virtual cohomological dimension ${\rm vcd}({\rm Aut}(S))$ of ${\rm Aut}(S)$ is equal to $mr(S)$ (see e.g. \cite[Conjecture 1]{Ta24}). However, this conjecture seems to need some modification suggested by the examples in \cite[Theorem 0.2.6]{Ni81}; for example, if ${\rm NS}(S)\cong \langle 2^5\rangle \oplus D_4$, then one has $mr(S)=0$ and $|{\rm Aut(S)}|=\infty$, thus ${\rm vcd}({\rm Aut}(S))\ge 1$. 
\end{enumerate}
\end{remark}

\begin{remark}
\begin{enumerate}
\item For any non-projective K3 surface $X$, ${\rm Aut}(X)$ is virtually abelian (\cite[Theorem 1.5]{Og08}) so that ${\rm Aut}(X)$ is not non-elementary hyperbolic. 
\item For any projective K3 surface $S$ with $\rho(S)\le 2$, ${\rm Aut}\, (S)$ is virtually cyclic (see e.g., Theorem \ref{thm21}) so that ${\rm Aut}(S)$ is not non-elementary hyperbolic either.
\end{enumerate}
\end{remark}

In the course of proof of Theorem \ref{main}, we also need the following theorem, which has its own interest (see also Corollary \ref{cor51}):

\begin{theorem}\label{main15} Let $S$ be a K3 surface such that $U \subset {\rm NS}\, (S)$, i.e., a K3 surface with at least one Jacobian fibration. Then $mr(S) = jmr(S)$. 
\end{theorem}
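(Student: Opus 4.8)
We must show that for a K3 surface $S$ with $U \subset {\rm NS}\,(S)$, the maximal Mordell-Weil rank over all genus one fibrations equals the maximal Mordell-Weil rank over all Jacobian fibrations, i.e. $mr(S) = jmr(S)$. Since every Jacobian fibration is a genus one fibration, the inequality $mr(S) \ge jmr(S)$ is immediate, so the content is the reverse inequality $mr(S) \le jmr(S)$: given a genus one fibration $\varphi : S \to \P^1$ realizing $mr(S)$, we must produce a Jacobian fibration on the \emph{same} surface $S$ with Mordell-Weil rank at least $mr(\varphi)$.

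**Strategy via the Jacobian fibration and lattice theory.** The plan is to pass to the Jacobian fibration $j_\varphi : S_\varphi \to \P^1$ associated to $\varphi$. A basic fact (Shioda-Tate, cf. \cite{CD89}) is that the Mordell-Weil group of $\varphi$ is naturally isomorphic to that of $j_\varphi$ as abstract groups, so $mr(\varphi) = mr(j_\varphi)$; in particular $S_\varphi$ carries a Jacobian fibration of rank $mr(\varphi)$, giving $jmr(S_\varphi) \ge mr(\varphi) = mr(S)$. The difficulty is that $S_\varphi$ need not be isomorphic to $S$, so this does \emph{not} directly bound $jmr(S)$. However, by Theorem \ref{main12}(3), $jmr$ depends only on the isomorphism class of the N\'eron-Severi lattice. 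Hence it suffices to prove that ${\rm NS}\,(S) \simeq {\rm NS}\,(S_\varphi)$ as lattices. This is where one uses the hypothesis $U \subset {\rm NS}\,(S)$: the N\'eron-Severi lattice of the Jacobian fibration $S_\varphi$ is determined by the fibration data (the frame lattice $U^\perp$ inside ${\rm NS}$, or equivalently the trivial lattice and the Mordell-Weil lattice), and a classical result — going back to work on the relation between a genus one fibration and its Jacobian, see \cite{CD89} and the Nikulin-type arguments of \cite{Ni81} — is that $\varphi$ and $j_\varphi$ share the same singular fibres (same types, same configuration) and the same Mordell-Weil lattice. One then needs: (i) ${\rm NS}\,(S_\varphi) \simeq U \oplus ({\rm frame})$ because $j_\varphi$ has a section, so $U \subset {\rm NS}\,(S_\varphi)$ splits off as an orthogonal summand; and (ii) since ${\rm NS}\,(S)$ already contains $U$ as a \emph{primitive} sublattice, ${\rm NS}\,(S) \simeq U \oplus U^\perp_{{\rm NS}\,(S)}$ as well, with $U^\perp_{{\rm NS}\,(S)}$ isomorphic to the corresponding frame. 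The upshot is ${\rm NS}\,(S) \simeq {\rm NS}\,(S_\varphi)$, whence $jmr(S) = jmr(S_\varphi) \ge mr(S)$, and combined with $jmr(S) \le mr(S)$ we conclude.

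**Carrying out the steps.** Concretely I would proceed as follows. First, recall that $U \subset {\rm NS}\,(S)$ primitively (a copy of $U$ is automatically primitive, being unimodular), so ${\rm NS}\,(S) = U \oplus M$ with $M = U^\perp$ an even lattice of signature $(0, \rho(S)-2)$, i.e. negative definite. Second, for \emph{any} genus one fibration $\varphi$ on $S$ realizing $mr(S)$, consider the relevant fibre class $[F]$; by the theory of elliptic fibrations, ${\rm NS}\,(S)$ with the isotropic class $[F]$ is governed by the "frame lattice" $W_\varphi := [F]^\perp / \Z[F]$, and the Mordell-Weil rank is $mr(\varphi) = \rho(S) - 2 - {\rm rank}(\text{trivial lattice of }\varphi)$. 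Third, invoke the comparison theorem for $\varphi$ versus $j_\varphi$: they have the same $j$-invariant map, the same singular fibre types, and isomorphic Mordell-Weil groups and Mordell-Weil lattices (cf. \cite[Chapter 5]{CD89}); since $j_\varphi$ has a section, ${\rm NS}\,(S_\varphi) \simeq U \oplus (\text{root lattice of singular fibres}) \oplus (\text{something related to }MW)$ in the standard Shioda-Tate description, and this entire package is isomorphic to the analogous description of ${\rm NS}\,(S)$ built from $\varphi$. Fourth, conclude ${\rm NS}\,(S) \simeq {\rm NS}\,(S_\varphi)$, apply Theorem \ref{main12}(3) to get $jmr(S) = jmr(S_\varphi)$, and use $jmr(S_\varphi) \ge mr(j_\varphi) = mr(\varphi) = mr(S)$.

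**Main obstacle.** The crux — and the step I expect to require the most care — is the precise lattice-theoretic comparison ${\rm NS}\,(S) \simeq {\rm NS}\,(S_\varphi)$. While it is "well known" that a genus one fibration and its Jacobian have the same configuration of singular fibres and the same Mordell-Weil lattice, translating this into an isomorphism of the full N\'eron-Severi lattices requires knowing that $U$ splits off on \emph{both} sides and that the orthogonal complements match, including their discriminant forms and gluing data — not merely their ranks and signatures. One subtlety is that on $S$ the fibration $\varphi$ may have no section, so a priori $U$ might not be "adapted" to $\varphi$ in the naive way; but the hypothesis $U \subset {\rm NS}\,(S)$ (from a possibly different fibration) is exactly what lets us split ${\rm NS}\,(S) = U \oplus M$ abstractly, and then Nikulin's uniqueness theory for even lattices with a given orthogonal summand, together with the identification of $M$ with the frame lattice of $S_\varphi$, closes the gap. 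An alternative, possibly cleaner route to the same conclusion: show directly via the surjectivity of the period map (Torelli) that any even lattice of signature $(1, n)$ with $n \ge 1$ arising as ${\rm NS}\,(S_\varphi)$ is \emph{also} realized as ${\rm NS}\,$ of some K3 with the \emph{same} underlying surface-independent data, so that the two surfaces $S$ and $S_\varphi$ — having isomorphic transcendental lattices as well, since $T_S \simeq T_{S_\varphi}$ follows from the derived/Fourier-Mukai relationship between a K3 and the Jacobian of one of its elliptic fibrations — are in fact Fourier-Mukai partners with isometric N\'eron-Severi lattices. Either way, the heart of the proof is this lattice identification, and everything else is formal bookkeeping via Theorem \ref{main12}.
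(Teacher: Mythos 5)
Your reduction to $jmr(S)\ge mr(S)$ and the idea of passing to the Jacobian $j_\varphi:S_\varphi\to\P^1$ of a fibration $\varphi$ realizing $mr(S)$ match the paper, but the pivot of your argument --- the claim that ${\rm NS}\,(S)\simeq{\rm NS}\,(S_\varphi)$ --- is false in general, and this is a genuine gap rather than a technicality. By Keum's lemma (Theorem \ref{thm51}), ${\rm NS}\,(S_\varphi)\simeq\Z(f/l)+{\rm NS}\,(S)$ is an overlattice of ${\rm NS}\,(S)$ of index $l$, where $l$ is the divisibility of the fibre class $f$ in ${\rm NS}\,(S)$; hence $|{\rm det}\,{\rm NS}\,(S_\varphi)|=|{\rm det}\,{\rm NS}\,(S)|/l^2$. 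Whenever the optimal fibration $\varphi$ has no section (i.e.\ $l>1$), the two lattices have different determinants and cannot be isometric; in particular your step (ii), asserting that $U^\perp_{{\rm NS}\,(S)}$ is isomorphic to the frame of $j_\varphi$, fails for the same reason. The Fourier--Mukai variant does not rescue this: for $l>1$ the Mukai vector $(0,f,0)$ has divisibility $l$, $S_\varphi$ is not a derived partner of $S$, and $T_{S_\varphi}$ is not isometric to $T_S$. And if $l=1$ then $S_\varphi=S$ and there is nothing to prove, so the entire content of the theorem lies in exactly the case your identification excludes.

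What the paper does instead is weaker but sufficient: write ${\rm NS}\,(S)=U\oplus M$ (using the hypothesis $U\subset{\rm NS}\,(S)$) and ${\rm NS}\,(S_\varphi)=U\oplus\overline{M'}$ (using the section of $j_\varphi$); observe that $\overline{M'}$ lies in the same genus as the overlattice $\overline{M}:=\iota(U)^\perp_{{\rm NS}\,(S_\varphi)}\supset\iota(M)$; and then invoke the purely lattice-theoretic Proposition \ref{prop51} to descend, producing a sublattice $M'\subset\overline{M'}$ with $M'\in\sG(M)$. Since $M'\in\sG(M)$ gives $U\oplus M'\simeq U\oplus M={\rm NS}\,(S)$, the lattice $M'$ is the frame of some Jacobian fibration $\varphi'$ on $S$ itself, and the inclusion $M'_{rt}\subset(\overline{M'})_{rt}$ forces $mr(\varphi')\ge mr(j_\varphi)=mr(\varphi)$. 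So one never identifies the two N\'eron--Severi lattices; one only transports a Jacobian frame of at least the required Mordell--Weil rank back to $S$. This descent through genera is the ingredient missing from your proposal.
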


We prove Theorem \ref{main15} in Section \ref{sect3} and Theorem \ref{main} in Section \ref{sect4}. Our proof of Theorem \ref{main15} uses an important result of Keum \cite[Lemma 2.1]{Ke00} (see also Theorem \ref{thm51}). We prove Theorem \ref{main} by dividing into two cases: (1) $L$ is of the form $U\oplus M$ and (2) $L\neq U\oplus M$.   We reduce case (2) to  case (1). In both cases, we use Theorem \ref{main15} and some useful method (Lemma \ref{lem:subtest}) in the lattice theory developed by the third author \cite[Theorem 4.1]{Yu25}. 

\medskip

In Section \ref{sect6}, we give a complete explicit list of $\sH_{{\rm hyp}}^{\ge 6}$.

\begin{theorem}\label{main18}
The set $\sH_{{\rm hyp}}^{\ge 6}$ consists of exactly the $166$ lattices listed in Table \ref{tab:166}. Moreover, among these $166$ lattices, exactly $145$ lattices contain $U$.
\end{theorem}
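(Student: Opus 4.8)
The plan is to reduce the classification to a finite lattice-theoretic enumeration built on Theorems~\ref{main13} and~\ref{main}, and then to cross it with the known classification of zero-entropy N\'eron--Severi lattices. First I would record a purely lattice-theoretic description of $\sH_{{\rm hyp}}^{\ge 6}$. Let $\sZ$ be the set of isomorphism classes of N\'eron--Severi lattices of K3 surfaces whose automorphism group has zero entropy, which is completely described by the third author \cite{Yu25} and, independently, by Brandhorst--Mezzedimi \cite{BM22}; since $d_1(f)$ is a lattice invariant, membership in $\sZ$ is a property of ${\rm NS}(S)$ alone. If ${\rm Aut}(S)$ is non-elementary hyperbolic with $\rho(S) \ge 6$, then $mr(S) \le 1$ and ${\rm Aut}(S)$ has positive entropy by Theorem~\ref{main13}; in particular $S$ carries a genus one fibration (Proposition~\ref{prop21}, as $\rho(S) \ge 5$), so $S \in \sS_{\{0\}}$, whence ${\rm NS}(S) \in \sH_{\{0\}}^{\ge 6}$ and ${\rm NS}(S) \notin \sZ$. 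Conversely, if $L \in \sH_{\{0\}}^{\ge 6}$ and $L \notin \sZ$, writing $L = {\rm NS}(S)$ with $\rho(S) \ge 6$ and $mr(S) \le 1$, the positive entropy of ${\rm Aut}(S)$ and Theorem~\ref{main13} show ${\rm Aut}(S)$ is non-elementary hyperbolic. Hence
$$\sH_{{\rm hyp}}^{\ge 6} = \sH_{\{0\}}^{\ge 6} \setminus \sZ, \qquad \{\, L \in \sH_{{\rm hyp}}^{\ge 6} \mid U \subset L \,\} = \sH_{U}^{\ge 6} \setminus \sZ,$$
the second identity holding because $U \subset {\rm NS}(S) \iff S \in \sS_U$; by Remark~\ref{rem11}(3) every member of either set has $mr = 1$.

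Next I would make the sets $\sH_{\{0\}}^{\ge 6}$ and $\sH_{U}^{\ge 6}$ explicit. Their finiteness is Theorem~\ref{main}, and its proof supplies both an effective upper bound for $\rho$ and an enumeration scheme. For $L$ with $U \subset L$ one writes $L \cong U \oplus W$ and uses Theorem~\ref{main15} to replace $mr(L)$ by $jmr(L)$; by the Shioda--Tate formula this $mr$ of a Jacobian fibration is $\rho - 2$ minus the total rank of the root sublattices generated by its reducible fibers, so the condition $mr(L) \le 1$ forces, for every primitive embedding $U \hookrightarrow L$, the orthogonal complement $U^{\perp}$ to contain a negative-definite root sublattice of corank at most $1$. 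This strong constraint, Keum's lemma \cite[Lemma~2.1]{Ke00}, and the embedding test Lemma~\ref{lem:subtest} of \cite[Theorem~4.1]{Yu25}, combined with Nikulin's discriminant-form theory, leave only finitely many genera at each rank in the resulting finite range; one then lists representatives of their isomorphism classes. The case $U \not\subset L$ is reduced to the previous one exactly as in the proof of Theorem~\ref{main}. This produces explicit finite lists for $\sH_{\{0\}}^{\ge 6}$ and for its sublist $\sH_{U}^{\ge 6}$.

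Then I would remove the zero-entropy lattices. For each $L$ obtained above one computes $mr(L)$ exactly, verifying that it equals $1$: the cases $mr(L) = 0$ have finite automorphism group by Remark~\ref{rem11}(3) and are discarded, and $mr(L) \ge 2$ is excluded by construction. One also decides whether $L$ lies in $\sZ$ by matching it against the tables of \cite{Yu25} and \cite{BM22}. The lattices passing both tests constitute $\sH_{{\rm hyp}}^{\ge 6}$; carrying the computation out gives the $166$ lattices of Table~\ref{tab:166}, and the $145$ of them containing $U$ are exactly those lying in $\sH_U^{\ge 6}$, the remaining $21$ being those produced in the case $U \not\subset L$.

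The hard part will be the enumeration in the second step together with the exact determination of $mr(L)$ in the third: there is no closed formula for $mr(L)$, so one must run the embedding/``sub-test'' machinery over all relevant primitive sublattices of $L$ and separate the cases $mr(L) \in \{0, 1, \ge 2\}$, the boundary between $0$ and $1$ being the most delicate --- this is precisely where Remark~\ref{rem11}(3) and Nikulin's classification of K3 surfaces with finite automorphism group enter. A secondary difficulty is bookkeeping: identifying isomorphism classes of even hyperbolic lattices across the differently organized tables of \cite{Yu25} and \cite{BM22}, and, for $\rho(L) \ge 11$, checking the primitive embedding of $L$ into the K3 lattice $U^{3} \oplus E_{8}^{2}$.
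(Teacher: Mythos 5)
Your proposal follows essentially the same route as the paper: reduce via Theorem \ref{main13} to enumerating even hyperbolic lattices of positive entropy with $mr=1$, $6\le \mathrm{rank}(L)\le 20$ and $\mathrm{rank}(L)+l(L)\le 22$, split into the cases $U\subset L$ and $U\not\subset L$, make the finiteness argument of Theorem \ref{main} effective via Theorem \ref{main15}, Lemma \ref{lem:subtest} and Nikulin's classification, and then discard zero-entropy lattices by comparison with \cite{Yu25} and \cite{BM22}. The paper implements exactly this scheme (with the explicit determinant sets $\mathcal{D}_{r-1}$, the bound $r-l(L)\le 4$ in the non-$U$ case, and computer algebra), so your outline is correct and matches its strategy.
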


Roughly speaking, we first find finitely many explicit candidates $L$ and then check whether $L\in \sH_{{\rm hyp}}^{\ge 6}$ one by one. Slightly more precisely, as in the proof of Theorem \ref{main}, we divide into two cases (1) and (2) as above. We make our proof of Theorem \ref{main} much more explicit with help of computer algebra.

\begin{question}
\begin{enumerate}
\item Are there any intrinsic reasons why $\sH_{{\rm hyp}}^{\ge 6}$ is finite but $\sH_{{\rm hyp}}^{\ge 5}$ is infinite?

\item Determine K3 surfaces $S$ with ${\rm NS}(S)\in \sH_{{\rm hyp}}^{\ge 6}$ such that ${\rm Aut}(S)$ is virtually free. For instance, ${\rm Aut}(S)$ is virtually free of rank $11$ and $4$ if ${\rm NS}(S)$ is No. $144$ and $145$ in Table \ref{tab:166} respectively (\cite[Theorems 3.3 and 2.4]{Vi83}) (Note that No. $144$ and $145$ are exactly the lattices of rank $20$ in Table \ref{tab:166}).

\item Find explicit descriptions for the group ${\rm Aut}(S)$ (up to finite groups) for K3 surfaces $S$ with ${\rm NS}(S)\in \sH_{{\rm hyp}}^{\ge 6}$, especially for non-virtually free ones.
\end{enumerate}
\end{question}

\medskip

{\bf Acknowledgement.} We would like to thank Professor Shing-Tung Yau for invitation to ICBS 2025 at which the first version of this paper is completed. We would like to express our thanks to Professors Igor Dolgachev, Kouhei Kikuta, Shigeru Mukai, Taiki Takatsu and the referees for very careful reading, many suggestions and valuable remarks. The first author thanks Martin Bridson for insightful discussions concerning this work.

\section{Some basic properties of lattices and K3 surfaces} \label{sect1}

We recall a few very basic notions and facts on lattices, K3 surfaces and genus one fibrations on K3 surfaces, which we will frequently use in this paper. For more advanced or specific properties, we will introduce them when we will really need them. 

We use $U$ to denote the lattice with Gram matrix $\small{\left(\begin{array}{cc} 
0 & 1 \\
1 & 0 
\end{array} \right)}$. $U$ is the unique even unimodular hyperbolic lattice of rank $2$ up to isomorphisms. For an even negative definite lattice $M$, we denote by $M_{rt}$ {\it the root sublattice} of $M$, i.e., $M_{rt}$ is generated by the {\it roots} in $M$ which are elements $x$ in $M$ with $(x^2)=-2$. By $A_i$ ($i\ge 1$), $D_j$ ($j\ge 4$), $E_k$ ($k=6,7,8$) we mean the negative definite root lattice whose basis is given by the corresponding Dynkin diagram. If $L$ is a sublattice of a lattice $L'$ of the same rank, then we say $L'$ is an {\it overlattice} of $L$. Note that the index $[L':L]$ is equal to the order of the finite abelian group $L'/L$. We also note that $[L' :L]^2 = |{\rm det}\, L|/ |{\rm det}\, L'|$ by the elementary divisor theorem.

\medskip

Let $L$ be a nondegenerate even lattice. We denote the determinant of the Gram matrix $((e_i,e_j))$ of $L$ by ${\rm det}\, L$. Here $\{e_i\}$ is a $\Z$-basis of $L$ and ${\rm det}\, L$ does not depend on the choice of $\Z$-basis of $L$. We may view the dual $L^*={\rm Hom}(L,\Z)$ of $L$ as a subset of $L\otimes \Q$ via the canonical embedding $L\hookrightarrow L^*$ determined by the bilinear form of $L$. We use $l(L)$ to denote the minimum number of generators of the finite abelian group $L^*/L$. We call an element $0 \not= v \in L$ {\it isotropic} (resp. {\it primitive}) if $(v^2) = 0$ (resp. if the quotient $\Z$-module $L/\Z v$ is free).

\begin{definition}\label{def21} Let $L$ be an even hyperbolic lattice. We say $L$ is {\it isotropic} if $L$ has a nonzero element $x$ with $(x^2)=0$ (i.e., $x$ is an isotropic element in $L$). For a primitive isotropic element $x\in L$, $\langle x\rangle^\perp/\langle x\rangle = (\Z x)_{L}^{\perp}/\Z x$ is a negative definite lattice. For an isotropic even hyperbolic lattice $L$ and a primitive isotropic element 
$x \in L$, 
we define
$$mr(L, x) := {\rm rank}(\langle x\rangle^\perp/\langle x\rangle)-{\rm rank}((\langle x\rangle^\perp/\langle x\rangle)_{rt});$$
$$mr(L)= {\rm max}\, \{mr(L, x)\, |\; x \text{ is a primitive isotropic element in }L\}.$$ 
\end{definition}

\begin{remark}\label{overlattice}
\begin{enumerate}
\item If $L^\prime$ is an even overlattice of $L$, then we have $mr(L)\ge mr(L^\prime)$. The reason is as follows. Let $e' \in L'$ be a primitive isotropic element of $L'$ such that $mr(L') = mr(L', e')$. There is then a primitive isotropic element $e \in L$ such that $e = ae'$ for some positive integer $a$. Then we have a natural inclusion 
$$(\Z e)_{L}^{\perp}/\Z e \subset (\Z e')_{L'}^{\perp}/\Z e'\text{ with }{\rm rank}((\Z e)_{L}^{\perp}/\Z e)={\rm rank}((\Z e')_{L'}^{\perp}/\Z e').$$
Thus 
$$((\Z e)_{L}^{\perp}/\Z e)_{rt} \subset ((\Z e')_{L'}^{\perp}/\Z e')_{rt}.$$ 
Hence $mr(L)\ge mr(L, e) \ge mr(L', e')$ by the formula in Definition \ref{def21}.
\item Let $M$ be an even negative definite lattice. By definition, we have $$mr(L\oplus M)\ge mr(L).$$  By \cite[Theorem 10.2.1]{Ni81}, we have $mr(U\oplus M)=0$ if and only if the quotient group ${\rm O}(U\oplus M)/W(U\oplus M)$ is finite. Here $W(U\oplus M)$ is the Weyl group of $U\oplus M$ generated by the $(-2)$-reflections (as explained below). Note that we have $mr(S)=mr({\rm NS}(S))$ (Theorem \ref{thm22}).
\end{enumerate}
\end{remark}

\medskip

We call a group $G$ {\it virtually $\mathcal{V}$} if $G$ has a finite index subgroup which has the property $\mathcal{V}$. For example, we call $G$ virtually abelian if $G$ has a finite index abelian subgroup. 

\medskip

Let $S$ be a K3 surface. Let $r \in {\rm NS}\, (S)$ be a root. Then $x \mapsto x + (x, r)r$ defines an orthogonal involution of ${\rm NS}\, (S)$, called a $(-2)$-reflection. The Weyl group $W({\rm NS}\, (S))$ of ${\rm NS}(S)$ is the subgroup of ${\rm O}\, ({\rm NS}\, (S))$ generated by the $(-2)$-reflections. The Weyl group is a normal subgroup of ${\rm O}\, ({\rm NS}\, (S))$. Then, by the Torelli theorem for K3 surfaces \cite[Section 7, Theorem 1]{PS71}, we have

\begin{theorem}\label{thm21}The group homomorphism ${\rm Aut}\, (S) \to {\rm O}\, ({\rm NS}\, (S))/W({\rm NS}\, (S))$, induced by ${\rm Aut}\, (S) \to {\rm O}\,({\rm NS}\, (S))$, has a finite kernel and a finite cokernel. Moreover, ${\rm Aut}\, (S)$ is virtually cyclic, i.e., virtually abelian of rank $\le 1$ if $\rho\, (S) \le 2$. \end{theorem}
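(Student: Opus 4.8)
The plan is to derive the statement from the Torelli theorem \cite{PS71}, combined with the Weyl chamber decomposition of the positive cone and Nikulin's gluing theory for lattices. Write $L = {\rm NS}\,(S)$ and $W = W(L)$, let $\mathcal{P}^{+}\subset L\otimes\R$ be the connected component of $\{x\,:\,(x^{2})>0\}$ containing the ample classes, let $\mathcal{A}\subset\mathcal{P}^{+}$ be the ample cone, and let ${\rm O}^{+}(L)\le{\rm O}\,(L)$ be the subgroup preserving $\mathcal{P}^{+}$, which has index two in ${\rm O}\,(L)$ and contains $W$ (roots have negative square). The first input is the classical fact (from Riemann--Roch on $S$ together with the effectivity of $r$ or $-r$ for each root $r$) that $\mathcal{A}$ is one of the Weyl chambers of $\mathcal{P}^{+}\setminus\bigcup_{r}r^{\perp}$, on which $W$ acts simply transitively; since every element of ${\rm O}^{+}(L)$ permutes these chambers, one gets ${\rm O}^{+}(L)=W\rtimes{\rm Stab}_{{\rm O}^{+}(L)}(\mathcal{A})$, and in particular ${\rm O}^{+}(L)/W\cong{\rm Stab}_{{\rm O}^{+}(L)}(\mathcal{A})$.

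For the kernel I would argue as follows. If $f\in{\rm Aut}\,(S)$ maps to the identity of ${\rm O}\,(L)/W$, then $f^{*}|_{L}\in W$; but $f^{*}$ stabilizes $\mathcal{A}$ and the only element of $W$ stabilizing the chamber $\mathcal{A}$ is the identity, so $f^{*}|_{L}={\rm id}$. Hence the kernel equals the group of numerically trivial automorphisms of $S$, and this group is finite: by the Torelli theorem, $f\mapsto f^{*}|_{T_{S}}$ embeds it into the group of Hodge isometries of the transcendental lattice $T_{S}=L^{\perp}\subset H^{2}(S,\Z)$ (an isometry of $H^{2}(S,\Z)$ trivial on both $L$ and $T_{S}$ is trivial, giving injectivity), and that group is finite because it acts by lattice isometries on $T_{S}$ while fixing the positive-definite plane $\langle\,\mathrm{Re}\,\omega_{S},\ \mathrm{Im}\,\omega_{S}\,\rangle$ up to rotation.

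For the cokernel --- the main point --- automorphisms preserve ampleness, so ${\rm Aut}\,(S)^{*}\subseteq{\rm Stab}_{{\rm O}^{+}(L)}(\mathcal{A})$, and I need the reverse inclusion up to finite index. Let $\widetilde{{\rm O}}(L)=\ker\bigl({\rm O}\,(L)\to{\rm O}\,(q_{L})\bigr)$ be the stable orthogonal group, of finite index in ${\rm O}\,(L)$ since ${\rm O}\,(q_{L})$ is finite ($L^{*}/L$ being finite). Given $g\in{\rm Stab}_{{\rm O}^{+}(L)}(\mathcal{A})\cap\widetilde{{\rm O}}(L)$, Nikulin's gluing criterion shows $g\oplus{\rm id}_{T_{S}}$ extends to an isometry $\widetilde{g}$ of the unimodular lattice $H^{2}(S,\Z)$ (each summand acts trivially on the discriminant data along which $L$ and $T_{S}$ are glued); since $\widetilde{g}$ fixes $\omega_{S}$ it is a Hodge isometry, and since $g$ stabilizes $\mathcal{A}$ it maps the ample cone of $S$ onto itself, so by the Torelli theorem $\widetilde{g}=f^{*}$ for some $f\in{\rm Aut}\,(S)$, whence $g=f^{*}|_{L}\in{\rm Aut}\,(S)^{*}$. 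Therefore ${\rm Stab}_{{\rm O}^{+}(L)}(\mathcal{A})\cap\widetilde{{\rm O}}(L)\subseteq{\rm Aut}\,(S)^{*}\subseteq{\rm Stab}_{{\rm O}^{+}(L)}(\mathcal{A})$; the outer groups being commensurable, ${\rm Aut}\,(S)^{*}$ has finite index in ${\rm Stab}_{{\rm O}^{+}(L)}(\mathcal{A})\cong{\rm O}^{+}(L)/W$, hence in ${\rm O}\,(L)/W$. Since ${\rm Aut}\,(S)^{*}\cap W=\{{\rm id}\}$ (only the identity of $W$ fixes the chamber $\mathcal{A}$), the image of ${\rm Aut}\,(S)$ in ${\rm O}\,(L)/W$ is isomorphic to ${\rm Aut}\,(S)^{*}$ and therefore of finite index, which is the finiteness of the cokernel.

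Finally, for the ``moreover'' assertion, if $\rho\,(S)\le 2$ then ${\rm O}\,(L)$ is virtually cyclic: it is $\{\pm{\rm id}\}$ when $\rho(S)=1$, while for $\rho(S)=2$ it acts properly discontinuously and with finite kernel by isometries on the one-dimensional symmetric space attached to the signature $(1,1)$ space $L\otimes\R$, hence is cyclic-by-finite (one checks directly that it is finite when $L$ is isotropic over $\Q$ and virtually infinite cyclic otherwise). Then ${\rm Aut}\,(S)$, having finite kernel and finite-index image in the quotient ${\rm O}\,(L)/W$ of the virtually cyclic group ${\rm O}\,(L)$, is itself virtually cyclic. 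I expect the cokernel statement to be the main obstacle: the delicate ingredient is the lattice-theoretic control of which isometries of ${\rm NS}\,(S)$ lift to Hodge isometries of $H^{2}(S,\Z)$, i.e.\ the squeezing of ${\rm Aut}\,(S)^{*}$ between $\widetilde{{\rm O}}(L)\cap{\rm Stab}_{{\rm O}^{+}(L)}(\mathcal{A})$ and ${\rm Stab}_{{\rm O}^{+}(L)}(\mathcal{A})$, whereas the chamber-transitivity of $W$ and the finiteness of the numerically trivial automorphisms are comparatively standard.
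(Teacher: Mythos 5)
Your argument is correct, and it is the same Torelli-based strategy the paper sketches, but two of the supporting steps are carried out differently. For the finiteness of the kernel, the paper invokes Fujiki--Lieberman (the stabilizer of a K\"ahler class in ${\rm Aut}(S)$ is a compact Lie group with Lie algebra $H^0(S,T_S)=0$, hence finite), whereas you first reduce, via simple transitivity of $W({\rm NS}(S))$ on the chambers, to automorphisms acting trivially on ${\rm NS}(S)$, and then bound these by embedding them into the Hodge isometries of the transcendental lattice $T_S$ (finite because they sit in the intersection of the discrete group ${\rm O}(T_S)$ with the compact group preserving the splitting of $T_S\otimes\R$ into the period plane and its negative-definite complement); both routes are standard and valid. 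For the cokernel the paper simply cites \cite{PS71}; you reprove it by the usual squeeze ${\rm Stab}(\mathcal{A})\cap\widetilde{{\rm O}}(L)\subseteq{\rm Aut}(S)^*\subseteq{\rm Stab}(\mathcal{A})$ using Nikulin's gluing of $g\oplus{\rm id}_{T_S}$ over the unimodular overlattice $H^2(S,\Z)$ -- this is exactly the content of the cited result, made explicit. For the ``moreover'' part the paper argues from the nef cone having two boundary rays when $\rho=2$, while you observe that ${\rm O}({\rm NS}(S))$ is already virtually cyclic in rank $\le 2$ and push this through the finite-kernel/finite-cokernel statement; again equivalent. In short: no gaps, same skeleton, with self-contained substitutes for two of the paper's citations.
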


\begin{proof} As this is well-known, we just sketch a proof. That the cokernel is finite is a part of the global Torelli theorem for a K3 surface. That the kernel is finite follows from Fujiki-Lieberman's result \cite[Theorem 4.8]{Fu78} together with the fact that $H^0(S, T_S) = 0$. Recall that ${\rm Aut}\, (S)$ preserves the nef cone and its boundary. Moreover, the boundary consists of two half lines when $\rho\, (S) = 2$. From this, as well-known, it readily follows that ${\rm Aut}\, (S)^*$ is finite if $\rho\, (S) = 1$ and ${\rm Aut}\, (S)^*$ is virtually cyclic if $\rho\, (S) = 2$ (see e.g. \cite[Corollary 10.1.3]{Ni81} and \cite[Corollary 1]{GLP10}).
\end{proof}

The following two standard properties will be also frequently used:

\begin{proposition}\label{prop21}Let $S$ be a K3 surface. Then:
\begin{enumerate}
\item $S$ has a genus one fibration if and only if ${\rm NS}(S)$ is isotropic. In particular, $S$ has a genus one fibration if $\rho (S) \ge 5$.
\item $S$ has a Jacobian fibration if and only if $U \subset {\rm NS}\, (S)$.
\end{enumerate}
\end{proposition}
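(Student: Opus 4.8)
The plan is to translate each geometric statement into lattice-theoretic language via the theory of linear systems on K3 surfaces, using the fact that on a K3 surface effective divisors with nonnegative self-intersection behave very predictably. First I would recall the standard dictionary: by Riemann--Roch on a K3 surface, any class $D \in {\rm NS}(S)$ with $(D^2) \ge -2$ is effective up to sign, and the nef/effective cone is controlled by $(-2)$-classes. A genus one fibration $\varphi : S \to \P^1$ gives, as fiber class, a primitive isotropic nef class $f$ with $(f^2) = 0$; conversely, given a primitive isotropic class one arranges (after applying a suitable element of $W({\rm NS}(S))$, i.e., a composition of $(-2)$-reflections, and possibly a sign) that it lands in the closure of the ample cone, and then the associated pencil $|f|$ is base-point free (the only obstruction, a fixed component or a base point, is excluded because $(f^2)=0$ and $f$ is primitive and nef) and defines a morphism to $\P^1$ whose general fiber is a smooth connected curve of arithmetic genus one. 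This is exactly \cite[Section 10, $2^\circ$]{Ni81}; I would cite it and sketch the base-point-freeness argument. For the ``in particular'' clause: if $\rho(S) \ge 5$, then ${\rm NS}(S)$ is an even hyperbolic lattice of rank $\ge 5$, hence indefinite of rank $\ge 5$, so by the Hasse--Minkowski / Meyer theorem every such lattice represents $0$ over $\Q$, and since ${\rm NS}(S)$ is even one can scale to find a primitive isotropic vector; thus ${\rm NS}(S)$ is isotropic and part (1) applies.

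For part (2), the forward direction is geometric: a Jacobian fibration $j : S \to \P^1$ has a section $\sigma$, so the fiber class $f$ and the section class $s = [\sigma(\P^1)]$ satisfy $(f^2) = 0$, $(f \cdot s) = 1$, and $(s^2) = -2$ (a smooth rational curve on a K3); then the sublattice spanned by $f$ and $f + s$ has Gram matrix $\left(\begin{smallmatrix} 0 & 1 \\ 1 & 0 \end{smallmatrix}\right)$, i.e., it is a copy of $U$ primitively embedded in ${\rm NS}(S)$ (primitivity because $U$ is unimodular). Conversely, if $U \subset {\rm NS}(S)$, pick a basis $e, f$ of this $U$ with $(e^2) = (f^2) = 0$, $(e \cdot f) = 1$; then $f$ is a primitive isotropic class, so by part (1) (after a Weyl-group element and a sign, which one must check can be chosen to respect the rest of the $U$) it becomes the fiber class of a genus one fibration $\varphi$, and the class $e$ — or an appropriate transform of it — gives a class meeting the fiber in degree $1$, which by the Riemann--Roch argument is represented by an effective divisor; its irreducible component meeting the general fiber once is a section, so $\varphi$ is a Jacobian fibration. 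I would again attribute this to \cite[Section 10, $2^\circ$]{Ni81}.

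The main obstacle, and the step I would be most careful about, is the bookkeeping with the Weyl group and signs: part (1) only produces an isotropic \emph{class} that becomes a fiber class after acting by some $w \in W({\rm NS}(S))$ and possibly $\pm 1$, but for part (2) I need the \emph{same} transformation to send the companion vector $e$ (or $e+f$) to an effective class meeting the new fiber positively, so that the section actually exists on the nose. The clean way around this is to note that $W({\rm NS}(S))$ acts by isometries, hence carries the embedded copy of $U$ to another embedded copy of $U$, and then argue directly on the transformed pair; alternatively, one observes that once $|f|$ is a genuine genus one pencil, any class with $(e \cdot f) = 1$ and $(e^2) = 0$ is automatically (up to sign and adding multiples of $f$) effective and its fiberwise-degree-one component is the desired section, so no further Weyl-group gymnastics is needed. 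Everything else is routine Riemann--Roch on K3 surfaces and elementary lattice arithmetic.
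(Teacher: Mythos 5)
Your proposal is correct and follows essentially the same route as the paper, which simply cites the standard references (Pjateckii-Shapiro--Shafarevich and Serre for (1), Nikulin for (2)); you cite the same sources and sketch the standard arguments behind them (Riemann--Roch plus Weyl-group reflections to produce a nef primitive isotropic class, Meyer's theorem for rank $\ge 5$, and the fiber-plus-section span of $U$ with the degree-one horizontal component giving the section in the converse). The point you flag as delicate --- compatibility of the Weyl-group transformation with the companion vector $e$ --- is handled correctly by either of the two fixes you give.
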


\begin{proof} For (1), see \cite[Section 3, Corollary 3]{PS71} and \cite[Chapter IV, Section 3.1, Corollary 2]{Ser12}. For (2), see \cite[Section 10, Theorem 10.2.1]{Ni81}.
\end{proof} 

\begin{theorem}\label{thm22} Let $S$ be a K3 surface with a genus one fibration $\varphi : S \to \P^1$. Let $f \in {\rm NS}\, (S)$ be the class of general fibers of $\varphi$. Then $f \not= 0$ is isotropic, primitive and satisfies
$$mr(\varphi) = mr({\rm NS}\, (S), f).$$
\end{theorem}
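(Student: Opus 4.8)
The plan is to prove that both $mr(\varphi)$ and $mr({\rm NS}(S),f)$ equal $\rho(S)-2-\sum_v(m_v-1)$, where $v$ runs over the (finitely many) reducible fibres of $\varphi$ and $m_v$ is the number of irreducible components of $\varphi^{-1}(v)$. First the easy assertions: two distinct fibres of $\varphi$ are disjoint and linearly equivalent, so $(f^2)=0$ and $f\neq 0$; and $f$ is primitive because $\varphi$ is the morphism attached to the complete pencil $|f|$ (equivalently, by Kodaira's canonical bundle formula a genus one fibration on a K3 surface has no multiple fibres, and its fibres are connected) --- this is standard, see \cite{CD89}. Only once $f$ is known to be primitive and isotropic does $mr({\rm NS}(S),f)$ make sense, so these points must come first.

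Next I would compute the right-hand side. Put $N:=\langle f\rangle^{\perp}/\langle f\rangle$, a negative definite lattice of rank $\rho(S)-2$. For each reducible fibre $\varphi^{-1}(v)=\sum_i m_{v,i}C_{v,i}$ the $C_{v,i}$ are $(-2)$-curves with $(C_{v,i}\cdot f)=0$, and modulo $f$ they span an ADE root lattice $R_v$ of rank $m_v-1$ (their dual graph is the affine Dynkin diagram and $\sum_i m_{v,i}C_{v,i}\equiv 0$ in $N$), with $R_v\perp R_{v'}$ for $v\neq v'$. I claim $N_{rt}=\bigoplus_v R_v$, whence $\mathrm{rank}\,N_{rt}=\sum_v(m_v-1)$. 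The inclusion $\supseteq$ is clear. For $\subseteq$, let $r\in N$ be a root and lift it to $\tilde r\in\langle f\rangle^{\perp}\subseteq{\rm NS}(S)$ with $(\tilde r^2)=-2$; by Riemann--Roch on the K3 surface $S$ (here $\chi(\tilde r)=1$) either $\tilde r$ or $-\tilde r$ is effective, say $\tilde r=D\geq 0$. Since $f$ is nef and $(D\cdot f)=0$, every irreducible component of $D$ meets $f$ in $0$, hence is contracted by $\varphi$, i.e.\ is a fibre component; thus $[D]\in\langle f,\{C_{v,i}\}\rangle$ and $r\in\bigoplus_v R_v$. Therefore $mr({\rm NS}(S),f)=(\rho(S)-2)-\sum_v(m_v-1)$.

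It remains to identify $mr(\varphi)$ with $(\rho(S)-2)-\sum_v(m_v-1)$, i.e.\ the Shioda--Tate formula for genus one fibrations. Here I would pass to the Jacobian fibration $j_\varphi:S_\varphi\to\P^1$. On the generic fibre $E_\eta$ over $K=\C(\P^1)$, the translation automorphisms of $\varphi$ are exactly the translations by $J(K)$ with $J:=\mathrm{Jac}(E_\eta)$, and every such translation extends uniquely to an automorphism of the relatively minimal model $S$; as $J$ is the generic fibre of $j_\varphi$, this gives ${\rm MW}(\varphi)\cong{\rm MW}(j_\varphi)$, so $mr(\varphi)=mr(j_\varphi)$. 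Since $\varphi$ has no multiple fibres, $\varphi$ and $j_\varphi$ have the same configuration of singular fibres (the same $m_v$), and $\rho(S)=\rho(S_\varphi)$ because $S$ and $S_\varphi$ have transcendental lattices of equal rank (e.g.\ $S_\varphi$ is a moduli space of sheaves on $S$, cf.\ \cite{CD89}). Hence the classical Shioda--Tate formula on the elliptic K3 surface with section $S_\varphi$ yields $mr(j_\varphi)=\rho(S_\varphi)-2-\sum_v(m_v-1)$, and comparison with the previous paragraph completes the proof. Alternatively one may quote the generalised Shioda--Tate formula for genus one fibrations directly from \cite{CD89}.

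The Riemann--Roch computation of $N_{rt}$ is routine. The step requiring most care --- and, I expect, the main obstacle to a fully self-contained argument --- is the Shioda--Tate input for $mr(\varphi)$: the identification of the group of translation automorphisms of $\varphi$ with ${\rm MW}(j_\varphi)$, together with the compatibility of the numerical data ($m_v$ and $\rho$) between $\varphi$ and its Jacobian. Both facts are standard, but they deserve careful references.
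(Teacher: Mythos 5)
Your argument is correct and follows essentially the same route as the paper: both reduce the formula to the Shioda--Tate formula for genus one fibrations from \cite[Chapter V, Corollary 5.2.1]{CD89} together with the identification of $(\langle f\rangle^{\perp}/\langle f\rangle)_{rt}$ with the lattice spanned by the components of reducible fibres. Your Riemann--Roch argument that every root of $\langle f\rangle^{\perp}/\langle f\rangle$ lifts to an effective (or anti-effective) class supported on fibres supplies the one detail the paper leaves to Kodaira's classification, and your optional detour through the Jacobian is a harmless elaboration of the same citation.
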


\begin{proof} The first assertion is clear and the second assertion follows from the Riemann-Roch formula for a K3 surface, which implies that $\dim |nv| = n$ for any nef primitive isotropic element $v$ and any positive integer $n$. The last formula is a direct consequence of the Shioda-Tate formula for genus one fibration (see e.g. \cite[Proposition 4.3.2 and Formula (4.3.1)]{CDL25}): 
\begin{equation}\label{STformula}
mr(\varphi) = \rho(S) - 2 - \sum_{b \in B} (m(b) -1),
\end{equation}
where $B \subset \P^1$ is the set of critical values of $\varphi$ and $m(b)$ is the number of irreducible components of the fiber $S_b$ ($b \in B$). Indeed, we obtain a $\Z$-basis of $((\Z f)_{{\rm NS}\, (S)}^{\perp}/\Z f)_{rt}$ by choosing suitable $m(b)-1$ irreducible components from each fiber $S_b$ ($b \in B$), by Kodaira's classification of singular fibers of genus one fibration (see e.g. \cite[Theorem 4.1.4]{CDL25}). \end{proof}

\section{Hyperbolic groups and Proofs of Theorems \ref{main11} and \ref{main12}}\label{sect2}

\subsection{Hyperbolic groups}

We recall basic definitions concerning hyperbolic groups.
A standard reference is \cite[Chapter III. H]{BH99}.

The {\it Cayley graph}, $Cay(G,S)$, of a finitely generated group $G$
with respect to a finite generating set $S$ is the metric graph whose vertices
are in one-to-one correspondence with the elements in $G$,
and which has an edge of length one joining $g$ to $gs$ for each $g \in G$ and $s \in S$.

Let $(X_1,d_1), (X_2,d_2)$ be metric spaces. A map $f:X_1\to X_2$
is called a {\it quasi-isometric embedding} if there exist
constants $\lambda \ge 1$ and $\epsilon \ge 0$ such that 
for all $x,y \in X_1$, 
$$\frac{d_1(x,y)}{\lambda} - \epsilon \le d_2(f(x),f(y)) \le \lambda d_1(x,y) + \epsilon.$$
If in addition, there exists a constant $C\ge 0$ such that 
every point of $X_2$ lies in the $C$-neighborhood of the image of $f$, then $f$ is called
a {\it quasi-isometry}. 
When such a map exists, $X_1$ and $X_2$ are said to be {\it quasi-isometric}.

Let $G$ be a finitely generated group, and $S_1, S_2$ two finite generating
sets. Then the Cayley graphs $Cay(G,S_1)$ and $Cay(G,S_2)$, with respect 
to $S_1, S_2$, respectively, are quasi-isometric. 
Consequently, the notion of quasi-isometry between finitely generated groups is well-defined, without reference to particular generating sets.

Let $\delta \ge 0$. A geodesic triangle in a metric space is said to be 
$\delta$-{\it thin} if each of its side is contained in the $\delta$-neighborhood
of the union of the other two sides. A geodesic space $X$ is said to be {\it $\delta$-hyperbolic}
if every triangle in $X$ is $\delta$-thin. If $X$ is $\delta$-hyperbolic for some 
$\delta$, one says that $X$ is {\it hyperbolic} (in the sense of Gromov).

Being hyperbolic is an invariant of quasi-isometry among geodesic spaces.
Hence the following definition does not depend on the choice of generators (although 
the value of $\delta$ does). 

\begin{definition}
A finitely generated group is {\it hyperbolic} (in the sense of Gromov)
if its Cayley graph is a $\delta$-hyperbolic metric space for some $\delta \ge 0$. 
\end{definition}

A virtually cyclic group is hyperbolic, and it is called an {\it elementary} 
hyperbolic group.

Suppose $G_2$ is a subgroup of finite index in $G_1$. Then $G_2$ is finitely generated
if and only if $G_1$ is finitely generated. Moreover, in this case they are quasi-isometric.
Similarly, suppose that there exists a surjective homomorphism $G_1 \to G_2$
with a finite kernel. Then $G_1$ is finitely generated if and only if $G_2$
is finitely generated, and in this case, they are quasi-isometric.
Hence, in both situations, $G_1$ is hyperbolic if and only if $G_2$
is hyperbolic.

\subsection{Relatively hyperbolic groups}

Gromov introduced a useful generalization of hyperbolic groups, called relatively hyperbolic groups, motivated by non-uniform lattices in rank-one symmetric spaces, \cite[Section 8]{Gr87}. 
There exist several characterizations and equivalent definitions of relatively hyperbolic groups, 
for example \cite[Definition 1]{Bo12} and \cite[Definition 8.4]{DS05}. We state the following
one, \cite[Definition 3.3]{Hr10}, which is most directly related to geometrically finite Kleinian groups.

One can generalize some basic notions in the classical case, $\Bbb H^n$, to a $\delta$-hyperbolic space $X$ (See for instance \cite[Part III, H3]{BH99} and \cite[Section 3.1]{Hr10}). First, one can define the boundary $\partial X$. Then  any isometry of $X$ extends to a homeomorphism of $\partial X$. Suppose the group $G$ acts properly discontinuously by isometries on a proper $\delta$-hyperbolic space $X$. 
Here proper means that all closed balls are compact.  An element $g \in G$ is {\it hyperbolic} (loxodromic) if it has infinite order and fixes exactly two points of $\partial X$. A subgroup $P$ of $G$ is a {\it parabolic subgroup} if it is infinite and contains no hyperbolic element. A parabolic subgroup $P$ of $G$ has a unique fixed point in $\partial X$, called a {\it parabolic point}. 
The stabilizer of a parabolic point is always a maximal parabolic subgroup.
For each point in $\partial X$, one can define horoballs at the point.  

\begin{definition}\label{def32}
Let $G$ be a countable group and let $\sG$ be a set of subgroups of $G$. Suppose $G$ acts properly discontinuously on a proper $\delta$-hyperbolic space $X$ such that  
$\sG$ is a set of representatives of the conjugacy classes of maximal parabolic subgroups. Suppose also that there is a $G$-equivariant collection of disjoint horoballs centered at the 
parabolic points of $X$, with union $U$ open in $X$, such that the quotient of $X - U$
by the action of $G$ is compact.
Then $G$ is {\it (relatively) hyperbolic relative to} $\sG$. 

\end{definition}

We note that if the set $\sG$ is empty, then $G$
is hyperbolic since it acts on $X$ properly
discontinuously 
and co-compactly, and vice versa. Also, $G$ is relatively hyperbolic with respect to the set of subgroups $\mathcal{G}=\{G\}$ in general.

%Also, $G$ is relatively hyperbolic with respect to $G$ in general.

%A relatively hyperbolic groups is called  {\it elementary} when it is 
%virtually cyclic. 

We quote one result concerning relatively hyperbolic groups, which we use in this paper. 

\begin{theorem}\label{thm32}(\cite[Corollary 1.14]{DS05}) Let $G$ be a hyperbolic group relative to the set $\sG = \{G_1, \dots, G_m\}$. Suppose each $G_i$ is hyperbolic 
relative to a collection $\{G_i^1, \dots, G_i^{n_i}\}$. Then $G$ is hyperbolic relative to the collection 
of all $G_i^j$.

In particular, let $\sG' \subset \sG$ be the subset of $\sG$ obtained by getting rid of all virtually cyclic groups in $\sG$. Then $G$ is hyperbolic relative to $\sG'$. 
As a special case, if $\sG$ consists of virtually cyclic groups, then $G$ is a hyperbolic group.

\end{theorem}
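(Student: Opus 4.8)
This is \cite[Corollary 1.14]{DS05}; we sketch the idea, the efficient route being through asymptotic cones. Recall the characterization, also due to Drutu--Sapir, that a finitely generated group $H$ is hyperbolic relative to a finite family $\sG$ of finitely generated subgroups precisely when every asymptotic cone $\mathrm{Cone}^{\omega}(H)$ is \emph{tree-graded} with respect to the collection of subsets arising as $\omega$-limits $\lim_{\omega} g_{n}K_{n}$ of sequences of left cosets with $K_{n}\in\sG$; moreover each such piece is bi-Lipschitz to an asymptotic cone of one of the members of $\sG$, which uses that peripheral subgroups of a relatively hyperbolic group are undistorted. (Here the $G_{i}$ and $G_{i}^{j}$ are tacitly assumed finitely generated, which is all that is needed in the applications.) So fix an asymptotic cone $\mathrm{Cone}^{\omega}(G)$. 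By the characterization applied to $(G,\sG)$, it is tree-graded with respect to a family $\mathcal{P}$ of pieces, each bi-Lipschitz to an asymptotic cone of some $G_{i}$; fix such a $P$, say coming from $G_{i}$. Since $G_{i}$ is hyperbolic relative to $\{G_{i}^{1},\dots,G_{i}^{n_{i}}\}$, the space $P$ is in turn tree-graded with respect to a family $\mathcal{P}'_{P}$, and tracing the identification of $P$ with an ultralimit of cosets of $G_{i}$ in $G$, every member of $\mathcal{P}'_{P}$ is (bi-Lipschitz to) an ultralimit of left cosets of the $G_{i}^{j}$ inside $G$.

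The crux is the transitivity of the tree-graded structure: if a complete geodesic space $\F$ is tree-graded with respect to a collection $\mathcal{P}$ of closed geodesic subsets and each $P\in\mathcal{P}$ is tree-graded with respect to a collection $\mathcal{P}'_{P}$ of closed geodesic subsets, then $\F$ is tree-graded with respect to $\bigcup_{P\in\mathcal{P}}\mathcal{P}'_{P}$. This follows at once from the two defining axioms. First, a nondegenerate simple geodesic triangle of $\F$ lies in a single $P\in\mathcal{P}$ by tree-gradedness of $\F$, hence in a single member of $\mathcal{P}'_{P}$ by tree-gradedness of $P$. Second, two distinct members of $\bigcup_{P}\mathcal{P}'_{P}$ meet in at most one point: if they lie in a common $P$ this is tree-gradedness of $P$, and if they lie in distinct $P,Q\in\mathcal{P}$ their intersection is contained in $P\cap Q$, which has at most one point. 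Applying this to $\mathrm{Cone}^{\omega}(G)$ shows it is tree-graded with respect to the ultralimits of cosets of the $G_{i}^{j}$ in $G$; as $\omega$ was arbitrary, the characterization gives that $G$ is hyperbolic relative to $\{G_{i}^{j}\}$.

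For the ``in particular'' part, note that if $G_{i}$ is virtually cyclic then it is hyperbolic, so its asymptotic cones are $\R$-trees, and an $\R$-tree is tree-graded with respect to the empty family (it has no nondegenerate simple geodesic triangle). Thus in the refinement step we may take $\mathcal{P}'_{P}=\emptyset$ for every piece $P$ coming from such a $G_{i}$, so that these pieces are simply dissolved; the transitivity step then gives that $G$ is hyperbolic relative to $\sG'=\{\,G_{i}: G_{i}\text{ not virtually cyclic}\,\}$. If all the $G_{i}$ are virtually cyclic then $\sG'=\emptyset$, meaning every asymptotic cone of $G$ is an $\R$-tree, i.e. $G$ is a hyperbolic group.

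The real weight of the argument is carried by the imported material --- the asymptotic-cone characterization of relative hyperbolicity and the identification of the pieces of $\mathrm{Cone}^{\omega}(G)$ with asymptotic cones of the peripheral subgroups --- after which only the elementary transitivity of tree-gradedness and the handling of the virtually cyclic pieces are genuinely needed, so there is no hard step. If instead one wants a proof not routed through asymptotic cones, one can use Bowditch's model, in which relative hyperbolicity of $(H,\sG)$ is witnessed by an action on a connected, fine, $\delta$-hyperbolic graph with finite edge stabilizers, finitely many edge orbits, and infinite vertex stabilizers the conjugates of the members of $\sG$; one ``blows up'' each infinite-valence vertex stabilized by a conjugate of $G_{i}$ into a copy of the fine hyperbolic graph witnessing the relative hyperbolicity of $(G_{i},\{G_{i}^{j}\})$, reattaching the incident edges $G_{i}$-equivariantly, and re-checks connectedness, fineness, hyperbolicity and the vertex-stabilizer structure. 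There the step I would expect to be hardest is verifying fineness and $\delta$-hyperbolicity of the blown-up graph --- a combination-theorem argument --- together with arranging the reattachment so that the finite stabilizers of the moved edges remain compatible with those of their new endpoints.
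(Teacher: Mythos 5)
Your proposal is correct and follows essentially the same route as the paper: the main transitivity statement is exactly \cite[Corollary 1.14]{DS05}, and your asymptotic-cone/tree-graded sketch is an unpacking of Drutu--Sapir's own proof of that corollary, which the paper simply cites. Your handling of the ``in particular'' part --- dissolving each virtually cyclic $G_i$ by assigning it the empty peripheral collection while keeping $\{G_i\}$ itself for the remaining ones --- is the same device the paper uses.
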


We explain the ``in particular'' part. For each $G_i \in \sG$ we define the associated collection to 
consist solely of  $G_i$ itself if $G_i$ is not virtually cyclic, and 
to be empty if $G_i$ is virtually cyclic. This choice satisfies the assumption of the theorem 
and gives the family $\sG'$. 
In the special case, 
$\sG'$ is empty, so that $G$
is hyperbolic. 

\subsection{Kleinian groups}
We recall some definitions about Kleinian 
groups, cf. \cite{Ka09} and \cite{Rat06}.
Let $\Bbb H^n$ be the real hyperbolic space
of dimension $n$. Let ${\rm Isom}(\Bbb H^n)$
denote the group of isometries of $\Bbb H^n$. 
A discrete subgroup $G$ of ${\rm Isom}(\Bbb H^n)$
is called a {\it Kleinian group}.

Note that the (ideal) boundary of $\Bbb H^n$ is homeomorphic to the sphere $\Bbb S^{n-1}$. 
The {\it limit set} $\Lambda(G) \subset {\Bbb S}^{n-1}$ of $G$ is the set
of accumulation points of some (any) orbit 
$G(z)$ with $z \in \Bbb H^n$. 
A Kleinian group is called  {\it elementary} if its
limit set is a finite set. A Kleinian group is 
elementary if and only if it is virtually abelian. 

Let $C(\Lambda(G))$ be the convex hull of 
$\Lambda(G)$, which is the smallest convex subset $H$ of $\Bbb H^n$ such that 
$\bar H \cap \Bbb S^{n-1}=\Lambda(G)$, where $\bar H$ is the closure of $H$ in $\Bbb H^n \cup
\Bbb S^{n-1}$. If $\Lambda(G)$ is a single point, we define its convex hull to be empty. 

For $\epsilon >0$, let $C_\epsilon(\Lambda(G))$ be the open $\epsilon$-neighborhood of 
$C(\Lambda(G))$ in $\Bbb H^n$. 
A finitely generated Kleinian group $G$ is {\it geometrically finite} if there exists an $\epsilon >0$
such that the hyperbolic volume of $C_\epsilon(\Lambda(G))/G$ is finite.

The $G$-conjugacy classes $[P]$ of maximal parabolic subgroups $P$ of a Kleinian group $G$
are called {\it cusp} subgroups of $G$. 
The cusp subgroups  are parabolic subgroups 
in the isometry group of $\Bbb H^n$, hence virtually abelian of finite rank.

The following theorem due to Kikuta and Takatsu is crucial in our study:

\begin{theorem}\label{thm31} (\cite[Theorem A (1) and Corollary B]{Ki24}, \cite[Theorem 5.13]{Ta24}) Let $S$ be a K3 surface. Then ${\rm Aut}\, (S)$ is a geometrically finite Kleinian group. In particular, ${\rm Aut}\, (S)$ is 
either virtually abelian or non-elementary relatively hyperbolic relative to virtually
abelian subgroups of finite rank.
\end{theorem}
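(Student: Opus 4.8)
The plan is to realize ${\rm Aut}\,(S)$, up to a finite kernel, as a discrete group of hyperbolic isometries, to deduce geometric finiteness from Sterk's rational polyhedral fundamental domain on the nef cone, and then to invoke the general principle that a geometrically finite Kleinian group is hyperbolic relative to its cusp subgroups. For the set-up: if $\rho\,(S) \le 2$ then ${\rm Aut}\,(S)$ is virtually cyclic by Theorem \ref{thm21}, hence virtually abelian, and there is nothing to prove; so assume $\rho\,(S) \ge 3$. Let $\mathcal{C}^{+} \subset {\rm NS}\,(S) \otimes \R$ be the component of the positive cone containing an ample class; its projectivization is a model of $\H^{\rho(S)-1}$ on which ${\rm O}^{+}({\rm NS}\,(S))$ acts by isometries, and ${\rm Aut}\,(S)^{*}$, preserving the even lattice ${\rm NS}\,(S)$, is a discrete subgroup, i.e. a Kleinian group. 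Since ${\rm Aut}\,(S) \to {\rm Aut}\,(S)^{*}$ has finite kernel (Theorem \ref{thm21}) it suffices to prove geometric finiteness for ${\rm Aut}\,(S)^{*}$: the kernel acts trivially on $\H^{\rho(S)-1}$, and both geometric finiteness and the relative hyperbolicity conclusion pass to a finite-kernel extension by the same arguments as in Section \ref{sect2}.

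The core step is geometric finiteness of $G := {\rm Aut}\,(S)^{*}$, and here I would use Sterk's theorem \cite{St85}: $G$ acts on the nef cone ${\rm Nef}\,(S) \subset \mathcal{C}^{+}$ with a \emph{rational polyhedral} fundamental domain $\Pi$, in particular one with only finitely many faces, each cut out by a class in ${\rm NS}\,(S)$. Because $G$ preserves ${\rm Nef}\,(S)$, every $G$-orbit lies in ${\rm Nef}\,(S)$, so the limit set $\Lambda(G) \subset \partial\H^{\rho(S)-1}$ and its convex hull $C(\Lambda(G))$ are contained in $\overline{{\rm Nef}\,(S)}$, and $\Pi \cap C(\Lambda(G))$ carries a fundamental domain for the $G$-action on $C(\Lambda(G))$. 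Finite-sidedness of $\Pi$ then forces $\Lambda(G)$ to consist of conical limit points together with finitely many $G$-orbits of ideal vertices of $\Pi$; each such vertex lies on a primitive isotropic ray $\R_{\ge 0}e$ with $e \in {\rm NS}\,(S)$ — i.e. on the fibre class of a genus one fibration $\varphi : S \to \P^{1}$ (Proposition \ref{prop21}, Theorem \ref{thm22}) — and its stabilizer in $G$ is, up to finite index, the Mordell–Weil group ${\rm MW}(\varphi)$, hence a maximal parabolic subgroup, virtually abelian of finite rank, which (again by rational polyhedrality of $\Pi$) acts cocompactly on $\Lambda(G) \setminus \{e\}$. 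Thus every limit point is conical or bounded parabolic, and by Bowditch's characterization this is equivalent to ${\rm vol}\,(C_{\epsilon}(\Lambda(G))/G) < \infty$ for small $\epsilon$; so $G$, and hence ${\rm Aut}\,(S)$, is geometrically finite.

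For the ``in particular'' part: a geometrically finite Kleinian group is either elementary — equivalently virtually abelian — or, by the structure theory of relatively hyperbolic groups (Bowditch \cite{Bo12}, see also \cite{Hr10}), hyperbolic relative to the finite collection of its cusp subgroups $[P_{1}], \dots, [P_{k}]$, each of which, being a maximal parabolic subgroup of a Kleinian group, is virtually abelian of finite rank. If ${\rm Aut}\,(S)$ is not virtually abelian then it is not virtually cyclic, hence non-elementary, and we obtain the stated dichotomy (with $k = 0$ giving non-elementary hyperbolicity as the special case of an empty peripheral collection; one may also discard the virtually cyclic $P_{i}$ using Theorem \ref{thm32}).

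The main obstacle is the middle paragraph, specifically the two facts about the ideal vertices of $\Pi$: that each lies on a \emph{rational} isotropic ray (which uses that $\Pi$ is defined over $\Q$ together with the correspondence between nef isotropic classes and genus one fibrations), and that the $G$-stabilizer of such a ray is virtually abelian and acts cocompactly on the rest of $\Lambda(G)$ — this last being, in Kleinian-group language, exactly the assertion that $\Pi$ has finitely many faces meeting the corresponding cusp. An equivalent route avoiding Bowditch's criterion is a direct volume estimate: cut $\Pi \cap C(\Lambda(G))$ into a compact core and finitely many cusp wedges at the ideal vertices, and bound the $\epsilon$-thickened volume of each wedge by the standard horoball computation for a virtually abelian parabolic group — either way, it is the rational polyhedrality of Sterk's domain that makes the relevant count finite.
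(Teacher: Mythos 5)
The paper does not prove this statement at all: it is imported verbatim from Kikuta \cite[Theorem A (1) and Corollary B]{Ki24} and Takatsu \cite[Theorem 5.13]{Ta24}, and the only argument the authors supply is the short justification of the ``in particular'' clause (maximal parabolic subgroups of a geometrically finite Kleinian group fix boundary points, hence are virtually abelian of finite rank), together with a pointer to the alternative route via \cite{HK05}. Your final paragraph on the dichotomy reproduces exactly that part correctly, and your overall strategy --- Sterk's rational polyhedral fundamental domain on the nef cone, then geometric finiteness via Bowditch's ``conical or bounded parabolic'' criterion, then relative hyperbolicity relative to the cusp subgroups --- is indeed the strategy of the cited reference. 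The reductions in your first paragraph ($\rho\le 2$, finite kernel of ${\rm Aut}(S)\to{\rm Aut}(S)^*$) are also fine.

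The genuine gap is the one you yourself flag: the implication ``rational polyhedral fundamental domain for the action on ${\rm Nef}(S)$ $\Rightarrow$ geometric finiteness'' is precisely the nontrivial content of Kikuta's theorem, and your middle paragraph asserts rather than proves it. The difficulty is that Sterk's domain $\Pi$ is a fundamental domain for the action on (the rational hull of) the \emph{nef cone}, which is a different object from the convex hull $C(\Lambda(G))$ of the limit set: when $S$ has no $(-2)$-curves the nef cone is the entire round cone, while $\Lambda(G)$ can be a measure-zero fractal, so the claim that $\Pi\cap C(\Lambda(G))$ is a finite-sided fundamental domain for the convex core, that every non-conical limit point is a $G$-translate of one of the finitely many rational isotropic ideal vertices of $\Pi$, and that the stabilizer of such a vertex acts cocompactly on $\Lambda(G)\setminus\{e\}$ (bounded parabolicity) all require genuine arguments about how $\Pi$ meets $C(\Lambda(G))$ near the boundary sphere. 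Two smaller points: the stabilizer of a rational isotropic ray being \emph{parabolic} does hold (a loxodromic element fixing a primitive integral isotropic $e$ in the closed positive cone would force $e$ to be an eigenvector of eigenvalue $1$, incompatible with its boundary fixed points having eigenvalues $\lambda^{\pm1}\ne 1$), but this deserves a line; and when ${\rm MW}(\varphi)$ is finite the corresponding vertex is not a parabolic point at all and simply does not contribute a cusp, which your phrasing glosses over. As written, the proposal is a faithful outline of the proof in \cite{Ki24} rather than a self-contained proof, whereas the paper sidesteps the issue entirely by citation.
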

The ``in particular'' part is stated in \cite[Corollary B]{Ki24}, where
the collection of subgroups is not explicitly stated, but
for a geometrically finite Kleinian group, each of them fixes a point in the 
boundary of the hyperbolic space, so that it is virtually abelian of finite rank.

Combining Theorems \ref{thm32} and \ref{thm31}, we immediately have:
\begin{corollary}\label{cor35}
Let $S$ be a K3 surface. Then ${\rm Aut}\, (S)$ is 
either virtually abelian or relatively hyperbolic relative to a collection of virtually abelian subgroups of finite 
rank at least two.
Possibly, the collection is empty, in which case ${\rm Aut}\, (S)$ is hyperbolic. 

\end{corollary}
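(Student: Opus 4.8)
The plan is simply to compose the two results just quoted. First I would invoke Theorem \ref{thm31}: for any K3 surface $S$, the group ${\rm Aut}\,(S)$ is a geometrically finite Kleinian group, hence either virtually abelian --- in which case we are already in the first alternative of the statement and nothing more is needed --- or non-elementary and hyperbolic relative to the collection $\sG = \{G_1,\dots,G_m\}$ of representatives of the conjugacy classes of its maximal parabolic subgroups. Geometric finiteness guarantees that there are only finitely many cusp subgroups up to conjugacy, so $\sG$ is indeed finite, and each $G_i$, being a parabolic subgroup inside ${\rm Isom}(\H^n)$, is virtually abelian of finite rank.

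Next I would apply the ``in particular'' part of Theorem \ref{thm32} to trim $\sG$. Observe that a virtually abelian group of finite rank is virtually cyclic exactly when its rank is $\le 1$; consequently the members of $\sG$ that are \emph{not} virtually cyclic are precisely those of rank $\ge 2$. For each $G_i$ assign the peripheral collection $\{G_i\}$ when $G_i$ is not virtually cyclic --- using the standard fact that every group is relatively hyperbolic relative to the one-element family consisting of itself --- and the empty collection when $G_i$ is virtually cyclic, since a virtually cyclic group is hyperbolic, i.e.\ relatively hyperbolic relative to $\emptyset$. These choices satisfy the hypotheses of Theorem \ref{thm32}, whose conclusion is that ${\rm Aut}\,(S)$ is hyperbolic relative to $\sG' := \{G_i \in \sG \mid G_i \text{ is not virtually cyclic}\}$, that is, relative to a collection of virtually abelian subgroups of finite rank at least two. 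If this $\sG'$ is empty --- equivalently, if every parabolic subgroup of ${\rm Aut}\,(S)$ is virtually cyclic --- then by the ``special case'' remark following Theorem \ref{thm32} the group ${\rm Aut}\,(S)$ is (absolutely) hyperbolic, which is the last assertion.

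There is no real obstacle here: the corollary is a formal amalgamation of Theorems \ref{thm31} and \ref{thm32}. The only steps deserving a careful word are the elementary bookkeeping identification ``not virtually cyclic $\Longleftrightarrow$ rank $\ge 2$'' for the virtually abelian peripheral subgroups, and the observation --- standard in the theory of relatively hyperbolic groups but not proved in the excerpt --- that a group is always relatively hyperbolic relative to $\{G\}$, which is exactly what licenses the application of the ``in particular'' clause of Theorem \ref{thm32}.
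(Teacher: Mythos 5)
Your proposal is correct and follows essentially the same route as the paper: the authors likewise obtain the corollary by composing Theorem \ref{thm31} with the ``in particular'' clause of Theorem \ref{thm32}, assigning to each peripheral subgroup the collection $\{G_i\}$ or $\emptyset$ according as it is or is not virtually cyclic, and identifying ``not virtually cyclic'' with ``rank at least two'' for virtually abelian groups of finite rank. Nothing further is needed.
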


Another way to obtain this conclusion from \cite[Theorem A(1)]{Ki24} is to use \cite[Theorem 1.2.1]{HK05}, where
it is shown that a geometrically finite Kleinian group is  a relatively hyperbolic group 
with respect to a collection of virtually abelian subgroups of rank at least two,
using the idea of ``CAT(0) spaces with isolated flats''.
Their argument also relies on \cite{Bo93} and \cite{DS05}.

\subsection{Proofs of Theorems \ref{main11} and \ref{main12}}

We are ready to prove Theorems \ref{main11} and \ref{main12}.

\medskip
\noindent
{\it Proof of Theorem \ref{main11}.} Recall that ${\rm Ker}\, ({\rm Aut}\, (S) \to {\rm Aut}\, (S)^{*})$ is a finite group by Theorem \ref{thm21}. Then ${\rm Aut}\, (S)$ is non-elementary hyperbolic, if and only if so is ${\rm Aut}\, (S)^{*}$. This shows the equivalence (1) $\Leftrightarrow$ (3). 

Let us show that (1) $\Rightarrow$ (2). Let $G := {\rm Aut}\, (S)$. Since $G$ is non-elementary, it follows that $G$ is not a virtually abelian group of finite rank by the definition. By the Tits' alternative for $G$, which follows from the Tits' alternative for $G^* = {\rm Aut}\, (S)^*$ by \cite[Theorem 1.1]{Og06}, we have $\Z * \Z \subset G^*$ and thus $\Z * \Z \subset G$ as well. 

Let us show that (2) $\Rightarrow$ (1). Let $G := {\rm Aut}\, (S)$. Since $\Z * \Z \subset G$, it follows that $G$ is not virtually abelian. Since $\Z^2 \not\subset G$, it follows that virtually abelian subgroups of $G$ are only virtually cyclic subgroups. Since $G$ is relatively hyperbolic with respect to virtually abelian subgroups of finite rank, by Corollary  \ref{cor35}, $G$ is relatively hyperbolic with respect to the empty set. Thus $G$ is a non-elementary hyperbolic group. This proves (2) $\Rightarrow$ (1).

One can apply exactly the same argument in (1) $\Leftrightarrow$ (2) for $G := {\rm Aut}\, (S)^{*}$, to obtain the equivalence (3) $\Leftrightarrow$ (4). 

This completes the proof of Theorem \ref{main11}.

\medskip
\noindent
{\it Proof of Theorem \ref{main12}.} If ${\rm NS}\, (S_1) \simeq {\rm NS}\, (S_2)$, then both ${\rm Aut}\, (S_1)^{*}$ and ${\rm Aut}\, (S_2)^{*}$ are isomorphic to ${\rm O}\, ({\rm NS}\, (S_1))/W({\rm NS}\, (S_1))$ up to finite kernel and finite cokernel by Theorem \ref{thm21}. Hence ${\rm Aut}\, (S_1)^{*}$ is non-elementary hyperbolic if and only if so is ${\rm Aut}\, (S_2)^{*}$. This implies the assertion of Theorem \ref{main12} (1).

By Proposition \ref{prop21} and Theorem \ref{thm22}, both $mr(S)$ and $jmr(S)$ depend only on the isomorphism classes of the lattice ${\rm NS}\, (S)$. This implies the assertions (2) and (3). 

This completes the proof of Theorem \ref{main12}.  

\section{Proof of Theorem \ref{main13}.}\label{sect3}

In this section, we prove Theorem \ref{main13}. We first show the following purely-algebro geometric proposition, which is one of the cores of our proof of Theorem \ref{main13}. We believe that this proposition may have other applications:

\begin{proposition}\label{prop41} Let $X$ be a K3 surface and let $r$ be a positive integer such that $r \ge 2$. Assume that ${\rm Aut}\, (X)$ has a subgroup $G \simeq \Z^r$. Then there is a genus one fibration $\varphi : X \to \P^1$ and a finite index subgroup $G_0 \subset G$ such that $G_0 \subset MW (\varphi)$ (hence $G_0 \simeq \Z^r$ as well). In particular, $mr(X) \ge 2$.
\end{proposition}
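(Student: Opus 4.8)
Proof proposal. The plan is to transport the $\Z^r$-action to ${\rm NS}(X)$, extract a common fixed nef isotropic class (hence a genus one fibration $\varphi$), cut the group down so that it fixes the base of $\varphi$ pointwise, and finally recognize a finite-index subgroup inside ${\rm MW}(\varphi)$. First, since $\ker({\rm Aut}(X)\to{\rm Aut}(X)^{*})$ is finite by Theorem~\ref{thm21} and $G\cong\Z^{r}$ is torsion-free, $G$ maps isomorphically onto its image $G^{*}\cong\Z^{r}\subset{\rm O}({\rm NS}(X))$, which acts faithfully on ${\rm NS}(X)$, preserves the nef cone, and — acting on the hyperbolic space $\mathbb H\subset\P({\rm NS}(X)\otimes\R)$ — is an infinite, discrete, virtually abelian (hence elementary) group of isometries. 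By the classification of elementary groups of isometries of $\mathbb H$, a subgroup $G_{1}^{*}\le G^{*}$ of index $\le 2$, still $\cong\Z^{r}$, fixes a point $\xi\in\partial\mathbb H$; choosing an infinite-order (hence loxodromic or parabolic) $g\in G_{1}^{*}$ and an ample class $A$, one may take $\xi$ to be the projective limit $\lim_{n}g^{n}(A)$, so that the corresponding isotropic ray lies in the closure of the nef cone. Let $f$ be its primitive integral generator: $f$ is a primitive nef isotropic class fixed by $G_{1}^{*}$. (This hyperbolic-geometry step is essentially the input one may instead quote from \cite{Og07}.)

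By Proposition~\ref{prop21} and the standard theory of linear systems on a K3 surface (cf.\ the proof of Theorem~\ref{thm22}), $|f|$ is a base-point-free pencil and $\varphi:=\Phi_{|f|}\colon X\to\P^{1}$ is a genus one fibration with fiber class $f$. Let $G_{1}\le G$ be the subgroup corresponding to $G_{1}^{*}$, so $G_{1}\cong\Z^{r}$ has finite index in $G$; each $h\in G_{1}$ satisfies $h^{*}f=f$, hence preserves $|f|$ and induces $\bar h\in{\rm Aut}(\P^{1})$ with $\varphi\circ h=\bar h\circ\varphi$, giving a homomorphism $\pi\colon G_{1}\to{\rm Aut}(\P^{1})$ whose image preserves the finite set $B$ of critical values of $\varphi$. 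Now a genus one fibration on a K3 surface has no multiple fibers, and by the Shioda--Tate formula \eqref{STformula} together with $\rho(X)\le 20$, $\chi_{\mathrm{top}}(X)=24$ and Kodaira's list of singular fibers it must have at least three singular fibers, so $|B|\ge 3$; hence an element of ${\rm im}(\pi)$ of infinite order would, after a power, fix $\ge 3$ points of $\P^{1}$ and so be trivial, a contradiction. Thus ${\rm im}(\pi)$ is a finitely generated abelian torsion group, hence finite, and $K:=\ker\pi\cong\Z^{r}$ has finite index in $G$.

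Finally, every $h\in K$ preserves each fiber of $\varphi$, so restricts to a $\C(t)$-automorphism of the generic fiber $X_{\eta}$, and $K\hookrightarrow{\rm Aut}_{\C(t)}(X_{\eta})$ because an automorphism acting trivially on $X_{\eta}$ is trivial on $X$. Since $X_{\eta}$ is a torsor under its Jacobian $J_{\eta}$, the exact sequence $0\to J_{\eta}(\C(t))\to{\rm Aut}_{\C(t)}(X_{\eta})\to{\rm Aut}(J_{\eta},0)$ (translations, followed by the action on the origin-preserving automorphisms of the Jacobian, a group of order $\le 6$) exhibits ${\rm MW}(\varphi)\cong J_{\eta}(\C(t))$ as a finite-index subgroup of ${\rm Aut}_{\C(t)}(X_{\eta})$. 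Hence $G_{0}:=K\cap{\rm MW}(\varphi)$ has finite index in $K\cong\Z^{r}$, so $G_{0}\cong\Z^{r}$, $G_{0}\subset{\rm MW}(\varphi)$, and $G_{0}$ has finite index in $G$. As ${\rm MW}(\varphi)$ is finitely generated abelian of rank $mr(\varphi)$, this forces $mr(\varphi)\ge r\ge 2$, and therefore $mr(X)\ge 2$.

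Main obstacle. The two substantive points are the hyperbolic-geometry step — forcing the abelian group to fix a boundary point and, crucially, arranging that the fixed isotropic ray is \emph{nef}, so that it genuinely is a fiber class, which is where a result like \cite{Og07} enters — and the fact, via Shioda--Tate, that a genus one fibration on a K3 surface has at least three singular fibers, which is what makes the induced action on $\P^{1}$ finite; the Mordell--Weil bookkeeping at the end is routine.
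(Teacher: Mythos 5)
Your overall architecture (fixed boundary point $\to$ nef isotropic class $\to$ fibration $\to$ kill the action on the base $\to$ land in ${\rm MW}$) is the paper's, and the second half of your argument — at least three singular fibers, finiteness of the image in ${\rm Aut}(\P^1)$, order $\le 6$ for non-translation automorphisms of the generic fiber — is essentially identical to the paper's and is fine. But there is a genuine gap at the sentence ``Let $f$ be its primitive integral generator.'' You allow the chosen infinite-order element $g\in G_1^*$ to be \emph{loxodromic}, and in that case the $G_1^*$-fixed boundary point $\xi=\lim_n g^n(A)$ is the eigenline of $g^*$ for the Salem eigenvalue $d_1(g)>1$, which is an \emph{irrational} line in ${\rm NS}(X)\otimes\R$: it contains no nonzero integral vector, so it has no primitive integral generator and defines no fibration. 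A symptom of the problem is that your argument never uses the hypothesis $r\ge 2$ until the final inequality $mr(\varphi)\ge r$; yet the construction of an invariant genus one fibration is false for $r=1$ (a K3 surface whose automorphism group is generated by a single positive-entropy automorphism preserves no genus one fibration), so any correct proof must invoke $r\ge 2$ before the fibration is produced.

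The missing step is exactly the paper's Lemma \ref{lem41}: every $g\in G$ satisfies $d_1(g)=1$, i.e.\ $G^*$ contains no loxodromic element. The paper proves this by showing that a loxodromic $g$ forces all of $G$ to preserve the two isotropic eigenrays, whence $G$ injects into $\R_{>0}$ via the eigenvalue character and, using the minimality of $d_1(g)$ among the (finitely bounded-degree) Salem numbers occurring, $G$ is cyclic — contradicting $r\ge2$. An equivalent fix in your language: the centralizer of a loxodromic isometry inside a \emph{discrete} subgroup of ${\rm Isom}(\mathbb{H}^{\rho-1})$ preserves the axis and is therefore virtually cyclic, so a discrete $\Z^r$ with $r\ge 2$ must be of parabolic type. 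Once loxodromics are excluded, every infinite-order element is parabolic, some power is unipotent, and the fixed boundary ray is rational (this is the content of the paper's Lemma \ref{lem42}, proved by the unipotent limit $e=N^s(w)$ together with the uniqueness statement of \cite[Theorem 2.1]{Og07}); your appeal to ``the classification of elementary groups'' silently conflates the parabolic and hyperbolic types, and it is precisely the hyperbolic type that must be ruled out. With Lemma \ref{lem41} supplied, the remainder of your write-up goes through.
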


\begin{proof} Set $L := {\rm NS}\, (X)$ and $\overline{A}(L) \subset L_{\R}:=L\otimes_\Z \R$ be the nef cone, i.e., the closure of the ample cone $A(L) \subset L_{\R}$. Note that $\overline{A}(L)$ is a strict convex closed cone with open interior $A(L)$ and $\overline{A}(L)$ is preserved by ${\rm Aut}\, (X)$. First we show the following:

\begin{lemma}\label{lem41} Let $g \in G$. Then $d_1(g) = 1$. 
\end{lemma}

\begin{proof} Assuming to the contrary that there is $g \in G$ with $d_1(g) >1$, we shall derive a contradiction. By \cite[Theorem 3.2]{Mc02}, $d_1(g)$ is a Salem number and by the Perron-Frobenius type theorem \cite{Bi67}, there is a unique $0 \not= v \in \overline{A}(L)$ up to $\R_{>0}$ such that $g(v) = d_1(g)v$. We have $(v^2) =0$ as 
$$(v^2) = (g(v)^2) = d_1(g)^2(v^2)$$
and $d_1(g) >1$. Note that $d_1(g^{-1}) = d_1(g)$ as the minimal polynomial $\Phi_g(t) \in \Z[t]$ of a Salem number is reciprocal. Then there exists a unique element $0 \not= u \in \overline{A}(L)$ up to $\R_{>0}$ such that $g^{-1}(u) = d_1(g)u$, i.e., $g(u) = d_1(g)^{-1}u$. We have $(u^2) = 0$ and $u$ and $v$ are linearly independent over $\R$ as $d_1(g) \not= d_1(g)^{-1}$ if $d_1(g) >1$. Then $(u, v) >0$ as $L_{\R}$ is of signature $(1,\rho\,(S) -1)$. 

\medskip

We choose $g \in G$ such that $d_1(g) >1$ is the minimum among all $d_1(g) > 1$ with $g \in G$. Since the degree of $\Phi_g(t) \in \Z[t]$ is bounded by $\rho\, (S)$, the minimum exists (\cite[Pages 229-230]{Mc02}). Let $h \in G$. Then, since $g \circ h = h \circ g$ it follows that 
$$g(h(v)) = d_1(g)h(v)\,\, ,\,\, g(h(u)) = d_1(g)^{-1}h(u)$$
and $h(v), h(u) \in \overline{A}(L)$. Thus by the uniqueness of $u$ and $v$ up to $\R_{>0}$, it follows that there are positive real numbers $a(h)$ and $b(h)$ such that 
$$h(v) = a(h)v\,\, ,\,\, h(u) = b(h)u.$$
By $(v,u) = (h(v), h(u)) = a(h)b(h)(v, u)$ and by $(v, u) > 0$, it follows that $a(h)b(h) =1$. 
We have then a group homomorphism
$$c : G \to \R_{>0} \times \R_{>0}\,\, ;\,\, h \mapsto (a(h), b(h) = a(h)^{-1}).$$
This $c$ is injective. Indeed, if $a(h) = b(h) = 1$, then as $\R\langle v, u \rangle^{\perp}$ is of negative definite and $h$-stable, it follows that $h$ is of finite order. Then $h = \id_X$, as $G$ has no torsion except $\id_X$.

\medskip

As $d_1(g) > 1$ is the minimum, there is a unique integer $n(h)$ such that $d_1(h) = d_1(g)^{n(h)}$. Since $c$ is injective, it follows that $h = g^{n(h)}$, a contradiction to the assumption that $G \simeq \Z^r$ with $r \ge 2$.  This completes the proof of Lemma \ref{lem41}. 
\end{proof}

\begin{lemma}\label{lem42} There is $0 \not= e \in \overline{A}(L) \cap L$ such that $e$ is primitive, isotropic and satisfies $g(e) = e$ for all $g \in G$. 
\end{lemma}

\begin{proof} Our argument closely follows an argument in \cite[Lemma 7.6]{OZ22}. By Lemma \ref{lem41}, $d_1(g) = 1$ for all $g \in G$. Choose $\id_X \not= h \in G$. Then $h$ is of infinite order. Since the characteristic polynomial of $h$ is the product of cyclotomic polynomials by $d_1(h)= 1$ (see \cite[Theorem 3.2]{Mc02}), there is a positive integer $m$ such that $h^m$ is unipotent and preserves $L$. Replacing $h$ by $h^m$, we may and will assume that $h$ is unipotent. Note that $h$ remains to be of infinite order. Thus $h - \id_X$ is nonzero and nilpotent also on $L$, that is, there is an endomorphism $0 \not= N : L \to L$ such that
$$h = \id + N\,\, ,\,\, N^s \not= 0\,\, ,\,\, N^{s+1} = 0$$
for some integer $s$ with $1 \le s \le \rho(S)$. 
Then 
$$h^n = \id + C_1(n)N + \ldots + C_{s}(n)N^s$$
where $C_k(n)$ is a binomial coefficient, which is a polynomial of $n$ of degree $k$ with rational coefficients.  
Since $A(L)$ is open in $L_{\R}$ and $L \cap A(L)$ generates $L_{\R}$ as an $\R$-vector space, there is $w \in A(L) \cap L$ such that $N^s(w) \not= 0$ as $N^s \not= 0$. Then 
$$\lim_{n \to \infty} \frac{h^n(w)}{n^s} = \lim_{n \to \infty}\frac{C_{s}(n)}{n^s}N^s(w) = \frac{N^s(w)}{s!}$$
Thus $e := N^s(w)$ satisfies, after a suitable positive multiple, that $e \in \overline{A}(L) \cap L$, $e \not=0$ is primitive, $h(e) = e$ and 
$$(e^2) = \lim_{n \to \infty} ((\frac{s! h^n(w)}{n^s})^2) = \lim_{n \to \infty} \frac{(s!)^2(w^2)}{n^{2s}} = 0,$$
as claimed.

\medskip

By \cite[Theorem 2.1]{Og07} and by the fact that $e \in L$ is primitive and $d_1(g) =1$ for all $g \in G$, it follows that $e$ is unique and satisfies $g(e) = e$ for all $g \in G$. This completes the proof of Lemma \ref{lem42}. 
\end{proof}

The class $e$ in Lemma \ref{lem42} gives a genus one fibration $\varphi : X \to \P^1$ preserved by $G \simeq \Z^2$. Since $\varphi$ has at least three singular fibers by \cite[Theorem 0.1]{VZ01}, ${\rm Im} (\iota : G \to {\rm Aut}\, (\P^1))$ is finite. Thus $G_1 := {\rm Ker}\, (\iota)$ is a finite index subgroup of $G$. Since any non-translation automorphism of an elliptic curve is of finite order $\le 6$, it follows that there is a finite index subgroup of $G_0 \subset G_1$ such that $G_0 \subset MW(\varphi)$. This completes the proof of Proposition \ref{prop41}. \end{proof}

We are now ready to prove Theorem \ref{main13}.

\medskip
\noindent
{\it Proof of Theorem \ref{main13}.} Assume first that ${\rm Aut}\, (S)$ is non-elementary and hyperbolic. Then ${\rm Aut}\, (S)$ is an infinite group and it is not virtually abelian, as ${\rm Aut}\, (S)$ is non-elementary. Thus by the contra-position of \cite[Theorem 1.4]{Og07}, we deduce that ${\rm Aut}\, (S)$ is of positive entropy. Since ${\rm Aut}\, (S)$ is hyperbolic, we have $\Z^2 \not\subset {\rm Aut}\, (S)$ by Theorem \ref{main11}. Thus $mr (S) \le 1$ when $S$ has a genus one fibration. Note that we do not need the assumption $\rho\, (S) \ge 5$ in this direction.

Let us show the converse. Since ${\rm Aut}\, (S)$ is of positive entropy, it follows that ${\rm Aut}\, (S)$ is an infinite group. For a K3 surface $X$ with $\rho\, (X) \ge 5$, the following three statements are equivalent by \cite[Theorem 9.1.1]{Ni81} and \cite[Theorem 1.4]{Og07} as it is pointed out by \cite[Introduction]{Yu25}:

\begin{enumerate}

\item ${\rm Aut}\, (X)$ is not of positive entropy, i.e. $d_1(f) = 1$ for all $f \in {\rm Aut}\, (X)$;

\item ${\rm Aut}\, (X)$ preserves a unique genus one fibration on $X$;

\item ${\rm Aut}\, (X)$ is virtually abelian (which is equivalent to that ${\rm Aut}\, (X)$ is almost abelian in \cite[Theorem 1.4]{Og07}, see e.g. \cite[Theorem 2.2 (2)]{DLOZ22} and its proof).

\end{enumerate}

Hence ${\rm Aut}\, (S)$ is not virtually abelian and thus $\Z * \Z \subset {\rm Aut}\, (S)$ by the Tits' alternative for a K3 surface \cite[Theorem 1.1]{Og06}. In addition, by our assumption $mr (S) \le 1$ and by Proposition \ref{prop41}, it follows that $\Z^r \not\subset {\rm Aut}\, (S)$. Hence ${\rm Aut}\, (S)$ is non-elementary hyperbolic by Theorem \ref{main11}. 

This completes the proof of Theorem \ref{main13}. 

\section{Proof of Theorem \ref{main15}.}\label{sect4}

In this section, we prove Theorem \ref{main15}. Let $L := (L, (**)_L)$ be an even nondegenerate lattice. The {\it discriminant group} $(A_L, q_L)$ of $L$ is the pair of finite abelian group $A_L := L^*/L$ and the quadratic form $q_{L} : A_L \to \Q/2\Z$ defined by $q_{L}(x\, {\rm mod}\, L) = (x^2)\, {\rm mod}\, 2\Z$. A (necessarily even nondegenerate) lattice $L' := (L', (*,**)_{L'})$ is called {\it of the same genus} as $L$ if $L'$ satisfies one of the following three equivalent conditions (\cite[Corollary 1.9.4 and Corollary 1.13.4]{Ni80}):

\begin{enumerate}

\item one has $(L', (*,**)_{L'}) \otimes_{\Z} \Z_p \simeq (L, (*,**)_{L}) \otimes_{\Z} \Z_p$ for all prime numbers $p$ and $(L', (*,**)_{L'}) \otimes_{\Z} \R \simeq (L, (*,**)_{L}) \otimes_{\Z} \R$ over the real number field $\R$. Here $\Z_p$ is the ring of $p$-adic integers. 

\item $(A_{L'}, q_{L'}) \simeq (A_{L}, q_{L})$ and $(*,**)_{L'}$ has the same signature as $(*,**)_{L}$.

\item $U\oplus L'\simeq U\oplus L$ as lattices.

\end{enumerate}

By $\sG(L)$, we denote the set of isomorphism classes of lattices of the same genus as $L$. 

We first prove the following purely algebraic:

\begin{proposition}\label{prop51}
Let $L$ be an even nondegenerate lattice and let $L \subset \overline{L}$ be an even overlattice of $L$. Let $\overline{L'} \in \sG(\overline{L})$. Then there is a sublattice $L' \subset \overline{L'}$ such that $L' \in \sG(L)$, and $\overline{L'}$ is thus an overlattice of $L'$.  
\end{proposition}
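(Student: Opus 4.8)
The plan is to work with the first of the three equivalent descriptions of the genus recalled just above, i.e.\ the local one: $L'\in\sG(L)$ as soon as $L'\otimes_\Z\Z_p\simeq L\otimes_\Z\Z_p$ for every prime $p$ and $L'\otimes_\Z\R\simeq L\otimes_\Z\R$. Since $\overline{L'}$ and $\overline L$ lie in the same genus, the idea is to transport the finite-index inclusion $L\subset\overline L$ into $\overline{L'}$ one prime at a time and then patch the local pieces into a single global sublattice of $\overline{L'}$; the resulting lattice will automatically lie in $\sG(L)$.

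First I would fix, for each prime $p$, an isomorphism of $\Z_p$-lattices $\psi_p:\overline L\otimes_\Z\Z_p\to\overline{L'}\otimes_\Z\Z_p$; these exist precisely because $\overline{L'}\in\sG(\overline L)$. Put $N_p:=\psi_p(L\otimes_\Z\Z_p)\subseteq\overline{L'}\otimes_\Z\Z_p$, a $\Z_p$-sublattice of index $[\overline L\otimes\Z_p:L\otimes\Z_p]$ that is isometric to $L\otimes_\Z\Z_p$ via $\psi_p$. Crucially, for every prime $p$ not dividing the finite index $[\overline L:L]$ one has $L\otimes_\Z\Z_p=\overline L\otimes_\Z\Z_p$, hence $N_p=\overline{L'}\otimes_\Z\Z_p$; so only the finitely many primes $p_1,\dots,p_k$ dividing $[\overline L:L]$ need attention. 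Choosing $a_i\ge 0$ with $p_i^{a_i}(\overline{L'}\otimes\Z_{p_i})\subseteq N_{p_i}$ and using $\overline{L'}/p_i^{a_i}\overline{L'}=(\overline{L'}\otimes\Z_{p_i})/p_i^{a_i}(\overline{L'}\otimes\Z_{p_i})$, I would define $L'$ to be the preimage in $\overline{L'}$ of $\prod_{i=1}^k N_{p_i}/p_i^{a_i}(\overline{L'}\otimes\Z_{p_i})$ under the natural surjection $\overline{L'}\to\prod_{i=1}^k\overline{L'}/p_i^{a_i}\overline{L'}$. Then $L'$ is a sublattice of $\overline{L'}$ with $L'\otimes_\Z\Z_p=N_p$ for all $p$ and $[\overline{L'}:L']=\prod_{i=1}^k[\overline{L'}\otimes\Z_{p_i}:N_{p_i}]=[\overline L:L]<\infty$.

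It then remains only to identify the genus of $L'$. By construction $L'\otimes_\Z\Z_p=N_p\simeq L\otimes_\Z\Z_p$ for every prime $p$, and $L'\otimes_\Z\R=\overline{L'}\otimes_\Z\R\simeq\overline L\otimes_\Z\R=L\otimes_\Z\R$, where the middle isometry holds because $\overline{L'}\in\sG(\overline L)$ and the two equalities because $L\subset\overline L$ and $L'\subset\overline{L'}$ are of finite index. Hence $L'$ satisfies condition (1), so $L'\in\sG(L)$. Finally, being a finite-index sublattice of the even nondegenerate lattice $\overline{L'}$, the lattice $L'$ is itself even and nondegenerate and has the same rank as $\overline{L'}$, so $\overline{L'}$ is an overlattice of $L'$, exactly as asserted.

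The one genuinely non-formal ingredient is the patching in the second step: one must know that prescribing, at each prime $p$, a finite-index $\Z_p$-sublattice of $\overline{L'}\otimes_\Z\Z_p$ which equals the whole lattice for all but finitely many $p$ is realized by a unique global finite-index sublattice of $\overline{L'}$ with exactly those localizations. This is standard — sublattices of a fixed lattice are determined by, and can be reconstructed from, their localizations at the finite places, with the finiteness of the set of ``bad'' primes coming from $[\overline L:L]<\infty$ — and it is the technical heart of the argument. I expect the particular choice of the local isometries $\psi_p$ to be irrelevant (different choices yield isometric $L'$), so no compatibility of the $\psi_p$ across primes is needed. A discriminant-form alternative — writing $H=\overline L/L$ as an isotropic subgroup of $A_L$ with $A_{\overline L}\cong H^\perp/H$, transporting $H$ through an isometry $A_{\overline{L'}}\simeq A_{\overline L}$, and realizing $\overline{L'}$ as an overlattice of a lattice $L'$ with $A_{L'}\cong A_L$ — would amount to a relative version of Nikulin's existence theorem and does not seem shorter than the localization argument above.
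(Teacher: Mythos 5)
Your argument is correct, but it follows a genuinely different route from the paper's. You work entirely with the local characterization of the genus: you transport $L\otimes\Z_p\subset\overline L\otimes\Z_p$ into $\overline{L'}\otimes\Z_p$ via local isometries $\psi_p$ (which exist since $\overline{L'}\in\sG(\overline L)$), observe that nothing happens at primes not dividing $[\overline L:L]$, and patch the finitely many nontrivial local sublattices $N_{p_i}$ into a global finite-index sublattice $L'\subset\overline{L'}$ via the standard correspondence between sublattices and their localizations; the genus of $L'$ is then read off prime by prime, and evenness is inherited from $\overline{L'}$. The paper instead stays within Nikulin's discriminant-form formalism: it rescales by $n=[\overline L:L]$ so that $n\overline L\subset L\subset\overline L$, notes that $\overline{L'}\in\sG(\overline L)$ is equivalent to $n\overline{L'}\in\sG(n\overline L)$, and transports the isotropic subgroup $L/n\overline L\subset A_{n\overline L}$ through an isometry $(A_{n\overline L},q)\simeq(A_{n\overline{L'}},q)$ — using that $\overline L/n\overline L$ is exactly the $n$-torsion of $A_{n\overline L}$, so the image sits inside $\overline{L'}/n\overline{L'}$ — and then invokes Nikulin's overlattice correspondence to produce $L'$ with $n\overline{L'}\subset L'\subset\overline{L'}$. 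The rescaling trick is precisely what lets the paper avoid the ``relative existence theorem'' difficulty you flag at the end of your proposal for the naive discriminant-form approach, since it converts the sublattice $L\subset\overline L$ into an overlattice of $n\overline L$. Your localization argument is more elementary and self-contained modulo the (standard, and correctly stated) local-global patching lemma; the paper's is shorter given that Nikulin's machinery is already in use throughout. One minor imprecision on your side: different choices of the $\psi_p$ need only yield lattices $L'$ in the same genus, not necessarily isometric ones, but this is immaterial since the proposition only asserts $L'\in\sG(L)$.
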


\begin{proof} Set $n := |\overline{L}/L| = [\overline{L}:L]$. Consider the sublattice of $\overline{L}$ defined by
$$n\overline{L} := \{nx\,|\, x \in \overline{L}\} \subset \overline{L}.$$
Then $L$ satisfies $n\overline{L} \subset L \subset \overline{L}$. By the first equivalent definition of the same genus, we have $\overline{L'} \in \sG(\overline{L})$ if and only if $n\overline{L'} \in \sG(n\overline{L})$. Thus, by the second equivalent definition, there is an isomorphism of discriminant groups
$$\phi : (A_{n\overline{L}}, q_{n\overline{L}}) \simeq (A_{n\overline{L'}}, q_{n\overline{L'}}).$$
Since $\overline{L}/n\overline{L} = \{x \in A_{n\overline{L}}\,|\, nx = 0\}$ and $\overline{L'}/n\overline{L'} = \{x \in A_{n\overline{L'}}\,|\, nx = 0\}$, it follows that we have $\overline{L}/n\overline{L} \simeq \overline{L'}/n\overline{L'}$ under $\phi$, preserving the induced quadratic forms. Since $L$ is an even overlattice of $n\overline{L}$, we have $q_{{n\overline{L}}}|_{L/n\overline{L}} = 0$ (see \cite[Proposition 1.4.1]{Ni80}). Hence $$q_{{n\overline{L'}}}|_{\phi(L/n\overline{L})} = 0.$$ There is then the lattice $L'$ such that  $n\overline{L'} \subset L' \subset \overline{L'}$, corresponding to $\phi(L/n\overline{L}) \subset A_{n\overline{L'}}$ (\cite[Proposition 1.4.1]{Ni80}). We have then $L' \in \sG(L)$. This is because the quadratic forms $q_{L'}$ and $q_{L}$ are the ones naturally induced from $q_{{n\overline{L'}}}$ and $q_{{n\overline{L}}}$ via $q_{{n\overline{L'}}}|_{\phi(L/n\overline{L})} = 0$ and $q_{{n\overline{L}}}|_{L/n\overline{L}} = 0$. This completes the proof of Proposition \ref{prop51}. \end{proof}

We recall the following very important result due to Keum \cite[Lemma 2.1]{Ke00} based on an idea of Fourier-Mukai transform by Mukai \cite[Theorem 1.5]{Mu87}:

\begin{theorem}\label{thm51}
Let $X$ be a K3 surface, let $\varphi : X \to \P^1$ be a genus one fibration with fiber class $f \in {\rm NS}\, (X)$ and $j_{\varphi} : X_{\varphi} \to \P^1$ the associated Jacobian fibration. Let $l$ be the positive integer such that  $(l) = \{(f, x)\,|\, x \in {\rm NS}\, (X)\}$ as ideals of $\Z$. Let $v \in {\rm NS}\, (X)$ such that $(f, v) = l$. Then $f/l \in {\rm NS}\, (X)^{*}$ and 
$${\rm NS}\, (X_{\varphi}) \simeq \Z (f/l) + {\rm NS}\, (X) \subset {\rm NS}\, (X)^{*},$$
as lattices.
\end{theorem}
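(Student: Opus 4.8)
The plan is to realize the Jacobian K3 surface $X_{\varphi}$ as a (fine, or twisted) moduli space of sheaves on $X$, apply the Hodge isometry of Mukai \cite[Theorem 1.5]{Mu87} to compute ${\rm NS}\,(X_{\varphi})$ inside the algebraic Mukai lattice of $X$, and then verify a short lattice identity (this is essentially Keum's argument \cite[Lemma 2.1]{Ke00}, fleshed out). Write $\widetilde{H}(X,\Z) := H^{0}(X,\Z)\oplus H^{2}(X,\Z)\oplus H^{4}(X,\Z)$ for the Mukai lattice, with the Mukai pairing, and $\widetilde{H}_{\rm alg}(X) := \Z\oplus{\rm NS}\,(X)\oplus\Z\subset\widetilde{H}(X,\Z)$ for its algebraic part, so that the pairing is $\langle(r,D,s),(r',D',s')\rangle = (D,D')_{X} - rs' - r's$. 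Pushing a degree $0$ invertible sheaf on a smooth fibre $S_{t}$ of $\varphi$ forward to $X$ produces a stable sheaf whose Mukai vector is $\mathbf{v} := (0,f,0)$, and $(\mathbf{v}^{2}) = (f^{2})_{X} = 0$; since $f$ is primitive in ${\rm NS}\,(X)$, $\mathbf{v}$ is primitive in $\widetilde{H}_{\rm alg}(X)$. For a $\mathbf{v}$-generic polarization the moduli space $M_{X}(\mathbf{v})$ of Gieseker-stable sheaves with Mukai vector $\mathbf{v}$ is a K3 surface, and its support morphism $M_{X}(\mathbf{v})\to |f|\cong\P^{1}$ has fibre ${\rm Pic}^{0}(S_{t})$ over a general $t$; hence $M_{X}(\mathbf{v})$ is the relative compactified Jacobian of degree $0$ of $\varphi$, which carries the zero section $t\mapsto[\sO_{S_{t}}]$, so that $M_{X}(\mathbf{v})\simeq X_{\varphi}$ (cf.\ \cite[Chapter V, Section 5.3]{CD89}; recall that a genus one fibration on a K3 surface has no multiple fibre, so the canonical bundle formula raises no issue here).

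By Mukai's theorem \cite[Theorem 1.5]{Mu87} (and its extension to non-fine moduli spaces by means of a twisted universal sheaf), the Fourier--Mukai kernel on $X\times X_{\varphi}$ induces a Hodge isometry $\mathbf{v}^{\perp}/\Z\mathbf{v}\xrightarrow{\ \sim\ }H^{2}(X_{\varphi},\Z)$ respecting the decomposition into algebraic and transcendental parts; restricting to algebraic parts gives ${\rm NS}\,(X_{\varphi})\simeq\mathbf{v}^{\perp}/\Z\mathbf{v}$, where now $\mathbf{v}^{\perp}$ is taken inside $\widetilde{H}_{\rm alg}(X)$. A direct computation gives $\mathbf{v}^{\perp} = \Z(1,0,0)\oplus (\Z f)_{{\rm NS}\,(X)}^{\perp}\oplus\Z(0,0,1)$ with $\mathbf{v} = (0,f,0)$ lying in the middle summand, and $(1,0,0),(0,0,1)$ span a hyperbolic plane orthogonal to ${\rm NS}\,(X)$, so
$${\rm NS}\,(X_{\varphi})\;\simeq\; U\oplus\big((\Z f)_{{\rm NS}\,(X)}^{\perp}/\Z f\big).$$
It then remains to check the purely lattice-theoretic identity $\Z(f/l)+{\rm NS}\,(X)\simeq U\oplus\big((\Z f)_{{\rm NS}\,(X)}^{\perp}/\Z f\big)$. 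Set $L := \Z(f/l)+{\rm NS}\,(X)\subset{\rm NS}\,(X)\otimes\Q$; it is an even lattice since $(f/l,x)_{X} = (f,x)_{X}/l\in\Z$ for every $x\in{\rm NS}\,(X)$ by the defining property of $l$. With $v\in{\rm NS}\,(X)$ as in the statement (so $(f,v)_{X} = l$ and $(v^{2})_{X}\in 2\Z$), put $e := f/l$ and $e' := v - \tfrac{1}{2}(v^{2})_{X}\,e\in L$; then $(e^{2}) = (e'^{2}) = 0$ and $(e,e') = 1$, so $\Z e\oplus\Z e'\simeq U$ is a primitive unimodular --- hence direct-summand --- sublattice of $L$. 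Its orthogonal complement in $L$ is carried isometrically onto $(\Z f)_{{\rm NS}\,(X)}^{\perp}/\Z f$ by the map $y\mapsto y - (y,e')_{X}\,e$ (which is well defined, surjective onto $\{e,e'\}^{\perp}\cap L$, has kernel exactly $\Z f$, and preserves the form because $(y,e)_{X} = (y,f)_{X}/l = 0$ for $y\in(\Z f)_{{\rm NS}\,(X)}^{\perp}$). Combining the last two displays proves the theorem.

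The main obstacle is the moduli-theoretic step, and specifically the case $l>1$ --- equivalently, the case where $\varphi$ has no section. Then $M_{X}(\mathbf{v})$ carries no universal sheaf, since $\gcd\{\langle\mathbf{v},w\rangle : w\in\widetilde{H}_{\rm alg}(X)\} = \gcd\{(f,D)_{X} : D\in{\rm NS}\,(X)\} = l$, so Mukai's original argument does not apply verbatim; one must work with a twisted universal sheaf, i.e.\ with a Brauer class on $M_{X}(\mathbf{v})$, and verify that the twisting leaves the induced isometry on $H^{2}$ unaffected --- this is precisely the point carried out by Keum \cite[Lemma 2.1]{Ke00}. An alternative, entirely algebro-geometric route that avoids moduli spaces is to use that the Jacobian fibration $j_{\varphi}$ has the same singular fibres (Kodaira types) and the same Mordell--Weil group as $\varphi$ (\cite[Chapter V]{CD89}): since $j_{\varphi}$ has a section, $U = \langle\overline{f},O\rangle$ (fibre class $\overline{f}$ and a section $O$) embeds primitively in ${\rm NS}\,(X_{\varphi})$, which therefore splits as $U\oplus W$ with $W$ the negative definite frame lattice; $W$ is determined by the common fibre data and Mordell--Weil group and hence equals $(\Z f)_{{\rm NS}\,(X)}^{\perp}/\Z f$, after which the same lattice computation as above concludes.
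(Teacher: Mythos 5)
The paper does not actually prove Theorem \ref{thm51}: it is quoted from Keum \cite[Lemma 2.1]{Ke00}, whose argument the paper describes as based on Mukai's Fourier--Mukai idea, and your main argument is a correct reconstruction of exactly that route --- realizing $X_{\varphi}$ as the moduli space $M_X(0,f,0)$, applying Mukai's Hodge isometry $\mathbf{v}^{\perp}/\Z\mathbf{v}\simeq H^2(X_{\varphi},\Z)$ (which does cover the non-fine case $l>1$, via quasi-universal or twisted families, as you note), and finishing with the lattice identity $\Z(f/l)+{\rm NS}\,(X)\simeq U\oplus\bigl((\Z f)^{\perp}_{{\rm NS}\,(X)}/\Z f\bigr)$, whose verification you carry out correctly. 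The only caveat is your closing ``alternative route,'' which is not a proof as stated: the frame lattice of $j_{\varphi}$ is not determined by the Kodaira fibre types together with the abstract Mordell--Weil group alone (one needs the gluing data), and identifying it with $(\Z f)^{\perp}_{{\rm NS}\,(X)}/\Z f$ is essentially the content of the theorem itself; since this is offered only as an aside, the main argument stands.
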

It is well known that $X_{\varphi}$ is again a K3 surface \cite[Theorem 4.3.20 and Proposition 4.3.14]{CDL25}. Under this isomorphism, we may and will identify ${\rm NS}\, (X_{\varphi}) \simeq \Z (f/l) + {\rm NS}\, (X)$ and write
\begin{equation}\label{iota}
\iota : {\rm NS}\, (X) \hookrightarrow {\rm NS}\, (X_{\varphi}) = \Z (f/l) + {\rm NS}\, (X) \subset {\rm NS}\, (X)^{*}.
\end{equation}
Then we have $U \simeq \Z \langle f/l, v \rangle \subset {\rm NS}\, (X_{\varphi})$ and the orthogonal direct sum decomposition
$${\rm NS}\, (X_{\varphi}) = \Z \langle f/l, v \rangle \oplus (\Z \langle f/l, v \rangle)_{{\rm NS}\, (X_{\varphi})}^{\perp}.$$
We need the following direct consequence.
\begin{corollary}\label{cor51}
Let $X$ be a K3 surface and $\varphi : X \to \P^1$ be a genus one fibration with fiber class $f \in {\rm NS}\, (X)$ and $j_{\varphi} : X_{\varphi} \to \P^1$ the associated Jacobian fibration. Then $mr(X) \ge mr(X_{\varphi})$.
\end{corollary}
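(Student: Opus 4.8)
The plan is to derive Corollary~\ref{cor51} directly from Keum's Theorem~\ref{thm51}, using the monotonicity of the invariant $mr$ under passage to overlattices recorded in Remark~\ref{overlattice}(1). First I would note that $X_{\varphi}$ is a K3 surface carrying the Jacobian fibration $j_{\varphi}$, so ${\rm NS}\,(X_{\varphi})$ is isotropic (indeed it contains $U$ by Proposition~\ref{prop21}) and the quantity $mr(X_{\varphi})$ is well defined, as is $mr(X)$ since $X$ already has the genus one fibration $\varphi$. By Theorem~\ref{thm22} together with the identification $mr(S) = mr({\rm NS}\,(S))$ noted in Remark~\ref{overlattice}(2), it then suffices to prove the inequality $mr({\rm NS}\,(X)) \ge mr({\rm NS}\,(X_{\varphi}))$ purely on the level of lattices.

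The key step is the observation that ${\rm NS}\,(X_{\varphi})$ is an \emph{even overlattice} of ${\rm NS}\,(X)$. Indeed, by Theorem~\ref{thm51} the inclusion $\iota$ of \eqref{iota} identifies ${\rm NS}\,(X)$ with the sublattice ${\rm NS}\,(X) \subset \Z(f/l) + {\rm NS}\,(X) = {\rm NS}\,(X_{\varphi})$ sitting inside ${\rm NS}\,(X)^{*}$. Since $f/l$ is a rational multiple of the element $f \in {\rm NS}\,(X)$, the lattice ${\rm NS}\,(X_{\varphi}) = \Z(f/l) + {\rm NS}\,(X)$ has the same rank as ${\rm NS}\,(X)$; and ${\rm NS}\,(X_{\varphi})$ is an even lattice, being the N\'eron--Severi lattice of a K3 surface. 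Hence $\iota({\rm NS}\,(X)) \subset {\rm NS}\,(X_{\varphi})$ is a finite-index inclusion of even lattices of equal rank, i.e.\ ${\rm NS}\,(X_{\varphi})$ is an even overlattice of $\iota({\rm NS}\,(X)) \cong {\rm NS}\,(X)$.

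Applying Remark~\ref{overlattice}(1) with $L = \iota({\rm NS}\,(X))$ and $\overline{L} = {\rm NS}\,(X_{\varphi})$ then gives $mr({\rm NS}\,(X)) = mr(\iota({\rm NS}\,(X))) \ge mr({\rm NS}\,(X_{\varphi}))$, which by the reduction in the first paragraph is exactly the desired inequality $mr(X) \ge mr(X_{\varphi})$.

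There is essentially no obstacle in this argument once Theorem~\ref{thm51} is available: Keum's description of ${\rm NS}\,(X_{\varphi})$ does all the work, and the remainder is a formal application of the overlattice inequality. The only point demanding a moment's attention is the translation between the surface-theoretic invariant $mr(\cdot)$ (a maximum over genus one fibrations) and its purely lattice-theoretic counterpart (a maximum over primitive isotropic classes), so that Remark~\ref{overlattice}(1) genuinely applies; this is precisely the content of Theorem~\ref{thm22} together with the fact that any primitive isotropic class of ${\rm NS}\,(X)$ can be moved into the nef cone by the Weyl group $W({\rm NS}\,(X))$, which acts by lattice automorphisms and hence preserves $mr$.
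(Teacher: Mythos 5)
Your proof is correct and follows essentially the same route as the paper: the paper's own proof of Corollary \ref{cor51} consists precisely of noting that ${\rm NS}\,(X_{\varphi})$ is an even overlattice of ${\rm NS}\,(X)$ (via Theorem \ref{thm51}) and invoking Remark \ref{overlattice}(1). The additional remarks you make about translating between $mr(\cdot)$ for surfaces and for lattices are just an explicit spelling-out of what the paper delegates to Theorem \ref{thm22} and Remark \ref{overlattice}(2).
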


\begin{proof} Since ${\rm NS}\, (X_{\varphi})$ is an even over lattice of ${\rm NS}\, (X)$, the corollary follows from Remark \ref{overlattice} (1).    
\end{proof}

We are now ready to prove Theorem \ref{main15}.

\medskip
\noindent
{\it Proof of Theorem \ref{main15}.}
It suffices to show that $jmr(S) \ge mr(S)$. Set $X := S$ and let $\varphi : X \to \P^1$ be a genus one fibration with fiber class $f \in {\rm NS}\, (X)$ and let $j_{\varphi} : X_{\varphi} \to \P^1$ be the associated Jacobian fibration. We use the same notation as in Theorem \ref{thm51} and after that. We set $\overline{M'} := (\Z \langle f/l, v \rangle)_{{\rm NS}\, (X_{\varphi})}^{\perp}$. Then ${\rm NS}\, (X_{\varphi}) = U \oplus \overline{M'}$.

By the assumption, we have also an orthogonal decomposition ${\rm NS}\, (X) = U \oplus M$. Set $$\overline{M} := \iota(U)_{{\rm NS}\, (X_{\varphi})}^{\perp}$$ for this $U$ under the inclusion $\iota$ in equation \eqref{iota}. We have $\iota(M) \subset \overline{M}$ and $\overline{M'} \in \sG(\overline{M})$ by $U \oplus \overline{M'} = U \oplus \overline{M}$. By Proposition \ref{prop51}, there is then a sublattice $M' \subset \overline{M'}$ such that $M' \in \sG(M) = \sG(\iota(M))$. Then ${\rm NS}\, (X) = U \oplus M \simeq U \oplus M'$, and therefore, we have a Jacobian fibration $\varphi' : X \to \P^1$ such that $$M' \simeq \Z \langle f', s' \rangle_{{\rm NS}\, (X)}^{\perp},$$ where $f'$ is the fiber class of $\varphi'$ and $s'$ is a global section of $\varphi'$. Since $M' \subset \overline{M'}$, it follows that $M'_{rt} \subset (\overline{M'})_{rt}$. Hence, by Theorem \ref{thm51} and by the fact that $mr(\varphi) = mr(j_{\varphi})$ (see e.g. \cite[Proposition 4.3.2 and Definition 4.3.4]{CDL25}), it follows that 
$$mr(\varphi') = {\rm rank}\, M'  - {\rm rank}\, M'_{rt} \ge {\rm rank}\, \overline{M'}  - {\rm rank}\, \overline{M'}_{rt} = mr(j_{\varphi}) = mr(\varphi)$$
 by Remark \ref{overlattice} (1). Hence $jmr(S) \ge mr(S)$ as claimed.   

This completes the proof of Theorem \ref{main15}. 

\section{Proof of Theorem \ref{main}.}\label{sect5}

In this section, we prove Theorem \ref{main}. 

For a lattice $L$, we call a sublattice $M \subset L$ {\it primitive} if the quotient $\Z$-module $L/M$ is torsion free. The {\it primitive closure} $P$ of a sublattice $M$ in $L$ is the smallest primitive sublattice of $L$ containing $M$.

The following lemma, which is a modification of the third author's result \cite[Theorem 4.1]{Yu25}, is extremely useful in the proof of Theorem \ref{main}.  

\begin{lemma}\label{lem:subtest}
Let $L$ be an even isotropic lattice of rank at least $3$. Let $L_1\subset L$ be an isotropic primitive sublattice with ${\rm rank} (L_1)={\rm rank}(L)-1$. If $L$ and $L'$ are hyperbolic and
$$|\frac{{\rm det}(L)}{{\rm det}(L_1)}|>2,$$ 
then $$mr(L)\ge mr(L_1)+1.$$
\end{lemma}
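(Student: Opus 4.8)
The plan is to produce, from a primitive isotropic vector of $L_1$ witnessing $mr(L_1)$, a primitive isotropic vector of $L$ witnessing $mr(L)\ge mr(L_1)+1$; the determinant hypothesis will be exactly what prevents new root directions from appearing when one passes from $L_1$ to $L$.

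First, choose a primitive isotropic $x\in L_1$ with $mr(L_1)=mr(L_1,x)$. Since $L_1$ is primitive in $L$, the vector $x$ is also primitive in $L$. Set $N:=(\Z x)^{\perp}_{L}/\Z x$ and $N_1:=(\Z x)^{\perp}_{L_1}/\Z x$; both are negative definite, and because $(\Z x)^{\perp}_{L_1}=L_1\cap (\Z x)^{\perp}_{L}$ is primitive in $(\Z x)^{\perp}_{L}$ (as $L_1$ is primitive in $L$), the induced map $N_1\hookrightarrow N$ is a primitive isometric embedding with ${\rm rank}\,N={\rm rank}\,N_1+1$. Since roots of $N_1$ are roots of $N$ we always have $(N_1)_{rt}\subseteq N_{rt}$, so the Lemma reduces to the equality ${\rm rank}\,N_{rt}={\rm rank}\,(N_1)_{rt}$: granting it,
$$mr(L)\ge mr(L,x)={\rm rank}\,N-{\rm rank}\,N_{rt}=({\rm rank}\,N_1+1)-{\rm rank}\,(N_1)_{rt}=mr(L_1,x)+1=mr(L_1)+1.$$
I would then translate the hypothesis to $N$ and $N_1$: from the elementary identity $|{\rm det}\,L|=l_L^{2}\,|{\rm det}\,N|$ (a one-line cofactor expansion of a Gram matrix of $L$ adapted to $(\Z x)^{\perp}_L\oplus\Z v$, where $(x,v)=l_L:=\gcd\{(x,z):z\in L\}$), its analogue for $L_1$, and the divisibility $l_L\mid l_{L_1}$, one gets $|{\rm det}\,N/{\rm det}\,N_1|\ge|{\rm det}\,L/{\rm det}\,L_1|>2$. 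Hence it suffices to prove the following purely lattice-theoretic assertion: if $N_1\subset N$ is a primitive corank-one sublattice of negative definite lattices with $|{\rm det}\,N/{\rm det}\,N_1|>2$, then ${\rm rank}\,N_{rt}={\rm rank}\,(N_1)_{rt}$.

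For this assertion, suppose it fails. Then some root $r$ of $N$ lies outside $(N_1)_{rt}\otimes\Q$, and by primitivity of $N_1$ in $N$ it even lies outside $N_1\otimes\Q$. Let $\Z u=(N_1)^{\perp}_{N}$ with $(u^2)=-k<0$, and $m:=[N:N_1\oplus\Z u]$, so that $|{\rm det}\,N|=k\,|{\rm det}\,N_1|/m^{2}$. Writing $r=n+tu$ with $n\in N_1\otimes\Q$ and $t\in\Q\setminus\{0\}$, negative definiteness of $N_1$ forces $t^{2}k=(n^2)+2\le2$. On the other hand $t=(r,u)/(u^2)$ lies in the subgroup $\{(y,u)/(u^2):y\in N\}=\frac{1}{s}\Z$ of $\Q$, and since the homomorphism $N/(N_1\oplus\Z u)\to\frac{1}{k}\Z/\Z$ induced by $y\mapsto(y,u)/(u^2)$ has image of order $s$, we get $s\mid m$, whence $|t|\ge1/s\ge1/m$. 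Combining, $k\le2/t^{2}\le2m^{2}$, so $|{\rm det}\,N|=k\,|{\rm det}\,N_1|/m^{2}\le2\,|{\rm det}\,N_1|$, contradicting $|{\rm det}\,N/{\rm det}\,N_1|>2$. This establishes the assertion, and with it the Lemma.

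The crux — and the part I expect to demand the most care — is this last lattice assertion: bounding how far the rank of the root sublattice can grow under a corank-one primitive extension, and checking that any growth is compensated by the determinant dropping by a factor at most $2$. Everything else (the reduction to $N$ and $N_1$, the determinant identity, and the divisibility $l_L\mid l_{L_1}$) is routine bookkeeping.
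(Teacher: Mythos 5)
Your proof is correct and is essentially the paper's argument: the paper shows directly that any root of $L$ orthogonal to $e$ decomposes as $\frac{x}{k}+\frac{mv}{k}$ with $v$ generating $L_1^{\perp}\subset L$, and that $-2\le -m^2\,|\det L/\det L_1|$ forces $m=0$ — which is exactly your estimate ($t^2k\le 2$ versus $|t|\ge 1/m$) run one level up, in $L$ itself rather than in the quotient $N=e^{\perp}_L/\Z e$. The extra bookkeeping your variant requires — the identity $|\det L|=l_L^2\,|\det N|$, the divisibility $l_L\mid l_{L_1}$, and the primitivity of $N_1$ in $N$ — is all correct, so the two arguments differ only cosmetically.
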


\begin{proof}
Let $e\in L_1$ be a primitive isotropic element with $${\rm rank}(\langle e\rangle^\perp_{L_1}/\langle e\rangle)-{\rm rank}((\langle e\rangle^\perp_{L_1}/\langle e\rangle)_{rt})=mr(L_1).$$
Let $v\in L$ such that $\langle v\rangle =L_1^\perp\subset L$. Set $(v^2)=-a$, where $a$ is a positive even integer. Then there exists $k\in \Z$ with $k>0$ such that for each $\alpha\in L$ there are $x\in L_1$ and $m\in \Z$ such that $$\alpha =(\frac{x}{k},\frac{m v}{k})\in L_1^*\oplus \Z\langle v\rangle^* \text{ and } |\frac{{\rm det}(L)}{{\rm det}(L_1)}|=\frac{a}{k^2}$$ (see the proof of \cite[Theorem 4.1]{Yu25}). Suppose $(\alpha, e)=0$ and $(\alpha^2)=-2$. Then $(x, e)=0$ and $(x^2)\leq 0$. Thus, $$-2=(\alpha^2)=\frac{(x^2)}{k^2}+\frac{m^2 (v^2)}{k^2}\leq \frac{m^2 (v^2)}{k^2}=-\frac{m^2 a}{k^2}=-m^2 |\frac{{\rm det}(L)}{{\rm det}(L_1)}|.$$
By $|\frac{{\rm det}(L)}{{\rm det}(L_1)}|>2$, we have $m=0$. Thus, $\alpha\in L_1$. Then $e$ is a primitive isotropic element in $L$ with $${\rm rank}(\langle e\rangle^\perp_{L}/\langle e\rangle)-{\rm rank}((\langle e\rangle^\perp_{L}/\langle e\rangle)_{rt})=mr(L_1)+1.$$
Hence, $mr(L)\ge mr(L_1)+1$.
\end{proof}

Let $\mathcal{P}$  be the set of prime integers and $$\mathcal{P}_1:=\{p\in \mathcal{P} | \,\,\, p | {\rm det}(L) \text{ for some even hyperbolic lattice }L \text{ s.t. } {\rm rank}(L)\ge 5, \, mr(L)=0\}.$$

\begin{lemma}\label{finitePrime}
The set $\mathcal{P}_1$ is finite.
\end{lemma}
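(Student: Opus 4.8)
The plan is to reduce to a lattice containing $U$ via the Jacobian construction, exploit the resulting root–overlattice structure, and bound the ranks of the $A$-type summands by feeding well-chosen corank-one sublattices into Lemma \ref{lem:subtest}.

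So fix an even hyperbolic $L$ with ${\rm rank}(L)=r\ge 5$ and $mr(L)=0$; we must show the primes dividing $|{\rm det}(L)|$ lie in a finite set independent of $L$. Since $r\ge 5$, $L$ is isotropic, so we may pick a primitive isotropic $e\in L$ of minimal divisibility $l$ (the positive generator of $\{(e,x)\mid x\in L\}$). Exactly as in Theorem \ref{thm51}, $e/l\in L^{*}$ and the even overlattice $L_{J}:=\Z(e/l)+L\subset L^{*}$ contains $U=\langle e/l,v\rangle$ for $v\in L$ with $(e,v)=l$, hence splits as $L_{J}=U\oplus M$ with $M$ even negative definite; moreover $[L_{J}:L]=l$, so $|{\rm det}(L)|=l^{2}|{\rm det}(M)|$, and $mr(U\oplus M)=mr(L_{J})\le mr(L)=0$ by Remark \ref{overlattice}(1). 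Applying the defining property of $mr$ to the isotropic generator of $U$ forces the root sublattice $R:=M_{rt}$ to have full rank ${\rm rank}(M)=r-2$, so $M$ is an even overlattice of $R=\bigoplus_{j}R_{j}$ and $|{\rm det}(M)|$ divides $|{\rm det}(R)|=\prod_{j}|{\rm det}(R_{j})|$. As $|{\rm det}(A_{k})|=k+1$, $|{\rm det}(D_{k})|=4$ and $|{\rm det}(E_{6})|,|{\rm det}(E_{7})|,|{\rm det}(E_{8})|\in\{3,2,1\}$, the primes occurring in $|{\rm det}(M)|$ are among $2$, $3$, and the primes dividing $k+1$ over the $A_{k}$-summands of $R$; it therefore remains to bound those $A_{k}$, and separately to bound $l$.

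The crux — and the step I expect to be the main obstacle — is bounding the $A_{k}$-summands. I would use Lemma \ref{lem:subtest} on $L_{J}=U\oplus M$: a corank-one primitive sublattice of $L_{J}$ is $v^{\perp}$ for a primitive $v\in L_{J}^{*}=U\oplus M^{*}$, and a short computation gives $|{\rm det}(v^{\perp})|=|{\rm det}(L_{J})|\cdot|(v,v)|$. For $r\ge 6$ and $(v,v)<0$, $v^{\perp}$ has rank $r-1\ge 5$ hence is isotropic, so $mr(L_{J})=0$ and Lemma \ref{lem:subtest} force $|(v,v)|\ge 1/2$ for every such $v$; since a vector of $U$ realizes every even norm, this says precisely that $M^{*}$ contains no vector whose norm lies in $\bigcup_{t\ge 1}(-2t-\tfrac{1}{2},\,-2t)$. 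If $M_{rt}$ had an $A_{k}$-summand with $k$ large, I would produce such a forbidden vector inside $M^{*}$ — taking, e.g., the weight-lattice vector $\omega_{1}-\omega_{k-1}\in A_{k}^{*}$, of norm $-3+\tfrac{9}{k+1}$, which lies in $(-\tfrac{5}{2},-2)$ once $k+1\in(9,18)$, and adjusting the choice for other ranges of $k$ — thereby contradicting $mr(L_{J})=0$ and forcing $k\le N_{0}$ for a universal $N_{0}$. The delicate point is that $\omega_{1}-\omega_{k-1}$ a priori only lies in $R^{*}$, not necessarily in $M^{*}$: one must check that the gluing producing $M$ from $R$ does not obstruct this, which one does by analysing the isotropic glue subgroups of $A_{R}$ that meet the $\Z/(k+1)$-factor of $A_{A_k}$ — noting that a prime factor $p\mid k+1$ carried by the glue forces $p\mid|{\rm det}(R')|$ for the complementary summand $R'$, so one may argue by descent on the number of summands (or, more expensively, invoke the finiteness of $2$-reflective hyperbolic lattices through Remark \ref{overlattice}(2), which yields finitely many possible $M$ outright). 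Once $k\le N_{0}$, the primes dividing $|{\rm det}(M)|$ all lie in $\{p\le N_{0}+1\}$, no matter how many summands $R$ has. The index $l$ is controlled by the same circle of ideas — e.g. by showing an even hyperbolic lattice of rank $\ge 5$ with $mr=0$ already contains $U$, so that $l=1$ may be arranged; failing that, one bounds $l$ by applying Lemma \ref{lem:subtest} to $L$ itself through $v^{\perp}$ for short primitive $v\in L^{*}$, using that $e/l$ is an isotropic element of order $l$ in $A_{L}$. Finally, the rank-$5$ lattices need no extra work, since there ${\rm rank}(M)=3$ and the $A$-summands are among $A_{1},A_{2},A_{3}$. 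Collecting the bounds, the primes dividing $|{\rm det}(L)|$ lie in a fixed finite set, so $\mathcal{P}_{1}$ is finite.
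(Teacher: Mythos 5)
The paper's proof of this lemma is a one-line citation: Nikulin's classification \cite[Theorem 0.2.6]{Ni81} says that an even hyperbolic lattice $L$ with ${\rm rank}(L)\ge 5$ and $mr(L)=0$ has $|{\rm det}(L)|$ lying in a finite list, except in rank $5$ where the remaining determinants are of the form $2^m3^n$; finiteness of $\mathcal{P}_1$ is immediate. You instead attempt to reprove the needed consequence of Nikulin's theorem from scratch, and your argument has genuine gaps at exactly the steps you flag as the crux.

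The main gap is the bound on the $A_k$-summands. Your mechanism is sound in outline: for $L_J=U\oplus M$ with $mr(L_J)=0$ and rank $\ge 6$, Lemma \ref{lem:subtest} applied to $v^\perp$ for primitive $v\in L_J^*$ of negative norm does force $|(v,v)|\ge 1/2$, hence $M^*$ contains no vector of norm in $(-2t-\tfrac12,-2t)$. But you only exhibit such a forbidden vector for $9<k+1<18$ (via $\omega_1-\omega_{k-1}$, norm $-3+\tfrac{9}{k+1}$), and "adjusting the choice for other ranges of $k$" is not a construction. The norms available in $A_k^*$ modulo $2\Z$ are $-j+\tfrac{j^2}{k+1}$, and whether some translate lands within $\tfrac12$ below an even integer is an arithmetic condition on $k+1$ (one needs $j^2<k+1<2j^2$ for odd $j$, or analogous windows for other residues), and these windows do not obviously cover all large $k$ — e.g.\ $k+1=24,25$ are not reached by the evident candidates. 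On top of this, you correctly note that your candidate vectors live in $R^*$ but must be shown to lie in $M^*=(M/R)^\perp$ inside $R^*$, and the proposed "descent on the number of summands" is not carried out. Your parenthetical fallback — invoking the finiteness of $2$-reflective hyperbolic lattices — is essentially the paper's citation, and if you allow yourself that, the entire surrounding argument becomes unnecessary.

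A second gap: since $|{\rm det}(L)|=l^2|{\rm det}(M)|$, you must also bound the primes dividing the index $l=[L_J:L]$, and both routes you offer are unproven assertions (that an even hyperbolic lattice of rank $\ge 5$ with $mr=0$ contains $U$ is not automatic — it requires $l(L)\le {\rm rank}(L)-2$ via \cite[Corollary 1.13.5]{Ni80} — and the alternative via Lemma \ref{lem:subtest} applied to $L$ itself is not worked out). This also leaves the rank-$5$ case incomplete: there you control ${\rm det}(M)$ but not $l$. The clean fix is simply to quote Nikulin as the paper does.
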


\begin{proof}
If $mr(L) = 0$, then as ${\rm rank}\, L \ge 5$, such $|{\rm det}\, L|$ are finitely many, except $2^m3^n$ for some nonnegative integers $m$, $n$ if ${\rm rank}(L) = 5$, by Nikulin \cite[Theorem 0.2.6]{Ni81}. 
\end{proof}

\medskip
\noindent
{\it Proof of Theorem \ref{main} (1).} Let $S$ be a K3 surface with $r:= \rho(S)-2 \ge 4$ and $$L := {\rm NS}\, (S) = U \oplus M.$$ Then $S$ admits a Jacobian fibration and $jmr(S) = mr(S)$ by Theorem \ref{main15}. Hence, by Nikulin \cite[Theorem 10.2.1]{Ni81}, $mr(S) = 0$ (i.e., $jmr(S) = 0$) if and only if ${\rm Aut}\, (S)$ is finite. The isomorphism classes of such ${\rm NS}\, (S)$ are also finite (\cite[Theorem 0.2.2]{Ni81}). So, we may and will assume that $mr(S) = 1$. Our goal is to show that the isomorphism classes of $L = U \oplus M$ are finite under additional assumption that $mr(S) = 1$.

\medskip

Since $jmr(S) = mr(S) =1$, it follows that $S$ has a Jacobian fibration $\varphi : S \to \P^1$ such that $\langle f, s \rangle_{{\rm NS}\, (S)}^{\perp} = M$ and $mr(\varphi) = 1$. Here $f$ is the fiber class and $s$ is the class of global section of $\varphi$.

Then $M$ is a negative definite even lattice of rank $r$ and thus $M_{rt}$ is a negative definite even lattice of rank $r-1$ by the Shioda-Tate formula 
(\ref{STformula}). Since $M_{rt}$ is a direct sum of $ADE$ lattices with bounded rank ($r\le 18 = 20 -2$), there are only finitely many isomorphism classes of such lattices $M_{rt}$. 

\medskip

Thus it suffices to show that there are only finitely many isomorphism classes of $M$ for each fixed $M_{rt}$. Let $P$ denote the primitive closure of $M_{rt}$ in $L$, which is the same as the primitive closure of $M_{rt}$ in $M$ as $M$ is primitive in $L$. Fix a $\Z$-basis $\langle v_i \rangle_{i=1}^{r-1}$ of the $P$ and the Gram matrix representation of $P =(a_{ij})_{1 \le i, j \le r-1}$. Then we extend them to a $\Z$-basis $\langle v_i \rangle_{i=1}^{r}$ of $M$ with the Gram matrix $(a_{ij})_{1 \le i, j \le r}$. 

Let $\langle x, y\rangle$ be a $\Z$-basis of $U$ such that $(x^2) = (y^2) = 0$ and $(x, y) = 1$. For a positive integer $p$, we define the sublattice $L_p$ of $L = U \oplus M$ of rank $r+1 = {\rm rank}\, L -1$ by
$$L_p := \Z \langle x, py -v_r, v_1, \cdots, v_{r-1}\rangle.$$
Then, by $(x^2) = (x, v_i) = 0$, $(x, py-v_r) = p$, we find that $L_p \subset L$ is primitive, isotropic and hyperbolic, and by the shape of the Gram matrices of $L_p$ and $P$, we have
$$|{\rm det}\, L_p| = p^2|{\rm det}\, P|,$$
whereas
$$|{\rm det}\, L| = |{\rm det}\, M|.$$
By Lemma \ref{finitePrime}, $mr(L_p) \ge 1$ for a sufficiently large prime integer $p$ regardless of $M$. We choose and fix such a $p$\footnote{By the explicit list of the  N\'eron-Severi lattices of K3 surfaces with finite automorphism group in \cite[Introduction]{Ni81} (see also \cite[Section 16, Table]{Ro22}), one can choose $p\le 11$ regardless of $r$. We don't need this fact in the proof of Theorem \ref{main}, but such an explicit choice of $p$ is crucial to obtain our explicit classification in Theorem \ref{main18}.}. Then, since $mr(L_p) = 1$ and $mr(L) =1$, by the contraposition of Lemma \ref{lem:subtest}, 
we have 
$$|{\rm det}\, M| = |{\rm det}\, L| \le 2p^2 |{\rm det}\, P|\le 2p^2 |{\rm det}\, M_{rt}|.$$
Since $M$ is negative definite of bounded rank, it follows that the isomorphism classes of such $M$ are finite. Thus so are the isomorphism classes of $L$. 

This completes the proof of Theorem \ref{main} (1).

\medskip
\noindent
{\it Proof of Theorem \ref{main} (2).} Let $X$ be a K3 surface with $\rho (X) \ge 6$ and with $mr(X) \le 1$. As before by combining the results of Nikulin, we may and will assume that $mr(X) = 1$ and may show the isomorphism classes of such ${\rm NS}\, (X)$ are finite. Let $\varphi : X \to \P^1$ be a genus one fibration such that $mr(\varphi) =1$. 

\medskip

{\it From now, we use the same notation as in Theorem \ref{thm51}.} For instance $j_{\varphi} : X_{\varphi} \to \P^1$ is the associated Jacobian fibration, and $f$ is the fiber class of $\varphi$ and $l$ is the minimal positive integer of $(f,x)$ for $x \in {\rm NS}\, (X)$. As in Theorem \ref{thm51}, we choose $v \in {\rm NS}\, (X)$ such that $(f, v) = l$. By Theorem \ref{thm51} and Corollary \ref{cor51}, we have $\rho(X_{\varphi}) = \rho (X) \ge 6$ and $mr(X_{\varphi}) \le mr(X) = 1$. 

\medskip

We have the orthogonal decomposition ${\rm NS}\, (X_{\varphi}) = U \oplus M$, where $U = \Z\langle f/l,v \rangle$ and $$M := (\Z \langle f/l, v \rangle)_{{\rm NS}\, (X_{\varphi})}^{\perp}.$$ Because of $\rho(X_{\varphi}) \ge 6$ and $mr(X_{\varphi}) \le 1$, the isomorphism classes of such $M$ are finite by Theorem \ref{main} (1). Since $[{\rm NS}(X_{\varphi}):{\rm NS}(X)]=l$, it suffices to bound $l$ uniformly. 

Note that the number of possible values $d := |{\rm det}\, M|$ is also finite. Since $$(\Z f)_{{\rm NS}\, (X)}^{\perp}/ \Z f \simeq M$$ by Theorem \ref{thm51}, we have natural primitive embeddings 
$$M \subset (\Z f)_{{\rm NS}\, (X)}^{\perp} \subset {\rm NS}\, (X).$$ 
Then ${\rm NS}\, (X)$ is an overlattice of $M_{{\rm NS}\, (X)}^{\perp} \oplus M$ with 
$$\frac{|{\rm det}\, M_{{\rm NS}\, (X)}^{\perp}| \cdot |{\rm det}\, M|}{|{\rm det}\, {\rm NS}\,(X)|} = |{\rm NS}\, (X)/(M_{{\rm NS}\, (X)}^{\perp} \oplus M)|^2 \le d^2$$
as the gluing of these two sublattices is along a subgroup of $M^*/M$ with $|M^*/M| = d$ (for gluing of lattices, see \cite[Section 1, $5^\circ$]{Ni80}, \cite[Section 2]{Mc11a}, \cite[Section 4]{Mc16}, \cite[Section 4]{OY20}).  On the other hand, since ${\rm NS}\,(X_{\varphi}) = U \oplus M$ and $[{\rm NS}\,(X_{\varphi}):{\rm NS}\,(X)] = l$, 
we have
$$|{\rm det}\, {\rm NS}\,(X)| = l^2d.$$
From these two formulas, we obtain that 
$$|{\rm det}\, M_{{\rm NS}\, (X)}^{\perp}| \le l^2d^2.$$
Note also that $M_{{\rm NS}\, (X)}^{\perp} = \Z \langle f, sv+ u\rangle$ for some positive integer $s$ and an element $u \in M$.\footnote{Indeed, we may take $w=v$ as one of the referees pointed out to us.} Set $$w := sv + u.$$ Then $w$ is primitive inside $M_{{\rm NS}\, (X)}^{\perp}$ and ${\rm NS}\, (X)$. By $$(f^2) = 0, \, (f, v) = l,\,  (f, u) = 0,$$ it follows that $|{\rm det}\, M_{{\rm NS}\, (X)}^{\perp}| = (ls)^2$. Thus 
$$d^2 \ge [{\rm NS}\,(X) : M_{{\rm NS}\, (X)}^{\perp} \oplus M]^2 = \frac{(ls)^2d}{l^2d} = s^2.$$
Hence $d \ge s \ge 1$. Thus $s$ is also bounded as so is $d$. Set $L := {\rm NS}\, (X)$ and consider $$z := w -af$$ where $a \in \Z$. Then $M_{{\rm NS}\, (X)}^{\perp} = \Z \langle f, z \rangle$ with $(z^2) =  (w^2) - a(2ls)$. Thus, we may and will choose $a$, hence $z$, so that 
$$2ls \le b:= (z^2) \le 4ls -1.$$ 

\medskip

Let us show the boundedness of $l >0$. Let $$z_1:=f+q z=(1-a q)f+q w,$$ where $q$ is a positive integer. Note that $z_1$ is a primitive element in $L$ with $(z_1^2)=2qsl+q^2 b$. Consider the primitive closure $P_1$ of the sublattice 
$$L_1 := \Z z_1 \oplus M \subset L.$$
 Then $|{\rm det}\, L_1| = (2q sl +q^2 b)d$ and $|{\rm det}\, P_1|=\frac{1}{h^2}|{\rm det}\, L_1|$, where $h$ is a factor of $d$. Note that $P_1$ is a primitive sublattice of $L$ with ${\rm rank}\, P_1 = {\rm rank}\, L -1$. Moreover, $P_1$ is also isotropic as $P_1$ is hyperbolic with ${\rank}\, P_1 \ge 5$. 

By the finiteness of possible values of $d$ and Lemma \ref{finitePrime}, as before, we may assume that $mr(P_1) \ge 1$ for any prime integer $q$ such that $q\notin \mathcal{P}_1$ and $q \nmid d$. We choose one of such $q$.

 Then, since $mr(L) = 1$, by the contraposition of Lemma \ref{lem:subtest}, we have
$$\frac{l^2}{2q sl +q^2 b} = \frac{l^2d}{(2q sl +q^2 b)d} = \frac{|{\rm det}\, L|}{|{\rm det}\, L_1|}= \frac{|{\rm det}\, L|}{h^2 |{\rm det}\, P_1|} \le \frac{2}{h^2}\le 2.$$
Combining this with previous inequalities $l>0$, $b \le 4ls -1$ and $s \le d$, we obtain
$$l^2 \le 2(2q sl +q^2 b)< 2(2q sl +4q^2 ls) \le 2(2q dl +4q^2 ld),$$
which implies
$$l\le 2(2q d +4q^2 d)\le (4q  +8q^2 )d.$$

This completes the proof of Theorem \ref{main} (2). 

\medskip
\noindent
{\it Proof of Theorem \ref{main} (3).} Let $S$ be a K3 surface of $\rho (S) \ge 6$ with non-elementary hyperbolic ${\rm Aut}\, (S)$. By $\rho (S) \ge 6$, $S$ has at least one genus one fibration. Since ${\rm Aut}\, (S)$ is hyperbolic, we have $mr(S) \le 1$ by Theorem \ref{main11}. Thus ${\rm NS}\, (S)$ is an element of the set $\sH_0^{\ge 6}$, which is a finite set by Theorem \ref{main} (2). This completes the proof of Theorem \ref{main} (3).
 
\section{Explicit list of lattices in $\sH_{{\rm hyp}}^{\ge 6}$}\label{sect6}

In this section, we prove Theorem \ref{main18}.  The Picard number $\rho=\rho(S)$ of any K3 surface $S$ is at most $20$. So we only need to determine $$\sH_{{\rm hyp}}^{\rho}:=\{L|\, L\in\sH_{{\rm hyp}}^{\ge \rho}, \, {\rm rank}(L)=\rho \}/{\rm isomorphisms},$$ where $6\le \rho\le 20$. Note that for $L\in \sH_{{\rm hyp}}^{\rho}$, we have $mr(L)\le 1$ if and only if $mr(L)=1$ by Remark \ref{rem11} (3). Since ${\rm NS}(S)$ is a primitive sublattice of the unimodular rank $22$ lattice $U^{\oplus 3}\oplus E_8^{\oplus 2}$, it follows that $\rho(S)+l({\rm NS}(S))\le 22$. Thus, by Theorem \ref{main13}, it suffices to classify all lattices $L$ satisfying the following conditions:
\begin{enumerate}
\item $L$ is an even hyperbolic lattice of positive entropy;
\item $mr(L)=1$ (see Remark \ref{rem11} (2));
\item $6\le {\rm rk}(L)\le 20$ and ${\rm rk}(L)+l(L)\le 22$.
\end{enumerate}

\medskip

 We classify such lattices inductively from rank $6$ to $20$. For each rank $r$, we can obtain all lattices satisfying (1)-(3) in two steps: 
\begin{enumerate}
\item[(I)]:  classify rank $r$ lattices $L$ satisfying (1)-(3) of the form $L=U\oplus M$;

\item[(II)]: classify rank $r$ lattices $L$ satisfying (1)-(3) but not containing $U$.
\end{enumerate}

\medskip

For Step (I), since $mr(U\oplus M)=1$, replacing $M$ by another member in the genus of $M$ if necessary, we may assume that $M_{rt}$ has rank $r-3$. Thus, the primitive closure $M_1$ of $M_{rt}$ in $M$  is an even overlattice of a root lattice. Then we have a primitive extension $M_{1}\oplus M_1^{\perp} \subset M=M_{1}\oplus_\phi M_1^{\perp}$ for some gluing map $\phi: H_1\rightarrow H_2$ (see e.g. \cite[Section 4]{Mc16}), where $H_1\subset M_{1}^*/M_1$, $H_2\subset (M_{1}^{\perp})^*/M_1^{\perp}$, $H_1\cong H_2\cong \Z/k\Z$ for some positive integer $k$ dividing   $$c_{M_{1}}:= \text{the largest order of the elements in } M_{1}^*/M_1$$ by ${\rm rank}(M_1^{\perp})=1$. Note that $U\oplus M_1$ is a rank $r-1$ primitive sublattice of $U^{\oplus 3}\oplus E_8^{\oplus 2}$. 

Thus, in Step (I), it suffices to do the following steps: 
 \begin{enumerate}
\item[(i)] compute the (finite) set $$\mathcal{M}_1:=\{M_1 |\,\, M_1 \text{ an even overlattice of a root lattice of rank }r-3 \text{ and }r+l(M_1)\le 23 \};$$ 
\item[(ii)] For each $M_1 \in \mathcal{M}_1$, let $c_{M_1}$ denote the largest order of the elements in $M_{1}^*/M_1$. Define $p_{M_1}=p$ as follows: if $mr(U\oplus M_1)>0$, we set $p=1$; if $mr(U\oplus M_1)=0$ and $r=6$, we set $p=5$; if $mr(U\oplus M_1)=0$ and $r>6$, let $p$ be the smallest positive integer such that $|p^2{\rm det}(M_1)|$ is not in the (finite) set $$\mathcal{D}_{r-1}:=\{d|\, d=|{\rm det} (L)| \text{ for some even hyperbolic lattice $L$ of rank $r-1$ and }mr(L)=0 \}.$$
By Nikulin's classification \cite{Ni81} (see also \cite[Section 16]{Ro22}), we have $\mathcal{D}_{6}=\{ 4, 5, 8, 9, 12, 16, 64, 81\}$, $\mathcal{D}_{7}=\{ 4, 6, 8, 10, 12,$ $16, 18, 32, 128\}$, $\mathcal{D}_{8}=\{ 3, 4, 7, 8, 12, 15,$ $16, 27, 64, 256\}$, $\mathcal{D}_{9}=\{ 2, 4, 6, 8, 12, 16, 32, 128, 512\}$, $\mathcal{D}_{10}=\{ 1, 4, 9, 16, 64, 256\}$, $\mathcal{D}_{11}=\{ 2, 8, 32, 128\}$, $\mathcal{D}_{12}=\{ 3, 4, 16, 64\}$, $\mathcal{D}_{13}=\{ 4, 8, 32\}$, $\mathcal{D}_{14}=\{ 4, 16\}$, $\mathcal{D}_{15}=\{ 8\}$, $\mathcal{D}_{16}=\{ 4\}$, $\mathcal{D}_{17}=\{ 2\}$, $\mathcal{D}_{18}=\{ 1\}$, $\mathcal{D}_{19}=\{ 2\}$.

\item[(iii)] Compute the (finite) set $\mathcal{E}_{M_1}$ of genera of primitive extensions $M$ of $M_1\oplus \langle -a\rangle$ such that $M$ is even and $$a\le 2p_{M_1}^2 k^2\le 2p_{M_1}^2 c_{M_1}^2,\,\, r+l(M)\le 22,\,\, mr(U\oplus M)=1,$$ where $k:=[M:M_1\oplus \langle -a\rangle].$ In fact, by Lemma \ref{lem:subtest}, the first inequality is a necessary condition for $mr(U\oplus M)=1$ since $M$ contains a primitive corank one sublattice of determinant $p^2\cdot |{\rm det}(M_1)|$ (see the proof of Theorem \ref{main} (1)). We slightly adapt \cite[Section 4, Genus test]{Yu25} to compute members $M^\prime$ in the genus of $M$; then by computing the rank of the root lattices $M^{\prime}_{rt}$, we check whether $mr(U\oplus M)=1$ or not.
\end{enumerate}

\medskip

Let $$\mathcal{L}^r_U:=(\bigcup_{M_1\in\mathcal{M}_1}\{U\oplus M |\; M\in \mathcal{E}_{M_1}\})/{\rm isomorphisms}.$$ Then the subset $$\{ L| \, L\in \mathcal{L}^r_U \text{ of positive entropy}\}/{\rm isomorphisms}$$  of $\mathcal{L}^r_U$ consists of all possible rank $r$ lattices which satisfy (1)-(3) and contain $U$. By \cite[Theorem 1.14.2]{Ni80}, the genus of every lattice in $\mathcal{L}^r_U$ contains only one isomorphism class.

Then, in Step (II), it suffices to do the following: For each $L=U\oplus M \in \mathcal{L}^r_U$, we compute all possible rank $r$ sublattices $L^\prime \subset L$ such that the following statements hold

\begin{enumerate}
\item[(${\rm i}^\prime$)] $mr(L^\prime)=1$;
\item[(${\rm ii}^\prime$)] $L=\Z (f/l) + L^\prime$ with $(\Z f)_{L^\prime}^{\perp}/ \Z f \simeq M$ for some primitive isotropic element $f$ in $L^\prime$, $l>1$ (see Theorem \ref{thm51});
\item[(${\rm iii}^\prime$)] $L^\prime$ is of positive entropy and has no sublattice isometric to $U$.
\end{enumerate}

\medskip
 For any integer $r\ge 6$, we define 
$$\mathcal{P}_r:=\{p\in \mathcal{P}\, | \, p^2| {\rm det}(L) \text{ for some rank $r$ even hyperbolic lattice} L \text{ s.t }mr(L)\le 1,\, U\subset L\}.$$ 
\begin{claim}\label{claim}
Let $L=U\oplus M \in \mathcal{L}^r_U$ and let $L^\prime\subset L$ be a sublattice of rank $r\ge 6$ and $l=[L:L']>1$. Suppose $L'$ satisfies the conditions (${\rm i}^\prime$)-(${\rm ii}^\prime$). Then:
\begin{enumerate}
\item If $p$ is a prime integer dividing $l$, then $p\in \mathcal{P}_r$.
\item If an integer $l_1>1$ divides $l$, then there is a rank $r$ sublattice $L_1\subset L$ satisfying $l_1=[L:L_1]$ and the conditions (${\rm i}^\prime$)-(${\rm ii}^\prime$).
\item If $L'$ satisfies the condition (${\rm iii}^\prime$), then $r-l(L)\le 4$.

\end{enumerate}
\end{claim}

\begin{proof}
Since $L'$ satisfies (${\rm ii}^\prime$), we have $p^2 | {\rm det}(L')=l^2 {\rm det}(L)$. By \cite[Lemma 3.7]{Yu25}, there is an even overlattice $L''$ of $L'$ such that $l(L'')\le 3$ and $p^2| {\rm det}(L'')$. Thus, $U\subset L''$ by \cite[Corollary 1.13.5]{Ni80} and $mr(L'')\le mr(L')=1$ by Remark \ref{overlattice} (1). Then $p\in \mathcal{P}_r$.

Let $k:=l/l_1$. By (${\rm ii}^\prime$), we have $L=\Z (f/l) + L^\prime$ with $(\Z f)_{L^\prime}^{\perp}/ \Z f \simeq M$ for some primitive isotropic element $f\in L^\prime$. In this case, $f_1:=f/k$ is a primitive isotropic element in $L_1:=\Z (f_1) + L^\prime\subset L$ such that $L=\Z (f_1/l_1) + L_1$ with $(\Z f_1)_{L_1}^{\perp}/ \Z f_1 \simeq M$. Thus $L_1$ satisfies (${\rm ii}^\prime$). Since $L_1$ is an even overlattice of $L'$, we have $mr(L_1)\le mr(L')=1$, which implies the assertion (2).

By (${\rm ii}^\prime$) again, we infer that $l(L^\prime)\le l(L)+2$. If $r-l(L)\ge 5$, then $r-l(L^\prime)\ge 3$, which implies  $U\subset L^\prime$ by \cite[Corollary 1.13.5]{Ni80}, a contradiction. Thus (3) holds.
\end{proof}

Now we explain how to compute all the lattices $L'\subset L\in \mathcal{L}^r_U$ satisfying (${\rm i}^\prime$)-(${\rm iii}^\prime$) in practice. For each prime $p\in \mathcal{P}_r$, we compute inductively the (finite) sets
$$\mathcal{L}_{(L,\, p^s)}':= \{ L'|\, L'\subset L\,  \text{ satisfies (${\rm i}^\prime$)-(${\rm ii}^\prime$) and the index }[L:L']=p^s\}/{\rm isomorphisms},\,s\ge 1.$$
By computing the discriminant group $(A_{L'}, q_{L'})$ and applying \cite[Theorem 1.14.2]{Ni80}, we conclude that for any lattice $L'$ in the sets $\mathcal{L}_{(L,\, p^s)}'$, its genus consists of a single isomorphism class.  By Claim \ref{claim} (2), if $\mathcal{L}_{(L,\, p^{s_0})}'$ is empty for some $s_0$, then $\mathcal{L}_{(L,\, p^s)}'$ is empty for all $s\ge s_0$. Such an $s_0$ exists by Theorem \ref{main} (2). If $L$ satisfies the property that $\mathcal{L}_{(L,\, p)}'$ is nonempty for at most one prime $p\in\mathcal{P}_r$, then by Claim \ref{claim} (1)-(2), we obtain all desired sublattices $L'\subset L$ by selecting, from the union $\cup_{1\le s<s_0}\mathcal{L}_{(L,\, p^s)}'$, precisely those lattices that satisfy (${\rm iii}^\prime$). It turns out that every lattice $L\in\mathcal{L}^r_U$ satisfies this property.

For computation in cases $6\le r\le 20$, we use McMullen's package \cite{Mc11b} and a mixture of Mathematica (\cite{Wo}), Magma (\cite{BCP}), PARI/GP (\cite{Th}), and SageMath (\cite{The}). All the computer algebra programs and the outputs for computing $\mathcal{L}^r_U$ and $L'$ satisfying (${\rm i}^\prime$)-(${\rm iii}^\prime$) can be found in the ancillary files to \cite{FOY25}. These files can be obtained at https://arxiv.org/src/2507.13726v1/anc. 

Next we explain how to get the result in Table \ref{tab:166} for two typical cases $r=6,9$. 

\subsection*{Case $r=6$} There are three rank $r-3=3$ root lattices $A_1^{\oplus 3}$, $A_2\oplus A_1$, $A_3$ and they do not have nontrivial even overlattices. Then we have $\mathcal{M}_1=\{A_1^{\oplus 3}, A_2\oplus A_1, A_3\}$. It follows that $(c_{M_1}, p_{M_1})=(2,5), (6, 5), (4,5)$ for $M_1=A_1^{\oplus 3}, A_2\oplus A_1, A_3$ respectively. By computing the required primitive extensions in Step (I)-(iii) for $M_1=A_1^{\oplus 3}$ with the help of computer, we find that $\mathcal{E}_{A_1^{\oplus 3}}$ consists of eleven genera represented by the following lattices $M$
$$
{\scriptsize
\left(
\begin{array}{cccc}
-2 & 0 & 0 & -1 \\
0 & -2 & 0 & -1 \\
0 & 0 & -2 & 1 \\
-1 & -1 & 1 & -4
\end{array}\right), \left(\begin{array}{cccc}
-2 & 0 & 0 & 0 \\
0 & -2 & 0 & -1 \\
0 & 0 & -2 & -1 \\
0 & -1 & -1 & -4
\end{array}\right), \left(\begin{array}{cccc}
-2 & 0 & 0 & 0 \\
0 & -2 & 0 & 0 \\
0 & 0 & -2 & 0 \\
0 & 0 & 0 & -4
\end{array}\right), \left(
\begin{array}{cccc}
-2 & 0 & 0 & -1 \\
0 & -2 & 0 & -1 \\
0 & 0 & -2 & 1 \\
-1 & -1 & 1 & -6
\end{array}\right),
}
$$

$$
{\scriptsize
\left(\begin{array}{cccc}
-2 & 0 & 0 & 0 \\
0 & -2 & 0 & 0 \\
0 & 0 & -2 & 1 \\
0 & 0 & 1 & -4
\end{array}\right), \left(\begin{array}{cccc}
-2 & 0 & 0 & 0 \\
0 & -2 & 0 & 0 \\
0 & 0 & -2 & 1 \\
0 & 0 & 1 & -6
\end{array}\right), \left(\begin{array}{cccc}
-2 & 0 & 0 & 0 \\
0 & -2 & 0 & 0 \\
0 & 0 & -2 & 0 \\
0 & 0 & 0 & -6
\end{array}\right), \left(\begin{array}{cccc}
-2 & 0 & 0 & 0 \\
0 & -2 & 0 & 0 \\
0 & 0 & -2 & 0 \\
0 & 0 & 0 & -8
\end{array}\right), 
}
$$

$$
{\scriptsize
\left(
\begin{array}{cccc}
-2 & 0 & 0 & -1 \\
0 & -2 & 0 & -1 \\
0 & 0 & -2 & 1 \\
-1 & -1 & 1 & -10
\end{array}\right), \left(\begin{array}{cccc}
-2 & 0 & 0 & 0 \\
0 & -2 & 0 & 0 \\
0 & 0 & -2 & 0 \\
0 & 0 & 0 & -10
\end{array}\right), \left(\begin{array}{cccc}
-2 & 0 & 0 & 0 \\
0 & -2 & 0 & 0 \\
0 & 0 & -2 & 0 \\
0 & 0 & 0 & -16
\end{array}\right).
}
$$
By taking direct sum $U\oplus M$ and comparing these lattices with those in \cite[Appendix]{Yu25}, we find that the first four genera $M$ correspond to four hyperbolic lattices $U\oplus M$ of zero entropy; the remaining genera $M$ correspond to seven hyperbolic lattices (No. 3, 9, 12-16 in Table \ref{tab:166}) of positive entropy. After doing similar calculations for $M_1=A_2\oplus A_1, A_3$, we find that $\mathcal{L}^6_U$ consists of $27$ lattices, and exactly $16$ lattices (No. 1-16 in Table \ref{tab:166}) of them are of positive entropy. Note that $ Det_{\mathcal{L}^6_U}:=\{|{\rm det}(L_1)|\,\, |\,\, L_1\in \mathcal{L}^6_U\}=\{ 12, 13, 16, 17, 20, 21, 24, 28, 29, 32, 36, 44, 45, 48, 64, 68, 80, 128\}.$

For each $L\in \mathcal{L}^6_U$, we compute the sublattices $L^\prime$ of $L$ such that the conditions (${\rm i}^\prime$)-(${\rm iii}^\prime$) hold. For example, let $L=U\oplus A_1^{\oplus 3}\oplus [-6]$ (No. 12 in Table \ref{tab:166}). Note that a prime number $p $ is in the finite set $\mathcal{P}_6$ if and only if $p^2$ divides the determinant of some lattice of rank $6$ in either $\mathcal{L}^6_U$ or the set $\mathcal{F}_U$ in \cite[Theorem 0.2.2]{Ni81}. From this, we have $\mathcal{P}_6=\{2, 3\}$. If $L^\prime\in\mathcal{L}_{(L,\, 3)}'$, then $|{\rm det}(L^{\prime})|=2^4\cdot 3^3$ and $L^{\prime}$ has an even overlattice $L^{\prime\prime}$ with $|{\rm det}(L^{\prime\prime})|=2^2 \cdot 3^3 $ by \cite[Lemma 3.7 (2)]{Yu25}. It follows from \cite[Corollary 1.13.5]{Ni80} and Remark \ref{overlattice} (1) that $L''$ contains $U$ and $mr(L'')\le mr(L')=1$, which contradicts to $2^2 \cdot 3^3 \notin \mathcal{D}_6 \cup Det_{\mathcal{L}^6_U}$. Thus, $\mathcal{L}_{(L,\, 3)}'$ is empty. The sublattices of $L=U\oplus A_1^{\oplus 3}\oplus [-6]$ satisfying the condition (${\rm ii}^\prime$) are isometric to lattices with a basis $(\alpha_1,\cdots ,\alpha_6)$ and Gram matrix of the form
$$
{\scriptsize
\left(\begin{array}{cccccc}
0 & l & 0 & 0 & 0 & 0 \\
l & a^\prime & b_1 & b_2 & b_3 & b_4 \\
0 & b_1 & -2 & 0 & 0 & 0 \\
0 & b_2 & 0 & -2 & 0 & 0 \\
0 & b_3 & 0 & 0 & -2 & 0 \\
0 & b_4 & 0 & 0 & 0 & -6
\end{array}\right)},$$
where $a^\prime\in\{0,2,\cdots,2l-2\}$, $b_i\in \{0,1,\cdots, l-1\}$ for all $i$. For $l=2$, such lattices form seven genera which are represented by the lattices $L'_i$ ($1\le i\le 7$) with $(a^\prime, b_1, b_2, b_3,b_4)$ given by 
$$(0,1,1,1,1), (0,0,1,1,0), (0,0,0,1,1), (0,0,0,1,0),(0,0,0,0,1),(2,1,1,1,1),(0,0,0,0,0).$$ 
Since $l(L'_i)=4$ ($1\le i\le 6$) and $L'_7=U(2)\oplus A_1^{\oplus 3}\oplus [-6]$, by \cite[Theorem 1.14.2]{Ni80}, each of the seven genera consists of a single isomorphism class. The first five lattices contain $U$ generated by the pairs $(\alpha_2,\alpha_3)$, $(\alpha_2,\alpha_4)$, $(\alpha_2,\alpha_5)$, $(\alpha_2,\alpha_5)$, $(\alpha_2,\alpha_6)$ respectively, and by randomly choosing $5$-tuples of elements in $L'_6$, we find that $L'_6$ contains a rank $5$ sublattice $L_1$ (generated by $\alpha_1+\alpha_6$, $\alpha_1-\alpha_2$, $\alpha_3$, $\alpha_4$, $\alpha_5$) of determinant $48$ and  $mr(L_1)>0$. Thus, $mr(L_i')>1$ ($1\le i\le 6$). By computation with the help of computer, $L_7'$ has exactly two nontrivial even overlattices $U\oplus A_1^{\oplus 3}\oplus [-6]$ (No. 12 in Table \ref{tab:166}) and $U\oplus A_1^{\oplus 2}\oplus A_2$ which is contained in the table in \cite[Theorem 0.2.2]{Ni81}. Since both of them are in $\sH_{U}^{6}$, we have that $mr(L_7')=1$. Clearly the lattice $L_7'$ is of positive entropy and contains no $U$. Thus, $L_7'$ is the only lattice in $\mathcal{L}_{(L,\, 2)}'$ and it satisfies (${\rm iii}^\prime$). Similarly, we find that $\mathcal{L}_{(L,\, 2^2)}'$ is empty. Hence,  by Claim \ref{claim} (1)-(2), all the sets $\mathcal{L}_{(L,\, 2^{s})}'$ ($s\ge 2$), $\mathcal{L}_{(L,\, 3^{s'})}'$ ($s'\ge 1$) are empty and $L_7'$ (No. 152 in Table \ref{tab:166}) is the only sublattice of $L$ satisfying all the conditions (${\rm i}^\prime$)-(${\rm iii}^\prime$).

As in the case $L=U\oplus A_1^{\oplus 3}\oplus [-6]$, we compute the desired $L^\prime $ for each $L\in \mathcal{L}^6_U$. It turns out that totally there are fourteen such $L^\prime$ (No. 146-159 in Table \ref{tab:166}). Thus, $\sH_{{\rm hyp}}^{6}$ consists of $16+14=30$ lattices.

\subsection*{Case $r=9$} Similar to Case $r=6$, we obtain that $\mathcal{L}^9_U$ consists of $37$ lattices. Moreover, exactly $25$ lattices (i.e., No. 60-84 in Table \ref{tab:166}) in $\mathcal{L}^9_U$ are of positive entropy. It turns out that there are exactly four lattices $L$ in $\mathcal{L}^9_U$ such that $9-l(L)\le 4$. The four lattices are as follows:
$$U(2)\oplus A_3\oplus D_4,\, U\oplus A_1^{\oplus 2}\oplus D_4\oplus [-4], \, U\oplus A_1^{\oplus 2}\oplus D_4\oplus [-6],\, U\oplus A_1^{\oplus 6}\oplus [-4].$$
Note that the first lattice is of zero entropy by \cite[Appendix]{Yu25} and the other three lattices are No. 82-84 in Table \ref{tab:166}. By similar calculations in Case $r=6$, none of them contains sublattices $L^\prime$ satisfying (${\rm i}^\prime$)-(${\rm iii}^\prime$). Thus, by Claim \ref{claim}, there is no lattice in $\sH_{{\rm hyp}}^{9}$ not containing $U$. Thus, $\sH_{{\rm hyp}}^{9}$ contains only $25$ lattices.

The other cases (i.e., $r =7,8, 10,11,\dots,20$) can be done similarly. Totally, there are exactly $166$ lattices, listed in Table \ref{tab:166}, satisfying the conditions (1)–(3), and they belong to $166$ distinct genera. All of them have primitive embeddings into the K3 lattice $U^{\oplus 3}\oplus E_8^{\oplus 2}$ by \cite[Theorem 1.12.2]{Ni80}. This completes the proof of Theorem \ref{main18}.

  \begin{table}[htp]
\caption{The $166$ lattices in $\sH_{{\rm hyp}}^{\ge 6}$ }\label{tab:166}
\begin{center}
{\scriptsize
\begin{tabular}{|c| p{3.8cm}|c|p{3.8cm}|c|p{3.8cm}|}
\hline 
 No & Lattice  & No & Lattice & No & Lattice \\
\hline 
$1$ & $U\oplus (A_3\oplus_4 [-84])$ & $57$ & $U\oplus A_1^{\oplus 2}\oplus A_3\oplus [-4]$ & $113$ & $U\oplus  D_4\oplus D_5$ \\ 
 \hline 
 $2$ & $U\oplus A_3\oplus [-6]$ & $58$ & $U\oplus A_1^{\oplus 5}\oplus [-4]$ &  $114$ & $U\oplus  A_1\oplus A_8$  \\ 
 \hline 
 $3$ & $U\oplus (A_3\oplus_2 [-28])$ & $59$ & $U\oplus A_1^{\oplus 5}\oplus [-6]$ &$115$ & $U\oplus  E_8\oplus [-24]$ \\ 
 \hline 
 $4$ & $U\oplus (A_3\oplus_4 [-116])$ & $60$ & $U\oplus E_6 \oplus [-4]$ & $116$ & $U\oplus  A_1\oplus A_2 \oplus D_6$ \\ 
 \hline 
 $5$ & $U\oplus A_1\oplus (A_2\oplus_3 [-48])$ & $61$ & $U\oplus (D_6\oplus_2 [-18])$ &  $117$ & $U\oplus  D_4^{\oplus 2}\oplus [-4]$ \\ 
 \hline 
 $6$ & $U\oplus A_3\oplus [-8]$ & $62$ & $U\oplus (E_6\oplus_3 [-60])$ & $118$ & $U\oplus  A_1^{\oplus 4} \oplus D_5$ \\ 
 \hline 
 $7$ & $U\oplus (A_3\oplus_2 [-36])$ & $63$ & $U\oplus A_1\oplus (D_5\oplus_4 [-44])$ &$119$ & $U\oplus   (D_9 \oplus_4 [-28])$  \\ 
 \hline 
 $8$ & $U\oplus (A_1\oplus A_2\oplus [-6])$ &  $64$ & $U\oplus D_6\oplus [-6]$ &$120$ & $U\oplus   A_{10}$  \\ 
 \hline 
 $9$ & $U\oplus A_1\oplus (A_2\oplus_3 [-66])$ & $65$ & $U\oplus (E_6\oplus_3 [-78])$ & $121$ & $U\oplus A_1\oplus  A_2 \oplus E_7$ \\ 
 \hline 
 $10$ & $U\oplus A_2\oplus (A_1\oplus_2 [-30])$ & $66$ & $U\oplus A_6\oplus [-4]$ & $122$ & $U\oplus A_1\oplus  E_8 \oplus [-8]$ \\ 
 \hline 
 $11$ & $U\oplus  A_3\oplus [-12])$ & $67$ & $U\oplus E_6\oplus [-10]$ & $123$ & $U\oplus A_1\oplus  A_9$\\ 
 \hline 
 $12$ & $U\oplus A_1^{\oplus 3}\oplus [-6]$ & $68$ & $U\oplus (A_6\oplus_7 [-224])$ & $124$ & $U\oplus A_1^{\oplus 3}\oplus  D_7$ \\ 
 \hline 
 $13$ & $U\oplus A_1^{\oplus 3}\oplus [-8]$ & $69$ & $U\oplus D_6\oplus [-8]$ & $125$ & $U\oplus A_4\oplus  E_7$ \\ 
 \hline 
 $14$ & $U\oplus (A_3\oplus_2 [-68])$ &  $70$ & $U\oplus A_1\oplus (A_5\oplus_3 [-24])$ & $126$ & $U\oplus A_2\oplus  D_9$ \\ 
 \hline 
 $15$ & $U\oplus A_1^{\oplus 3}\oplus [-10]$ & $71$ & $U\oplus A_1 \oplus D_5\oplus [-4]$ & $127$ & $U\oplus (D_{10}\oplus_2  [-14])$ \\ 
 \hline 
 $16$ & $U\oplus A_1^{\oplus 3}\oplus [-16]$ & $72$ & $U\oplus (A_6\oplus_7 [-252])$ & $128$ & $U\oplus A_1^{\oplus 2}\oplus  D_9$  \\ 
 \hline 
 $17$ & $U\oplus A_4\oplus [-4]$ & $73$ & $U\oplus ((A_1\oplus D_5)\oplus_2 [-18])$ &  $129$ & $U\oplus E_8\oplus  (A_2\oplus_3 [-48])$  \\ 
 \hline 
 $18$ & $U\oplus (A_4\oplus_5 [-120])$ & $74$ & $U\oplus (E_6\oplus_3 [-114])$ & $130$ & $U\oplus D_6\oplus  E_6$   \\ 
 \hline 
 $19$ & $U\oplus (A_4\oplus_5 [-130])$ & $75$ & $U\oplus A_1^{\oplus 3}\oplus A_4$ & $131$ & $U\oplus A_1^{\oplus 2}\oplus A_2\oplus  E_8$ \\ 
 \hline 
 $20$ & $U\oplus A_3\oplus (A_1\oplus_2 [-14])$ & $76$ & $U\oplus (A_6\oplus_7 [-308])$ & $132$ & $U\oplus A_{12}$  \\ 
 \hline 
  $21$ & $U\oplus A_4\oplus [-6]$ & $77$ & $U\oplus D_6\oplus [-12]$ & $133$ & $U\oplus D_5\oplus  D_7$ \\ 
 \hline 
 $22$ & $U\oplus D_4\oplus [-8]$ & $78$ & $U\oplus A_2\oplus D_4\oplus [-4]$ & $134$ & $U\oplus A_1\oplus A_4\oplus  E_8$   \\ 
 \hline 
 $23$ & $U\oplus A_1\oplus A_3\oplus [-4]$ & $79$ & $U\oplus E_6\oplus [-18]$ &  $135$ & $U\oplus D_7\oplus  E_6$   \\ 
 \hline 
 $24$ & $U\oplus (A_4\oplus_5 [-170])$ & $80$ & $U\oplus D_6\oplus [-14]$ & $136$ & $U\oplus A_6\oplus  E_8$ \\ 
 \hline 
 $25$ & $U\oplus (A_4\oplus_5 [-180])$ & $81$ & $U\oplus D_4 \oplus (A_2 \oplus_3 [-48])$ &  $137$ & $U\oplus A_1\oplus D_5\oplus E_8$  \\ 
 \hline 
 $26$ & $U\oplus A_1^{\oplus 2}\oplus (A_2\oplus_3 [-30])$ & $82$ & $U\oplus A_1^{\oplus 2}\oplus D_4\oplus [-4]$ & $138$ & $U\oplus A_1\oplus E_6\oplus E_8$  \\ 
 \hline 
 $27$ & $U\oplus D_4\oplus [-10]$ & $83$ & $U\oplus A_1^{\oplus 2}\oplus D_4\oplus [-6]$ & $139$ & $U\oplus A_7\oplus  E_8$ \\ 
 \hline 
 $28$ & $U\oplus A_1\oplus (A_3\oplus_4 [-84])$ & $84$ & $U\oplus A_1^{\oplus 6}\oplus [-4]$ & $140$ & $U\oplus (A_{14}\oplus_3  [-6])$  \\ 
 \hline 
 $29$ & $U\oplus A_1\oplus A_3\oplus [-6]$ & $85$ & $U\oplus A_2\oplus D_6$ & $141$ & $U\oplus E_{8}\oplus (E_7\oplus_2  [-10])$  \\ 
 \hline 
 $30$ & $U\oplus A_2^{\oplus 2}\oplus [-6]$ & $86$ & $U\oplus  (E_7\oplus_2 [-26])$ &  $142$ & $U\oplus A_1\oplus  D_7\oplus E_8$  \\ 
 \hline 
 $31$ & $U\oplus D_4\oplus [-14]$ & $87$ & $U\oplus  E_7\oplus [-8]$ & $143$ & $U\oplus D_9\oplus  E_8$ \\ 
 \hline 
 $32$ & $U\oplus A_1^{\oplus 2}\oplus (A_2\oplus_3 [-48])$ & $88$ & $U\oplus  (E_7\oplus_2 [-34])$ &  $144$ & $U\oplus A_2\oplus  E_8^{\oplus 2}$ \\ 
 \hline 
 $33$ & $U\oplus D_4\oplus [-16]$ & $89$ & $U\oplus  E_7\oplus [-10]$ & $145$ & $U\oplus A_1^{\oplus 2}\oplus  E_8^{\oplus 2}$ \\ 
 \hline 
 $34$ & $U\oplus A_1^{\oplus 4}\oplus [-4]$ & $90$ & $U\oplus  A_4\oplus D_4$ & $146$ & $U(2)\oplus A_3\oplus [-4]$ \\ 
 \hline 
 $35$ & $U\oplus A_1^{\oplus 4}\oplus [-6]$ & $91$ & $U\oplus  A_5\oplus A_3$ & $147$ & $U(2)\oplus (A_3\oplus_2 [-20])$    \\ 
 \hline 
 $36$ & $U\oplus A_1^{\oplus 4}\oplus [-8]$ & $92$ & $U\oplus  (D_7\oplus_4 [-100])$ & $148$ & $U(2)\oplus A_3\oplus [-8]$ \\ 
 \hline 
 $37$ & $U\oplus D_5\oplus [-4]$ & $93$ & $U\oplus  (E_7\oplus_2 [-50])$ & $149$ & $U(2)\oplus A_1^{\oplus 3}\oplus [-4]$ \\ 
 \hline 
 $38$ & $U\oplus (D_5\oplus_4 [-76])$ & $94$ & $U\oplus  E_7\oplus [-14]$ &  $150$ & $U(2)\oplus (A_3\oplus_2 [-36])$  \\ 
 \hline 
 $39$ & $U\oplus A_5\oplus [-4]$ & $95$ & $U\oplus  A_1^{\oplus 2}\oplus A_6$ & $151$ & $U(2)\oplus (A_1^{\oplus 3}\oplus_2 [-18])$ \\ 
 \hline 
 $40$ & $U\oplus (D_5\oplus_4 [-108])$ & $96$ & $U\oplus  D_7\oplus [-8]$ &  $152$ & $U(2)\oplus A_1^{\oplus 3}\oplus [-6]$ \\ 
 \hline 
 $41$ & $U\oplus (A_5\oplus_2 [-18])$ & $97$ & $U\oplus  A_1^{\oplus 3}\oplus D_5$ & $153$ & $U(2)\oplus A_1^{\oplus 3}\oplus [-8]$  \\ 
 \hline 
 $42$ & $U\oplus (D_5\oplus_2 [-28])$ & $98$ & $U\oplus  (E_7\oplus_2 [-66])$ & $154$ & $U(8)\oplus D_4$ \\ 
 \hline 
 $43$ & $U\oplus (A_5\oplus_3 [-42])$ & $99$ & $U\oplus  (A_7\oplus_4 [-72])$ & $155$ & $U(2)\oplus (A_3\oplus_2 [-68])$  \\ 
 \hline 
  $44$ & $U\oplus (A_5\oplus_6 [-186])$ & $100$ & $U\oplus  A_2\oplus (D_5\oplus_2 [-12])$ & $156$ & $([2]\oplus_2 [-10]) \oplus D_4(2)$ \\ 
 \hline 
 $45$ & $U\oplus A_1\oplus (A_4\oplus_5 [-80])$ & $101$ & $U\oplus  A_2\oplus D_5\oplus [-4]$ & $157$ & $U(3)\oplus A_1\oplus A_2\oplus [-6]$  \\ 
 \hline 
 $46$ & $U\oplus (D_5\oplus_2 [-36])$ & $102$ & $U\oplus  A_1^{\oplus 2}\oplus A_2\oplus D_4$ & $158$ & $U(3)\oplus A_2\oplus (A_1\oplus_2 [-30])$  \\ 
 \hline 
 $47$ & $U\oplus A_5\oplus [-6]$ & $103$ & $U\oplus  (D_7\oplus_2 [-52])$ & $159$ & $U(2)\oplus A_1^{\oplus 3}\oplus [-16]$ \\ 
 \hline 
 $48$ & $U\oplus (A_5\oplus_2 [-26])$ & $104$ & $U\oplus  A_3\oplus D_4\oplus [-4]$ & $160$ & $U(2)\oplus D_4\oplus [-4]$  \\ 
 \hline 
 $49$ & $U\oplus D_5\oplus [-10]$ & $105$ & $U\oplus  D_4\oplus (A_3\oplus_2 [-20])$ & $161$ & $U(2)\oplus D_4\oplus [-8]$ \\ 
 \hline 
 $50$ & $U\oplus A_1\oplus A_4 \oplus [-4]$ & $106$ & $U\oplus  A_1^{\oplus 5}\oplus A_3$ & $162$ & $U(2)\oplus D_4\oplus [-16]$  \\ 
 \hline 
 $51$ & $U\oplus D_4\oplus (A_1\oplus_2 [-22])$ & $107$ & $U\oplus  E_8\oplus [-8]$ & $163$ & $U(2)\oplus A_1^{\oplus 4}\oplus [-4]$ \\ 
 \hline 
 $52$ & $U\oplus A_1\oplus(A_4\oplus_5 [-120])$ & $108$ & $U\oplus  A_9$ & $164$ & $U(2)\oplus A_1^{\oplus 4}\oplus [-6]$  \\ 
 \hline 
 $53$ & $U\oplus A_1^{\oplus 2}\oplus(A_3\oplus_2 [-12])$ & $109$ & $U\oplus  A_3\oplus E_6$ & $165$ & $U(3)\oplus A_2^{\oplus 2}\oplus [-6]$ \\ 
 \hline 
 $54$ & $U\oplus A_1^{\oplus 4}\oplus A_2$ & $110$ & $U\oplus  E_8\oplus [-14]$ & $166$ & $U(2)\oplus A_1^{\oplus 5}\oplus [-4]$ \\ 
 \hline 
 $55$ & $U\oplus A_1\oplus (A_4\oplus_5 [-130])$ & $111$ & $U\oplus  E_8\oplus [-16]$ & & \\ 
 \hline 
 $56$ & $U\oplus D_4\oplus A_1 \oplus [-8]$ & $112$ & $U\oplus  A_1^{\oplus 2}\oplus D_7$ & & \\ 
 \hline

\end{tabular}
}
\end{center}
\medskip
{\scriptsize
In this table we use $L_1\oplus_k L_2$ to denote a primitive extension of $L_1\oplus L_2$ with $[L_1\oplus_k L_2: L_1\oplus L_2]=k$ and $L_1\oplus_k L_2$ being even. Such $L_1\oplus_k L_2$ here is unique up to isometries. $[n]$ is the rank $1$ lattice of determinant $n$.
}
\end{table}

\end{document}